\numberwithin{equation}{section}
\newtheorem{Thm}{Theorem}[section]
\newtheorem{Lem}[Thm]{Lemma}
\newtheorem{Prop}[Thm]{Proposition}
\newtheorem{Conj}[Thm]{Conjecture}
\newtheorem*{Thm*}{Theorem}
\newtheorem*{Conj*}{Conjecture}
\theoremstyle{definition}
\newtheorem*{Def*}{Definition}
\newtheorem{Eg}[Thm]{Example}
\newtheorem{Rem}[Thm]{Remark}
\newtheorem{Def}[Thm]{Definition}
\newcommand{\kk}{\Bbbk}
\newcommand{\Z}{\mathbb{Z}}
\newcommand{\N}{\mathbb{N}}
\newcommand{\opname}[1]{\operatorname{#1}}
\newcommand{\pr}{\opname{pr}}
\newcommand{\op}{^{op}}
\newcommand{\supp}{\opname{supp}}
\renewcommand{\deg}{\opname{deg}}
\newcommand{\Hf}{{\frac{1}{2}}}
\newcommand{\Rm}[1]{{\longmapsto}}
\newcommand{\Lm}[1]{{\longmapsfrom}}
\newcommand{\cA}{{\mathcal A}}
\newcommand{\cC}{{\mathcal C}}
\newcommand{\cF}{{\mathcal F}}
\newcommand{\bL}{{\mathbf L}}
\newcommand{\tB}{{\widetilde{B}}}
\newcommand{\can}{L}
\newcommand{\clAlg}{{\cA}}
\newcommand{\qClAlg}{\cA_q}
\newcommand{\diag}{{\delta}}
\tikzstyle{none}=[inner sep=0pt]
\tikzstyle{black box}=[draw=black, fill=black!25]
\tikzstyle{white box}=[draw=black, fill=white]
\tikzstyle{black circle}=[circle,draw=black!50, fill=black!25]
\tikzstyle{red circle}=[circle,draw=red!50, fill=red!25]
\tikzstyle{blue circle}=[circle,draw=blue!50, fill=blue!25]
\tikzstyle{green circle}=[circle,draw=green!50, fill=green!25]
\tikzstyle{yellow circle}=[circle,draw=yellow!50, fill=yellow!25]
\newcommand{\thistheoremname}{}
\newtheorem*{genericthm*}{\thistheoremname}
\newenvironment{namedthm*}[1]
  {\renewcommand{\thistheoremname}{#1}%
   \begin{genericthm*}}
  {\end{genericthm*}}
\renewcommand{\diag}{{d}}
\renewcommand{\can}{{\bL}}
\newcommand{\fv}{\opname{f}}
\newcommand{\ufv}{\opname{uf}}
\newcommand{\codeg}{\opname{codeg}}
\renewcommand{\qClAlg}{\clAlg_q}
\newcommand{\bQClAlg}{\overline{\clAlg}_q}
\newcommand{\Mc}{M^\circ}
\newcommand{\Nufv}{N_{\opname{uf}}}
\newcommand{\qUpClAlg}{\mathcal{U}_q}
\newcommand{\LP}{{\mathcal{LP}}}
\newcommand{\cRing}{\mathcal{P}}
\renewcommand{\diag}{d}
\newcommand{\mm}{{\mathbf{m}}}
\newcommand{\frg}{{\mathfrak{g}}}
\newcommand{\Inj}{\mathbf{I}}
\newcommand{\Proj}{\mathbf{P}}
\newcommand{\seq}{\overleftarrow{\mu}}
\newcommand{\seqnu}{\overleftarrow{\nu}}
\newcommand{\hLP}{\widehat{\mathcal{LP}}}
\newcommand{\tLP}{\widetilde{\mathcal{LP}}}
\newcommand{\envAlg}{\opname{U_q}}
\newcommand{\ow}{\overrightarrow{w}}
\newtheorem{Definition-Lemma}[Thm]{Definition-Lemma}
\newcommand{\qO}{{\opname{A_q}}}
\newcommand{\up}{{\opname{up}}}
\newcommand{\midAlg}{{\opname{A}}}
\newcommand{\dCan}{\opname{B}^{\up}}
\renewcommand{\op}{\mathrm{op}}
\newcommand{\coTrop}{\phi^{\op}}
\newcommand{\hmu}{\widehat{\mu}}
\begin{document}
\allowdisplaybreaks

\newcommand{\arXivNumber}{2004.12466}

\renewcommand{\thefootnote}{}

\renewcommand{\PaperNumber}{122}

\FirstPageHeading

\ShortArticleName{An Analog of Leclerc's Conjecture for Bases of Quantum Cluster Algebras}

\ArticleName{An Analog of Leclerc's Conjecture\\ for Bases of Quantum Cluster Algebras\footnote{This paper is a~contribution to the Special Issue on Cluster Algebras. The full collection is available at \href{https://www.emis.de/journals/SIGMA/cluster-algebras.html}{https://www.emis.de/journals/SIGMA/cluster-algebras.html}}}

\Author{Fan QIN}

\AuthorNameForHeading{F.~Qin}

\Address{School of Mathematical Sciences, Shanghai Jiao Tong University,\\ Shanghai 200240, People's Republic of China}
\Email{\href{mailto:fgin11@sjtu.edu.cn}{fgin11@sjtu.edu.cn}}
\URLaddress{\url{https://sites.google.com/site/qinfanmath/}}

\ArticleDates{Received May 14, 2020, in final form November 13, 2020; Published online November 27, 2020}

\Abstract{Dual canonical bases are expected to satisfy a certain (double) triangularity property by Leclerc's conjecture. We propose an analogous conjecture for common triangular bases of quantum cluster algebras. We show that a weaker form of the analogous conjecture is true. Our result applies to the dual canonical bases of quantum unipotent subgroups. It~also applies to the $t$-analogs of $q$-characters of simple modules of quantum affine algebras.}

\Keywords{dual canonical bases; cluster algebras; Leclerc's conjecture}

\Classification{13F60}

\renewcommand{\thefootnote}{\arabic{footnote}}
\setcounter{footnote}{0}

\section{Introduction}

\subsection{Background}

\textbf{Dual canonical bases and cluster theory.}
Let $\frg$ denote a Kac--Moody algebra with a~sym\-metrizable Cartan
datum, and $\envAlg=\envAlg(\frg)$ the corresponding quantized enveloping
algebra, where $q$ is not a root of unity. The negative (or~positive)
part $\envAlg^{-}$ of $\envAlg$ possesses the famous canonical bases
\cite{Kashiwara90,Kas:crystal,Lusztig90,Lusztig91}.
The corresponding dual basis $\dCan$ also has fascinating properties
and is related to the theory of total positivity \cite{Lusztig96}.

Fomin and Zelevinsky invented cluster algebras as a combinatorial
framework to understand the total positivity \cite{Lusztig96} and
the dual canonical bases $\dCan$. We refer the reader to the survey
\cite{Keller08Note} for further details of cluster algebras.

Let there be given any Weyl group element $w\in W$. Then the dual
canonical basis $\dCan$ of $\envAlg^{-}$ restricts to a basis $\dCan(w)=\dCan \cap \qO[N_{-}(w)]$
for the quantum unipotent subgroup $\qO[N_{-}(w)]$, see \cite{Kimura10}.
Notice that, if $\frg$ is a finite-dimensional semi-simple Lie algebra,
then $\qO[N(w_{0})]$ agrees with $\envAlg^{-}$, where $w_{0}$ denotes
the longest element in $W$.

Thanks to previous works (such as \cite{BerensteinFominZelevinsky05,BerensteinZelevinsky05, GeissLeclercSchroeer10, GeissLeclercSchroeer11,goodearl2020integral, GY13}), it is known that the quantum
unipotent subgroup $\qO[N_{-}(w)]$ is a (partially compactified)
quantum cluster algebra $\bQClAlg(t_{0})$, where the initial seed
$t_{0}=t_{0}(\ow)$ is constructed using a reduced word $\ow$ of~$w$. By Fomin and Zelevinsky~\cite{FominZelevinsky02}, the dual
canonical basis $\dCan(w)$ is expected to contain all quantum cluster
monomials, which was formulated as the quantization conjecture for
Kac--Moody cases in~\cite{Kimura10}. This conjecture has been verified
for acyclic cases by \cite{KimuraQin14} based on \cite{HernandezLeclerc09,
Nakajima09}, for symmetric semisimple cases and partially for
symmetric Kac--Moody cases by~\cite{qin2017triangular}, for all symmetric
Kac--Moody cases by \cite{Kang2018}, and recently, for all symmetrizable
Kac--Moody cases by \cite{qin2020bases}.

\textbf{Leclerc's conjecture.}
A basis element $b\in\dCan\subset\envAlg^{-}$ is said to be real
if $b^{2}\in q^{\Z}\dCan$. Leclerc proposed the following conjecture
regarding the multiplication by a real element of $\dCan$, which
is analogous to Kashiwara crystal graph operator.

\begin{Conj}[Leclerc's conjecture {\cite[Conjecture 1]{Leclerc03}}]\label{conj:leclerc}
Assume that $b_{1}$ is a real element of~$\dCan$. Then, for any
$b_{2}\in\dCan$ such that $b_{1}b_{2}\notin q^{\Z}\dCan$, the expansion
of their product on $\dCan$ takes the form
\begin{gather}
b_{1}b_{2} = q^{h}b'+q^{s}b''+\sum_{c\neq b',b''}\gamma_{b_{1},b_{2}}^{c}c,
\label{eq:Leclerc_conj}
\end{gather}
where $b'\neq b''$, $h<s\in\Z$, $\gamma_{b_{1},b_{2}}^{c}\in q^{h+1}\Z[q]\cap q^{s-1}\Z\big[q^{-1}\big]$.
\end{Conj}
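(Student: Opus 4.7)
The plan is to transport Leclerc's conjecture into the quantum cluster algebra framework. By the (now established) quantization conjecture, the restricted dual canonical basis $\dCan(w)$ of $\qO[N_{-}(w)]$ is identified with the common triangular basis of the partially compactified quantum cluster algebra $\bQClAlg(t_{0}(\ow))$ for any reduced word $\ow$ of $w$. Each such basis element is characterized by bar-invariance together with a unitriangular expansion in $q\Z[q]$-coefficients on a quantum PBW-type basis indexed by a dominance order on the weight lattice. This is the structural input one must exploit.

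First, rather than computing $b_{1}b_{2}$ directly in $\dCan$, expand both factors in a suitable PBW basis and multiply there. The multiplication of PBW monomials obeys Levendorskii--Soibelman-type quasi-commutation relations, and since $b_{1}$ is real its PBW expansion is itself tightly controlled. One should then show that $b_{1}b_{2}$ has a unique dominance-highest and a unique dominance-lowest PBW contribution, with every other PBW term strictly in between. Second, pull this picture back through the unitriangular change of basis from PBW to $\dCan$: the two extreme PBW monomials should produce the two expected basis elements $b'$ and $b''$, carrying scalars $q^{h}$ and $q^{s}$ respectively. Finally, bar-invariance of $b_{1}b_{2}$ (inherited from the bar-invariance of $b_{1}$ up to a $q$-power and of $b_{2}$) should symmetrize the $q$-shape of the product around the midpoint of $[h,s]$, forcing each non-extreme coefficient into $q^{h+1}\Z[q]\cap q^{s-1}\Z\big[q^{-1}\big]$, since a coefficient reaching degree $h$ or $s$ would violate either unitriangularity or the hypothesis $b_{1}b_{2}\notin q^{\Z}\dCan$.

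The main obstacle is the \emph{uniqueness} of the extremal terms $b'$ and $b''$ at the $\dCan$ level. At the PBW level, sharp top and bottom terms are accessible via quasi-commutation, but the unitriangular change of basis to $\dCan$ can in principle introduce cancellations, or promote several equally extreme $\dCan$-elements that are individually nonextremal in PBW coordinates. Ruling this out demands very precise information on the transition matrix, ordinarily drawn from Kashiwara--Lusztig crystal/perverse-sheaf technology or, in the approach of this paper, from the triangular basis machinery of \cite{qin2017triangular}. The weaker analog proved in the paper replaces the sharp $q^{h+1}/q^{s-1}$ bounds by bounds that constrain the support of the expansion without pinning down a unique top and bottom $\dCan$-element; closing exactly this gap is what separates the analog proved here from the full Leclerc conjecture.
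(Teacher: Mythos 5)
The statement you are addressing is Conjecture~\ref{conj:leclerc}, and the paper does not prove it: it is an open conjecture (proved independently, as the paper notes, by Kang--Kashiwara--Kim--Oh in the symmetric Kac--Moody case via quiver Hecke algebras, but not in this paper and not in the generality stated). What the paper proves is Theorem~\ref{thm:weaker_leclerc}, which weakens the \emph{hypothesis}, not the conclusion: it asserts that if $b_1\in\dCan(w)$ corresponds to a quantum cluster monomial after rescaling, then the full conclusion of Conjecture~\ref{conj:leclerc} holds, including the unique $b'$, $b''$ and the sharp bounds $\gamma^c_{b_1,b_2}\in q^{h+1}\Z[q]\cap q^{s-1}\Z\big[q^{-1}\big]$. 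Your closing paragraph mischaracterizes this by saying the paper ``replaces the sharp $q^{h+1}/q^{s-1}$ bounds by bounds that constrain the support\ldots without pinning down a unique top and bottom $\dCan$-element.'' That is backwards: the extremal terms and sharp bounds are exactly what the paper does establish (see Lemma~\ref{lem:leclerc_conj_one_seed}); what it cannot establish is the claim for an arbitrary real $b_1$, because not every real element is known to be a quantum cluster monomial (Conjecture~\ref{conj:d_can_basis_reachability}).

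On the method: your PBW-based route is not the paper's. The paper does not pass through a quasi-commutation analysis of PBW monomials. Instead it upgrades the triangular basis of \cite{qin2017triangular} to a \emph{double} triangular basis (Theorem~\ref{thm:common_tri_to_double}), using both degree and codegree triangularity. Then in Lemma~\ref{lem:leclerc_conj_one_seed}, the top term $S=b''$ is produced by degree $(\prec_t,\mm)$-unitriangularity and the bottom term $H=b'$ by codegree $(\succ_t,\mm)$-unitriangularity; the bar involution is used not to ``symmetrize around the midpoint of $[h,s]$'' but to exchange the two expansions, which directly forces every intermediate coefficient into $v^{h+1}\Z[v]\cap v^{s-1}\Z\big[v^{-1}\big]$. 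Your sketch correctly identifies the obstruction --- uniqueness of the extremal $\dCan$-elements after the PBW-to-canonical change of basis --- but it leaves this unresolved, and in the paper's framework this exact point is resolved by the two one-sided triangularity conditions. Finally, be aware that your sketch of the PBW step (``$b_1b_2$ has a unique dominance-highest and unique dominance-lowest PBW contribution'') is itself not established by quasi-commutation alone when $b_1$ is an arbitrary real element; that is precisely where the cluster-monomial hypothesis in Theorem~\ref{thm:weaker_leclerc} earns its keep, since for cluster monomials these extremal degrees are available from the cluster structure.
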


This conjecture was proved by \cite{Kang2018} for symmetric Kac--Moody
cases using quiver Hecke algebras \cite{KhovanovLauda08,KhovanovLauda08:III,Rouquier08}.

\textbf{Common triangular bases.}
For a large class of quantum (upper) cluster algebras (called injective-reachable,
see Section~\ref{subsec:Injective-reachability}), the author introduced
the triangular basis $\can^{t}$ for any chosen seed~$t$ in~\cite{qin2017triangular}.
The basis is characterized by a triangular property with respect to
the dominance order on the degrees of its basis elements (Section~\ref{subsec:Pointedness}), whence the name ``triangular''. It is
unique if it exists, and it can be constructed via Lusztig Lemma for
Kazhdan--Lusztig type bases \cite[Section~6.1]{qin2020bases}. Notice
that the triangular basis $\can^{t}$ depends on the seed~$t$.

In \cite[Definition 6.1.1]{qin2017triangular}, the author further
considered the common triangular basis $\can$, such that it gives
rise to the triangular bases $\can^{t}$ for all seeds $t$ and the
basis elements have well-behaved degrees under mutations (Sections~\ref{subsec:Tropical-transformations} and~\ref{subsec:Bases-with-different}).
The common triangular basis, if exists, contains all quantum cluster
monomials and verifies the Fock--Goncharov dual basis conjecture~\cite{FockGoncharov06a,FockGoncharov09}.
It~turns out the dual canonical bases of $\qO[N_{-}(w)]$ give rise
to the common triangular bases for the corresponding quantum cluster
algebras \cite{kashiwara2019laurent,qin2017triangular,qin2020bases}.
Also, the collections of the simple modules in monoidal categorification
of cluster algebras also provide examples of the common triangular
bases \cite{cautis2019cluster,Kang2018,kashiwara2019laurent,qin2017triangular}.
In this view, the common triangular bases suggest a~generalization
of the dual canonical bases in cluster theory, and their existence
also implies the possible existence of monoidal categorifications.

\subsection{Main results}

By \cite{kashiwara2019laurent,qin2017triangular,qin2020bases},
after localization and rescaling, the dual canonical basis $\dCan(w)$
agrees with the common triangular basis of the corresponding quantum
cluster algebra in the sense of \cite{qin2017triangular}. Correspondingly,
we formulate the following analog of Leclerc's conjecture.

\begin{Conj}[{Conjecture~\ref{conj:analog_leclerc}}]\label{conj:intro_analog_leclerc}
Conjecture $\ref{conj:leclerc}$ is true if we replace the dual canonical
basis by the common triangular basis.
\end{Conj}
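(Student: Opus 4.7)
The plan is to exploit the triangularity of the common triangular basis $\can$ together with bar-invariance to control the expansion $b_{1}b_{2}=\sum_{c}\gamma_{b_{1},b_{2}}^{c}c$, with the reality hypothesis on $b_{1}$ used to separate the two extremal terms.

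First I would fix a seed $t$ and work in the ambient quantum torus $\cT$ attached to $t$. By pointedness (see Section~\ref{subsec:Pointedness}), each $b\in\can$ is pointed at a unique extended $g$-vector $\deg^{t}(b)$: its Laurent expansion has leading monomial $X^{\deg^{t}(b)}$ with coefficient $1$, while all other monomials are strictly smaller in the dominance order. Since leading degrees add under multiplication in $\cT$, the product $b_{1}b_{2}$ has a unique leading Laurent monomial of degree $g_{1}+g_{2}$, where $g_{i}=\deg^{t}(b_{i})$, with coefficient a pure power $q^{s}$ determined by the quasi-commutation form of $\cT$. The triangular expansion in $\can$ therefore has top term $q^{s}b'$, where $b'$ is the unique basis element with $\deg^{t}(b')=g_{1}+g_{2}$ --- any other contributor must have strictly smaller leading degree and so cannot supply the top Laurent monomial.

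For the bottom term, I would invoke bar-invariance. Basis elements of $\can$ are bar-invariant, and after renormalizing $b_{1}b_{2}$ by a suitable half-integer power of $q$ coming from the quasi-commutation of $\cT$, the product becomes bar-invariant; together with bar-invariance of $\can$, this forces a palindromic symmetry on the Laurent coefficients $\gamma_{b_{1},b_{2}}^{c}$, so there exists a minimal exponent $h\leq s$ and a basis element $b''$ appearing with coefficient $q^{h}$. The hypothesis $b_{1}b_{2}\notin q^{\Z}\can$ together with the reality of $b_{1}$ should then guarantee $b'\neq b''$ and $h<s$: reality gives a particularly clean commutation structure between $b_{1}$ and $\can$, so that the two extremal terms collapse into a single basis element only when $b_{1}b_{2}$ is itself a pure $q$-monomial multiple of that element.

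The main obstacle I expect is the precise sandwich condition $\gamma_{b_{1},b_{2}}^{c}\in q^{h+1}\Z[q]\cap q^{s-1}\Z[q^{-1}]$ on the intermediate coefficients. The unitriangularity built into the triangular basis yields containment in $q^{s-1}\Z[q^{-1}]$, and palindromic bar-invariance produces the dual containment in $q^{h+1}\Z[q]$; intersecting the two requires the exponents $h$ and $s$ to be genuinely extremal and no further basis element to contribute at either boundary exponent. Establishing this simultaneously from both sides appears to be exactly where a proof in this framework runs into difficulty, and presumably where the paper's main theorem weakens the conclusion --- for instance by relaxing the intersection to a single containment $\gamma_{b_{1},b_{2}}^{c}\in q^{h+1}\Z[q^{\pm 1}]$, by replacing $b',b''$ by extremal subsums rather than single basis elements, or by restricting to situations in which $b_{1}$ and $b_{2}$ admit compatible degree data (for example, common leading or lowest degrees reachable by mutations).
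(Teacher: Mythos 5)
This statement is Conjecture~\ref{conj:analog_leclerc}, which the paper does \emph{not} prove: only the weaker Theorem~\ref{thm:weaker_analog_leclerc} (and hence Theorem~\ref{thm:weaker_leclerc}), which restricts $b_{1}$ to be a localized quantum cluster monomial, is established. So a ``proof'' of the full conjecture would be claiming more than the paper does. Your closing paragraph does anticipate that a weakening must occur, and your third guess (``restricting to situations in which $b_1$ admits compatible degree data reachable by mutations'') is the closest to what actually happens: the weaker theorem applies exactly when $b_{1}=\seq_{t,t'}^{*}X(t')^{m}$ is a cluster monomial for some seed $t'$, so that one can work in $\LP(t')$ where $b_{1}$ is a genuine cluster variable power.

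The genuine gap is in your mechanism for the bottom term. You claim that after renormalizing by a half-integer $q$-power the product $b_{1}b_{2}$ becomes bar-invariant and hence has palindromic coefficients. This is false in general. The bar involution on $\LP(t)$ is an anti-automorphism: $\overline{X^{m}\ast X^{m'}}=X^{m'}\ast X^{m}$, so $\overline{b_{1}\ast b_{2}}=b_{2}\ast b_{1}$, which differs from $b_{1}\ast b_{2}$ unless they quasi-commute. No $q$-power rescaling repairs this. Consequently there is no ``palindromic symmetry'' available, and the dual containment $\gamma^{c}_{b_1,b_2}\in q^{h+1}\Z[q]$ cannot be deduced from bar-invariance alone.

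What the paper actually does (Lemma~\ref{lem:leclerc_conj_one_seed} and its setup) is more structured. It first proves that a common triangular basis is a \emph{double} triangular basis (Theorem~\ref{thm:common_tri_to_double}): it is simultaneously degree-triangular for left multiplication $[X_{i}\ast\can_{\gamma}]^{t}$ and codegree-triangular for right multiplication $\{\can^{\eta}\ast X_{i}\}^{t}$. This orientation asymmetry is exactly what interacts well with the anti-automorphism property of the bar involution: starting from the degree-$(\prec_{t},\mm)$-unitriangular expansion of $v^{-s}X_{i}\ast\can_{\gamma}$ and applying bar (which flips to $v^{s}\can_{\gamma}\ast X_{i}$), one lands in the codegree picture, where codegree-triangularity then pins down the bottom basis element $H$ (with $\codeg^{t}H=f_{i}+\eta$) and forces all intermediate coefficients into $v^{h+1}\Z[v]$. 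Intersecting with the degree-side containment in $v^{s-1}\Z[v^{-1}]$ gives the sandwich. Because this entire argument is built on multiplication by a single cluster variable $X_{i}(t)$ (extended to cluster monomials by substitution, Lemma~\ref{lem:substitution}, and change of seed), it does not extend to an arbitrary real $R\in\can$; that is precisely why the full Conjecture~\ref{conj:analog_leclerc} remains open and is tied to the multiplicative reachability Conjecture~\ref{conj:d_can_basis_reachability}.
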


Recall that the quantum cluster monomials provide a subset of the
real elements in the dual canonical basis $\dCan(w)$ (we conjecture
that all real elements take this form, see Conjecture~\ref{conj:d_can_basis_reachability}).
Our first main result is the following weaker form of the analogous
conjecture.

\begin{Thm}[{Theorem~\ref{thm:weaker_analog_leclerc}}]\label{thm:intro_weaker_analog_leclerc}
Conjecture $\ref{conj:intro_analog_leclerc}$ is true for the real basis
elements corresponding to quantum cluster monomials.
\end{Thm}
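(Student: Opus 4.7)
The plan is to reduce the computation to a seed $t$ in which $b_1$ appears as a quantum Laurent monomial in the cluster variables of $t$, which is possible because $b_1$ is a quantum cluster monomial. In such a seed, one has $b_1 = q^{\alpha_1} z^{g_1}$ inside the quantum torus associated with $t$, where $g_1$ is the $g$-vector of $b_1$ at $t$ and $\alpha_1$ is an appropriate half-integer. Since $\can$ is the common triangular basis at every seed, both $b_1$ and $b_2$ are pointed with respect to the dominance order on degrees attached to $t$, and in particular $b_2$ admits a quantum-torus expansion $b_2 = z^{g_2} + \sum_{g' \prec g_2} c_{g'} z^{g'}$ with $c_{g'} \in q^{-1}\Z[q^{-1}]$ and unit leading coefficient.

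The product $b_1 b_2$ then becomes transparent in the quantum torus by term-wise quantum commutation with the single monomial $b_1$: the leading degree $g_1+g_2$ appears with the pure coefficient $q^h$, where $h = \alpha_1 + \lambda(g_1,g_2)/2$ is determined by the quasi-commutation, while each other contribution $q^{\alpha_1 + \lambda(g_1,g')/2} c_{g'} z^{g_1+g'}$ has degree strictly below $g_1+g_2$ and $q$-valuation controlled by that of $c_{g'}$. Iteratively re-expanding each $z^{g_1+g'}$ into $\can$ via pointedness yields $b_1 b_2 = q^h b' + \sum_{h'' \prec g_1+g_2} \gamma^{h''} b_{h''}$, with $b' = b_{g_1+g_2}$ the unique basis element of maximum degree and coefficient exactly $q^h$.

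For the minimum term $q^s b''$, I plan to apply the symmetric argument via the bar-conjugate expansion: $\overline{b_1 b_2} = b_2 b_1$ admits a parallel expansion in the same quantum torus with the same monomial support but $q$-powers shifted by the reversed quantum commutation. Since $b_1$ quasi-commutes with each single monomial by a pure power of $q$, comparing the two expansions pins the minimum-degree basis element to a pure coefficient $q^s$ and simultaneously traps the middle coefficients $\gamma^{h''}$ inside $q^{h+1}\Z[q] \cap q^{s-1}\Z[q^{-1}]$: the upper end is controlled by the $q^{-1}\Z[q^{-1}]$-bound on both the $c_{g'}$ and the triangular change-of-basis matrix $z^g \mapsto b_g$, and the lower end by the bar-conjugate counterpart. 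The main obstacle will be to establish this sharp double bound on all middle coefficients simultaneously; here the hypothesis that $b_1$ is a cluster monomial (rather than an arbitrary real element) is decisive, because only then are the quantum commutation shifts uniform enough to match the bar-conjugate bound, allowing a clean application of Lusztig's Lemma to the Kazhdan--Lusztig type construction underlying $\can$.
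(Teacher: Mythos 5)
Your opening reduction is correct and matches the paper: since $R=b_1$ is a localized quantum cluster monomial, there is a seed $t'$ at which it is literally a Laurent monomial in the quantum torus, and by Theorem~\ref{thm:common_tri_to_double} the basis $\seq_{t,t'}^*\can$ is still a (double) triangular basis at $t'$. After that the argument breaks down, for two related reasons.

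\textbf{The coefficient claim on the Laurent expansion of $b_2$ is false.} You assert that the $\prec_t$-pointed expansion $b_2=z^{g_2}+\sum_{g'\prec g_2}c_{g'}z^{g'}$ has $c_{g'}\in q^{-1}\Z[q^{-1}]$. Pointedness only normalizes the leading coefficient; it imposes no bound on the subdominant coefficients. Worse, bar-invariance of the triangular basis forces each $c_{g'}$ to be a \emph{bar-invariant} element of $\kk$, so $c_{g'}\in q^{-1}\Z[q^{-1}]$ would already force $c_{g'}=0$. (You are conflating the Laurent expansion of a basis element with the $(\prec_t,\mm)$-unitriangular \emph{basis} decomposition of a product $[X_i*\can_\gamma]^t$; the $\mm$-bound in Definition of a degree-triangular basis applies to the latter, not the former.) With this step gone, the ``iterative re-expansion'' and the claimed $q$-valuation control over middle terms have no support; the re-expansion of a single monomial $z^{g_1+g'}$ into $\can$ is an infinite series with unbounded coefficients, so nothing clean survives.

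\textbf{The codegree side is missing.} The paper's actual mechanism (Lemma~\ref{lem:leclerc_conj_one_seed}) is that $\can$ is not merely degree-triangular but \emph{bidegree}-triangular. One first records that $X_i*\can_\gamma$ is bipointed, with degree $f_i+\gamma$ (coefficient $v^s$) and codegree $f_i+\eta$ (coefficient $v^h$), and $h\le s$ because $\lambda(f_i,\tilde B n)\le 0$. The degree-triangular decomposition $v^{-s}X_i*\can_\gamma=S^{(0)}+\sum b^{(j)}S^{(j)}$ has $b^{(j)}\in\mm$. Applying the bar involution and renormalizing to $v^h\can_\gamma*X_i$ produces a copointed element, and the \emph{codegree}-triangularity property forces this decomposition to also be codegree $(\succ_t,\mm)$-unitriangular. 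It is precisely the simultaneous $\mm$-bound from both ends that yields $b^{(r)}=v^{h-s}$ for the minimal-codegree term and traps the middle coefficients in $v^{h+1}\Z[v]\cap v^{s-1}\Z[v^{-1}]$. Your bar-conjugate step gestures toward this but supplies no triangularity constraint on the codegree side, and your invocation of Lusztig's Lemma is a red herring: that lemma is used in the construction of $\can$, not in the proof of this theorem. To repair the argument you would need the double triangular basis machinery (Propositions~\ref{prop:triangular_to_double}, \ref{prop:common_tri_basis_opposite} and Theorem~\ref{thm:common_tri_to_double}) or an equivalent codegree control, which is the paper's main new ingredient.
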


Theorem~\ref{thm:intro_weaker_analog_leclerc} implies a triangularity
property for the $t$-analogs of $q$-characters of simple modu\-les
of quantum affine algebras, see Theorem~\ref{thm:analog-leclerc-quantum-aff}.
By \cite[Corollary 4.12]{kashiwara2020monoidal}, as a stronger version
of Theorem~\ref{thm:analog-leclerc-quantum-aff}, Conjecture~\ref{conj:intro_analog_leclerc}
holds true for the quantum cluster algebras in this case, see Remark~\ref{rem:categorical_leclerc}.

Our second main result follows as a consequence of Theorem~\ref{thm:intro_weaker_analog_leclerc}.

\begin{Thm}[{Theorem~\ref{thm:weaker_leclerc}}]
If we consider the dual canonical basis $\dCan(w)$ of the quantum
unipotent subgroup $\qO[N_{-}(w)]$, then Conjecture~$\ref{conj:leclerc}$
holds true for the real elements corresponding to quantum cluster
monomials.
\end{Thm}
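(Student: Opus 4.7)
The plan is to deduce this theorem as a corollary of Theorem~\ref{thm:intro_weaker_analog_leclerc} via the identification between the dual canonical basis of $\qO[N_{-}(w)]$ and the common triangular basis of the corresponding quantum cluster algebra. Recall from \cite{kashiwara2019laurent,qin2017triangular,qin2020bases} that, after localization at the frozen variables and rescaling each basis element by a power of $q^{1/2}$, the basis $\dCan(w)$ is identified with $\can$ of $\bQClAlg(t_{0}(\ow))$, and quantum cluster monomials correspond to quantum cluster monomials under this bijection. Fix integers $\alpha(m)$, indexed by the common parametrizing set, so that $b(m) = q^{\alpha(m)/2}\can(m)$ term by term.

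Given $b_{1},b_{2}\in\dCan(w)$ with $b_{1}$ a real quantum cluster monomial and $b_{1}b_{2}\notin q^{\Z}\dCan(w)$, the corresponding $\can(m_{1})$ is a quantum cluster monomial and $\can(m_{1})\can(m_{2})\notin q^{\Z}\can$. Theorem~\ref{thm:intro_weaker_analog_leclerc} applied to $\can(m_{1}),\can(m_{2})$ produces
\begin{gather*}
\can(m_{1})\can(m_{2}) = q^{h'}\can(m') + q^{s'}\can(m'') + \sum_{m\neq m',m''}\gamma^{m}\can(m)
\end{gather*}
with $h'<s'\in\Z$ and $\gamma^{m}\in q^{h'+1}\Z[q]\cap q^{s'-1}\Z\bigl[q^{-1}\bigr]$. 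Multiplying through by $q^{(\alpha(m_{1})+\alpha(m_{2}))/2}$ and substituting $\can(m) = q^{-\alpha(m)/2}b(m)$ translates this into an expansion of $b_{1}b_{2}$ on $\dCan(w)$, with each term receiving the half-integer shift $(\alpha(m_{1})+\alpha(m_{2})-\alpha(m))/2$. Localization is harmless because $\dCan(w)=\dCan\cap\qO[N_{-}(w)]$ and the product $b_{1}b_{2}$ already lies in $\qO[N_{-}(w)]$, so the resulting expansion stays inside $\dCan(w)$.

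The main obstacle is verifying that these half-integer shifts uniformly collapse to integer shifts while preserving the two extreme leading terms. This follows from bar-invariance: since $b_{1}$ is real (hence bar-invariant up to a power of $q$), $b_{2}$ is bar-invariant, and the bar involution is an antiautomorphism, the product $b_{1}b_{2}$ is bar-invariant up to an integer power of $q$; therefore its expansion on the bar-invariant basis $\dCan(w)$ has coefficients in $\Z\bigl[q^{\pm 1}\bigr]$. This forces the parity of $\alpha(m)+\alpha(m_{1})+\alpha(m_{2})$ to be constant on the support of the expansion, so the two extreme terms translate to $q^{h}b(m')$ and $q^{s}b(m'')$ with $h<s\in\Z$, together with the required integrality bounds $\gamma_{b_{1},b_{2}}^{c}\in q^{h+1}\Z[q]\cap q^{s-1}\Z\bigl[q^{-1}\bigr]$ on the middle coefficients. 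The compatibility of the rescaling exponents $\alpha(m)$ with the quasi-commutation data of $t_{0}(\ow)$ is exactly what the identifications of \cite{kashiwara2019laurent,qin2020bases} provide, so beyond invoking Theorem~\ref{thm:intro_weaker_analog_leclerc} the argument reduces to this bookkeeping.
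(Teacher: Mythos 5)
Your overall strategy matches the paper: reduce to Theorem~\ref{thm:weaker_analog_leclerc} via the identification (after localization and rescaling) of $\dCan(w)$ with the common triangular basis of the corresponding quantum cluster algebra. The difficulty you isolate — controlling the rescaling exponents $\alpha(m)$ when translating the expansion from $\can$ back to $\dCan(w)$ — is exactly the right thing to worry about. However, your resolution of it has a genuine gap.

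Your bar-invariance argument only forces the \emph{parity} of $\alpha(m)+\alpha(m_1)+\alpha(m_2)$ to be constant over the support of the expansion. That is not enough: the rescaling shifts $(\alpha(m_1)+\alpha(m_2)-\alpha(m))/2$ could still vary from term to term, and if they do, the inequalities in Conjecture~\ref{conj:leclerc} need not survive the substitution. For instance, if the codegree-extreme term $\can(m'')$ received a much smaller shift than some middle term $\can(m)$, after rescaling that middle term could have a coefficient in strictly lower powers of $q$ than $q^{h}b(m'')$, destroying the two-extremes structure and the bound $\gamma^{c}_{b_1,b_2}\in q^{h+1}\Z[q]\cap q^{s-1}\Z\bigl[q^{-1}\bigr]$. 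Parity constancy alone does not rule this out.

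What the paper actually uses is stronger and cleaner: the rescaling factor depends only on the natural root-lattice grading (weight) of $\envAlg$, and \emph{all} of $\seq^{*}_{t',t}X_i(t')*V$, $S$, $L^{(j)}$, $H$ in~\eqref{eq:weak_analog_leclerc} share the same root-lattice weight because the $Y$-variables have weight zero (degrees of any two terms in a pointed decomposition differ by an element of $\tB\cdot\Nufv^{\geq 0}$, i.e., a $Y$-monomial). Hence the rescaling shift is literally the \emph{same} integer for every term in the expansion, and the inequalities carry over verbatim. You should replace the bar-invariance/parity argument with this homogeneity observation; as written, your step asserting "so the two extreme terms translate to $q^{h}b(m')$ and $q^{s}b(m'')$ ... together with the required integrality bounds" is not justified.
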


In order to study the analog of Leclerc's conjecture and prove Theorem
\ref{thm:intro_weaker_analog_leclerc}, we will consider not only
triangularity with respect to degrees but also triangularity with
respect to codegrees. Correspondingly, we introduce the notion of
double triangular bases (Definition~\ref{def:double_triangular_basis}),
such that both the degrees and the codegrees of the basis elements
are well-behaved. In fact, the two terms $b'$, $b''$ in (the analog
of) Leclerc's conjecture are determined by the codegree and the degree
respectively, see Lemma~\ref{lem:leclerc_conj_one_seed}.

As the third main result, we verify that the common triangular basis
elements have well-behaved degrees and codegrees.

\begin{Thm}[{Theorem~\ref{thm:common_tri_to_double}}]
If the common triangular basis exists, then it gives rise to the double
triangular basis for any seed. Moreover, it is \emph{compatibly copointed}.
\end{Thm}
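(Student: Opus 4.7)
The plan is to derive the double triangularity and the compatible copointedness from the defining properties of the common triangular basis, using injective-reachability to convert statements about degrees at a ``shifted'' seed into statements about codegrees at the original seed.

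First, I would fix an arbitrary seed $t$. By the very definition of common triangular basis (Section \ref{subsec:Bases-with-different}), each $\can(w)$ is pointed at $t$ and the collection $\{\can(w)\}$ satisfies the triangular property with respect to the dominance order on degrees at $t$. This already supplies the ``degree'' half of the double triangular property in Definition \ref{def:double_triangular_basis} and confirms pointedness.

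Second, I would produce the codegree half. Under injective-reachability (Section \ref{subsec:Injective-reachability}) there is a distinguished ``shifted'' seed $t[1]$ reached from $t$ by an explicit mutation sequence. Applied at $t[1]$, the common triangular basis gives degrees $\deg^{t[1]}(\can(w))$ satisfying the triangular property at $t[1]$. The key identification I would establish is that the tropical cluster transformation $t[1]\rightsquigarrow t$ (Section \ref{subsec:Tropical-transformations}), applied to $\deg^{t[1]}(\can(w))$, yields precisely the codegree $\codeg^{t}(\can(w))$, up to the standard conventional shift associated with $t[1]$. Once this identification is made, the degree triangularity at $t[1]$, transported back to $t$, becomes triangularity of $\{\can(w)\}$ in codegrees at $t$ with respect to the opposite dominance order. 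Combined with the first step, this is exactly the double triangular property.

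Third, for the ``compatibly copointed'' conclusion, I would verify that as the seed varies the codegrees transform by the tropical rules that witness compatibility. Since the degrees of the common triangular basis transform compatibly across all seeds, a mutation $t\to t'$ lifts to a parallel relation between $t[1]$ and $t'[1]$, and the codegrees, being expressed through the degrees at $t[1]$ and $t'[1]$, inherit the compatibility from the degree side.

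The main obstacle will be the second step: identifying $\codeg^{t}(\can(w))$ with the image of $\deg^{t[1]}(\can(w))$ under the tropical transformation, and matching sign and order conventions so that triangularity with respect to the dominance order at $t[1]$ translates into triangularity with respect to the opposite dominance order on the codegree side at $t$. This amounts to a careful bookkeeping argument about the interaction between injective-reachability and Fock--Goncharov tropical duality, but the conventions must be tracked with some care to avoid sign or direction errors.
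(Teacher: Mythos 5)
Your overall architecture matches the paper's, and steps one and three capture the correct ideas, but there is a systematic directional error in your second step that also propagates into the third.

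The crux is that the swap property (Proposition~\ref{prop:swap_deg_codeg}) relates the \emph{codegree} of a Laurent polynomial at $t$ to the \emph{degree} of its image at the \emph{downward}-shifted seed $t[-1]$: explicitly, $Z$ is $\eta$-copointed in $\LP(t)$ if and only if $\seq_{t,t[-1]}^{*}Z$ is $\psi_{t[-1],t}\eta$-pointed in $\LP(t[-1])$. By the same token, the $[+1]$-shift sends degrees at $t$ to codegrees at $t[1]$, not the other way around. You consistently work with $t[1]$ and claim $\codeg^{t}(\can(w))$ is obtained from $\deg^{t[1]}(\can(w))$; this runs against the grain of the swap. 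Concretely, the piece of the codegree-triangularity argument that must succeed is: the product $V\ast X_{k}(t)$ maps under $\seq_{t,t[-1]}^{*}$ to $(\seq_{t,t[-1]}^{*}V)\ast I_{\sigma^{-1}k}(t[-1])$, a product with an $I$-variable of $t[-1]$, and one then invokes the degree-triangularity at $t[-1]$ plus Lemma~\ref{lem:substitution}. Had you mapped to $t[1]$, the image would be $(\seq_{t,t[1]}^{*}V)\ast P_{\sigma k}(t[1])$, a product with a $P$-variable of $t[1]$, which degree-triangularity at $t[1]$ does not directly control. A second, milder imprecision: the map that relates codegree at $t$ to degree at $t[-1]$ in Proposition~\ref{prop:swap_deg_codeg} is the \emph{linear} map $\psi_{t[-1],t}$, not the piecewise-linear tropical transformation $\phi$; for the common triangular basis (which is compatibly pointed) one has $\deg^{t[-1]}=\phi_{t[-1],t}\circ\deg^{t}$, and then $\codeg^{t}=\psi_{t[-1],t}^{-1}\circ\deg^{t[-1]}$, so both $\phi$ and $\psi^{-1}$ appear and must not be conflated.

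On the compatible-copointedness conclusion, your route is in substance the one the paper uses inside the proof of Proposition~\ref{prop:common_tri_basis_opposite} (namely Proposition~\ref{prop:compatibily_pointed_copointed}): compatibly pointed at $t[-1]$, $t'[-1]$ is equivalent to compatibly copointed at $t$, $t'$. Again, the input seeds are the $[-1]$-shifts, not the $[+1]$-shifts. The paper's stated proof of Theorem~\ref{thm:common_tri_to_double}(2) packages this via the anti-isomorphism $\iota\colon\LP(t)\simeq\LP(t^{\op})$, observing that $\iota\can$ is the common triangular basis of $\midAlg(t^{\op})$ and hence compatibly pointed, so $\can=\iota(\iota\can)$ is compatibly copointed; that is an equivalent but tidier way to organize the same shift-and-swap mechanism. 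Once you replace $t[1]$ by $t[-1]$ throughout (and distinguish $\psi$ from $\phi$), your argument would align with the paper's.
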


It is worth remarking that, if the cluster algebra is categorified
by a rigid monoidal category, then degrees and codegrees are related
to the two different ways of taking the dual objects in the category,
see \cite{kashiwara2019laurent}.

\subsection{Contents}

In Section~\ref{sec:Basics-of-cluster}, we briefly review basic notions
in cluster theory needed by this paper.

In Section~\ref{subsec:Pointedness}, we review notions and techniques
introduced and studied by \cite{qin2017triangular,qin2019bases}
such as dominance orders, (co)degrees and (co)pointed functions. In
Section~\ref{subsec:Tropical-transformations}, we define tropical
transformation for codegrees in analogous to that for degrees. In
Section~\ref{subsec:Injective-reachability}, we review the notion
of injective-reachability, and define the sets of distinguished functions
$\Inj^{t}$, $\Proj^{t}$ for seeds $t$, and we present some properties
of injective-reachability and these distinguished functions.

In Section~\ref{sec:Bidegrees-and-bases}, we define various bases
whose degrees or codegrees satisfy certain properties. In~particular,
we introduce the notion of double triangular bases. We discuss the
relation between double triangular bases and (common) triangular bases.
We prove that common triangular bases have good properties on their
codegrees (Theorem~\ref{thm:common_tri_to_double}).

In Section~\ref{sec:Main-results}, we propose an analog of Leclerc's
conjecture for common triangular bases (Conjecture~\ref{conj:analog_leclerc})
and show a weaker form holds true (Theorem~\ref{thm:weaker_analog_leclerc}).
We discuss its consequences for modules of quantum affine algebras
(Theorem~\ref{thm:analog-leclerc-quantum-aff}, Remark~\ref{rem:categorical_leclerc}).
We deduce that the weaker form is satisfied by the dual canonical
bases of $\qO[N_{-}(w)]$ (Theorem~\ref{thm:weaker_leclerc}).

\section{Basics of cluster algebras\label{sec:Basics-of-cluster}}

We briefly review notions in cluster theory necessary for this paper
following \cite{qin2017triangular,qin2019bases,qin2020bases}.
A~rea\-der unfamiliar with cluster theory is referred to
\cite{BerensteinZelevinsky05,Keller08Note} for background materials.

Denote $\kk=\Z\big[q^{\pm\Hf}\big]=\Z[v^{\pm}]$, where $v=q^{\Hf}$ is a
formal parameter. Define $\mm=v^{-1}\Z\big[v^{-1}\big]$. Notice that we have
a natural bar involution $\overline{(\ )}$ on $\kk$ which sends
$v$ to $v^{-1}$. Let $(\ )^{T}$ denote the matrix transposition
and $[\ ]_{+}$ denote the function $\max(0,\ )$.

\subsection{Seeds}

Fix a finite set of vertices $I$ and its partition $I=I_{\ufv}\sqcup I_{\fv}$
into the unfrozen and frozen vertices.

Let there be given a quantum seed \smash{$t=\big(\tB(t),\Lambda(t),(X_{i}(t))_{i\in I}\big)$},
where $X_{i}(t)$ are indeterminates, the integer matrices $\tB(t)=(b_{ij}(t))_{i\in I,j\in I_{\ufv}}$
and $\Lambda(t)=(\Lambda_{ij}(t))_{i,j\in I}$ form a compatible pair,
i.e. there exists some diagonal matrix $D=\mathrm{diag}(\diag_{k})_{k\in I_{\ufv}}$
with strictly positive integer dia\-go\-nals, such that $\tB(t)^{T}\Lambda(t)=\begin{pmatrix}
D & 0\end{pmatrix}$. $X_{i}(t)$ are called the $i$-th $X$-variables or quantum cluster
variables associated to $t$, $\tB(t)$ the $\tB$-matrix, $\Lambda(t)$
the $\Lambda$-matrix, and $B(t):=(b_{ij}(t))_{i,j\in I_{\ufv}}$
the principal part of $\tB(t)$ or the $B$-matrix.

\begin{Lem}[\cite{BerensteinZelevinsky05}]\quad
\begin{enumerate}\itemsep=0pt
\item[$(1)$] We have $\diag_{i}b_{ik}(t)=-\diag_{k}b_{ki}(t)$ for $i,k\in I_{\ufv}$.

\item[$(2)$] The matrix $\tB(t)$ is of full rank $|I_{\ufv}|$.
\end{enumerate}
\end{Lem}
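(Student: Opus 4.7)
The plan is to derive both parts from the compatibility relation $\tB(t)^{T}\Lambda(t)=\begin{pmatrix}D & 0\end{pmatrix}$, together with the (implicit) fact that $\Lambda(t)$ is skew-symmetric, which is built in since $\Lambda(t)$ encodes the quasi-commutation relations among the $X_{i}(t)$.

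For part~(2), the matrix on the right has rank $|I_{\ufv}|$, because $D$ is diagonal with strictly positive entries. On the left, the product cannot have rank exceeding $\rank \tB(t)^{T}$, which is itself at most $|I_{\ufv}|$ since $\tB(t)^{T}$ is a matrix of size $|I_{\ufv}|\times|I|$. Both inequalities must therefore be equalities, giving $\rank \tB(t)=|I_{\ufv}|$.

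For part~(1), I would form the auxiliary $|I_{\ufv}|\times|I_{\ufv}|$ matrix $M:=\tB(t)^{T}\Lambda(t)\tB(t)$ and compute it in two ways. Substituting the compatibility relation on the left gives $M=\begin{pmatrix}D & 0\end{pmatrix}\tB(t)$, whose $(k,i)$-entry for $k,i\in I_{\ufv}$ is simply $\diag_{k}b_{ki}(t)$ — the frozen rows of $\tB(t)$ contributing nothing thanks to the $0$-block. On the other hand, skew-symmetry of $\Lambda(t)$ yields $M^{T}=\tB(t)^{T}\Lambda(t)^{T}\tB(t)=-M$, so also $M_{ki}=-M_{ik}=-\diag_{i}b_{ik}(t)$. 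Equating the two expressions produces the identity $\diag_{i}b_{ik}(t)=-\diag_{k}b_{ki}(t)$.

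There is no serious obstacle here; the entire lemma reduces to a short linear algebra manipulation. The only thing to keep straight is the block structure — that $\tB(t)$ has columns indexed only by $I_{\ufv}$ while $\Lambda(t)$ is indexed over the full vertex set $I$ — so that the $0$-block in $\begin{pmatrix}D & 0\end{pmatrix}$ is absorbed harmlessly in the computation for part~(1), and so that the rank count in part~(2) is performed relative to the correct ambient dimension.
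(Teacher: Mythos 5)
The paper states this lemma as a citation to Berenstein--Zelevinsky without supplying a proof, so there is no in-paper argument to compare against. Your proof is correct and is essentially the standard one: part~(2) by a rank count on both sides of $\tB(t)^{T}\Lambda(t)=(D\ \ 0)$, and part~(1) by observing that $M:=\tB(t)^{T}\Lambda(t)\tB(t)$ is skew-symmetric (from $\Lambda(t)^{T}=-\Lambda(t)$) while also equal to $D\,B(t)$ after substituting the compatibility relation, which forces $d_{k}b_{ki}=-d_{i}b_{ik}$. The one hypothesis you invoke slightly informally --- skew-symmetry of $\Lambda(t)$ --- is indeed part of the Berenstein--Zelevinsky definition of a compatible pair (it is what makes the twisted product $X^{m}*X^{m'}=v^{\lambda(m,m')}X^{m+m'}$ associative and gives the usual $q$-commutation relations), so the argument is complete.
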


Define the following lattices (of column vectors):
\begin{gather*}
\Mc(t) = \oplus_{i\in I}\Z f_{i}(t)\simeq\Z^{I},
\\
\Nufv(t) = \oplus_{k\in I_{\ufv}}\Z e_{k}(t)\simeq\Z^{I_{\ufv}},
\end{gather*}
where $f_{i}(t)$, $e_{k}(t)$ denote the $i$-th and $k$-th unit
vectors respectively. Denote
\begin{gather*}
\Nufv^{\geq0}(t)=\oplus_{k\in I_{\ufv}}\N e_{k}(t)\simeq\N^{I_{\ufv}}.
\end{gather*}

Define the linear map $p^{*}\colon\Nufv(t)\rightarrow\Mc(t)$ such that
$p^{*}n=\tB(t)n$. Let $\lambda$ denote the bilinear form on $\Mc(t)$
such that
\begin{gather*}
\lambda(g,g') = g^{T}\Lambda(t)g'.
\end{gather*}

\begin{Lem}
For any $i\in I$, $k\in I_{\ufv}$, we have $\lambda(f_{i}(t),p^{*}e_{k}(t))=-\delta_{ik}\diag_{k}$.
\end{Lem}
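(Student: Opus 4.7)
The plan is to unwind the definitions of $p^*$ and $\lambda$, rewrite the quantity in question as a single matrix entry of $\Lambda(t)\tilde{B}(t)$, and then read it off from the compatibility condition $\tilde{B}(t)^{T}\Lambda(t)=\begin{pmatrix}D & 0\end{pmatrix}$.

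First I would expand, for $i\in I$ and $k\in I_{\ufv}$,
\begin{gather*}
\lambda\bigl(f_{i}(t),p^{*}e_{k}(t)\bigr) = f_{i}(t)^{T}\,\Lambda(t)\,\tilde{B}(t)\,e_{k}(t),
\end{gather*}
so the quantity is exactly the $(i,k)$ entry of the $I\times I_{\ufv}$ matrix $\Lambda(t)\tilde{B}(t)$. The task therefore reduces to identifying this entry.

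Next I would transpose the compatibility identity $\tilde{B}(t)^{T}\Lambda(t)=\begin{pmatrix}D & 0\end{pmatrix}$ to obtain $\Lambda(t)^{T}\tilde{B}(t)=\begin{pmatrix}D\\ 0\end{pmatrix}$, and then invoke the skew-symmetry $\Lambda(t)^{T}=-\Lambda(t)$ of the $\Lambda$-matrix to conclude $\Lambda(t)\tilde{B}(t)=-\begin{pmatrix}D\\ 0\end{pmatrix}$. Reading off the $(i,k)$ entry of the right-hand side gives $-d_{k}$ when $i=k\in I_{\ufv}$ and $0$ otherwise, which is precisely $-\delta_{ik}d_{k}$.

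There is no real obstacle here: the content of the lemma is a direct unpacking of the compatibility of $(\tilde{B}(t),\Lambda(t))$ together with the skew-symmetry of $\Lambda(t)$. The only mild point worth noting is that the formula $-\delta_{ik}d_{k}$ is stated uniformly for $i\in I$, and this is harmless because whenever $i\in I_{\fv}$, the index $k\in I_{\ufv}$ forces $\delta_{ik}=0$, in agreement with the fact that the rows of $\Lambda(t)\tilde{B}(t)$ indexed by frozen vertices vanish by the block shape $\begin{pmatrix}D\\ 0\end{pmatrix}$.
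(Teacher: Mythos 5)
Your proof is correct and complete. The paper states this lemma without proof (it is a standard consequence of the Berenstein--Zelevinsky compatibility condition), and your argument — unwinding $p^*$ and $\lambda$, transposing $\tilde{B}(t)^T\Lambda(t)=\begin{pmatrix}D & 0\end{pmatrix}$, and invoking skew-symmetry of $\Lambda(t)$ to identify $\Lambda(t)\tilde{B}(t)=-\begin{pmatrix}D\\ 0\end{pmatrix}$ — is exactly the intended reasoning, including the correct observation that the frozen rows vanish so the formula $-\delta_{ik}\diag_k$ holds uniformly in $i\in I$.
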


The group algebra of $\Mc(t)$ is the Laurent polynomial ring $\kk[\Mc(t)]:=\kk[X(t)^{m}]_{m\in\Mc(t)}=\kk[X_{i}(t)^{\pm}]_{i\in I}$
with the usual addition and multiplication $(+,\cdot)$, where we
denote the Laurent monomial $X(t)^{m}=\prod_{i\in I}X_{i}(t)^{m_{i}}$ for $m=\sum m_{i}f_{i}(t)$.

The quantum Laurent polynomial ring (also called the quantum torus)
$\LP(t)$ associated to~$t$ is defined as the commutative algebra
$\kk[\Mc(t)]$ further endowed with the twisted product~$*$:
\begin{gather*}
X(t)^{m}*X(t)^{m'} = v^{\lambda(m,m')}X(t)^{m+m'}.
\end{gather*}
By the algebraic structure on $\LP(t)$, we mean $(+,*)$ unless otherwise
specified.

The monomials $X(t)^{m}$, $m\in\N^{I}$, are called the quantum cluster
monomials associated to~$t$. The Laurent monomials $X(t)^{m}$, $m\in\N^{I_{\ufv}}\oplus\Z^{I_{\fv}}$,
are called the localized quantum cluster monomials associated to $t$.

Define the $Y$-variables to be $Y_{k}(t):=X(t)^{p^{*}e_{k}(t)}$,
$k\in I_{\ufv}$. Denote $Y(t)^{n}=X(t)^{p^{*}n}$ for $n\in\Nufv(t)$.

We also define $\cF(t)$ to be the skew field of fractions of $\LP(t)$.

For simplicity, we often omit the symbol $t$ when there is no confusion.

\subsection{Mutations}

For any $k\in I_{\ufv}$, we have an operation $\mu_{k}$ called mutation
in the direction $k$ which gives us a~new seed $t'=\mu_{k}t=\big(\tB(t'),\Lambda(t'),(X_{i}(t'))_{i\in I}\big)$,
where $X_{i}':=X_{i}(t')$ are indeterminates. See \cite{BerensteinZelevinsky05}
for precise definitions of $\tB(t')$, $\Lambda(t')$. Recall that
we have $\mu_{k}^{2}t=t$.

Given any initial seed $t_{0}$, we let $\Delta_{t_{0}}^{+}$ denote
the set of seeds obtained from $t_{0}$ by iterated mutations. Then
we have $\Delta_{t_{0}}^{+}=\Delta_{t}^{+}$ if $t\in\Delta_{t_{0}}^{+}$.
Throughout this paper, we will always work with seeds from the same
set $\Delta^{+}=\Delta_{t_{0}}^{+}$, where the initial seed $t_{0}$
is often omitted for simplicity.

For simplicity, denote $t=\big(\tB,\Lambda,(X_{i})\big)$ and $t'=\big(\tB',\Lambda',(X_{i}')\big)$.

Denote $v_{k}=v^{\diag_{k}}$. Recall that there is an algebra isomorphism
$\mu_{k}^{*}\colon\cF(t')\simeq\cF(t)$ called the mutation birational
map, such that we have
\begin{gather*}
\mu_{k}^{*}(X_{k}') =v^{\lambda(f_{k},\sum_{j\in I}[-b_{jk}]_{+}f_{j})}X_{k}^{-1}*\big(X^{\sum_{j\in I}[-b_{jk}]_{+}f_{j}}+v_{k}^{-1}X^{\sum_{i\in I}[b_{ik}]_{+}f_{i}}\big)
\end{gather*}
and, for $i\neq k$,
\begin{gather*}
\mu_{k}^{*}(X_{i}') = X_{i}.
\end{gather*}

Notice that we can also write $\mu_{k}^{*}(X_{k}')=X^{-f_{k}+\sum_{j\in I}[-b_{jk}]_{+}f_{j}}\cdot(1+Y_{k})$.
Recall that $(\mu_{k}^{*})^{2}$ is an~identity.

Let there be given any seed $t'=\seq_{t',t}t$, where $\seq_{t',t}=\seq=\mu_{k_{r}}\cdots\mu_{k_{2}}\mu_{k_{1}}$
is a sequence of~mutations (read from right to left). We define the
mutation birational map $\seq_{t',t}^{*}\colon\cF(t')\simeq\cF(t)$ as
the composition $\mu_{k_{1}}^{*}\cdots\mu_{k_{r}}^{*}$. It is known
that $\seq_{t',t}^{*}$ is independent of the choice for the mutation
sequence $\seq_{t',t}$ from $t$ to $t'$. Define a sequence of mutations
$\seq_{t,t'}=\mu_{k_{1}}\mu_{k_{2}}\cdots\mu_{k_{r}}$ correspondingly,
which is often denoted by $(\seq_{t',t})^{-1}$ for simplicity.

Notice that, if $i\in I_{\fv}$, we have $\seq_{t',t}^{*}X_{i}(t')=X_{i}(t)$
for all $t'\in\Delta^{+}$. Correspondingly, we call~$X_{i}(t')$,
$i\in I_{\fv}$, $t'\in\Delta^{+}$, the frozen variables, and denote
them by $X_{i}$ for simplicity. Define the set of frozen factors
to be $\cRing=\big\{X^{m}\,|\, m\in\Z^{I_{\fv}}\big\}$.

\subsection{Cluster algebras}

Let there be given a quantum seed $t\in\Delta^{+}$.

\begin{Def}
The (partially compactified) quantum cluster algebra $\bQClAlg(t)$
is defined to be the $\kk$-subalgebra of $\LP(t)$ generated by the
quantum cluster variables $\seq_{t',t}^{*}X_{i}(t')$, $i\in I$,
$t'\in\Delta^{+}$.

The (localized) quantum cluster algebra $\qClAlg(t)$ is defined to
be the localization of $\bQClAlg(t)$ at $\cRing$.

The upper quantum cluster algebra $\qUpClAlg(t)$ is defined to be
$\cap_{t'\in\Delta^{+}}\seq_{t',t}^{*}\LP(t')$.
\end{Def}

Recall that we have $\bQClAlg(t)\subset\qClAlg(t)\subset\qUpClAlg(t)$.
Moreover, for $t,t'\in\Delta^{+}$, we have $\seq_{t',t}^{*}\qUpClAlg(t')=\qUpClAlg(t)$,
$\seq_{t',t}^{*}\qClAlg(t')=\qClAlg(t)$, $\seq_{t',t}^{*}\bQClAlg(t')=\bQClAlg(t)$.
It is sometimes convenient to forget the sym\-bols~$t$,~$t'$ by viewing
$\seq_{t',t}^{*}$ as an identification.

\section{Dominance orders and pointedness}

In this section, we recall the notions and some basic results concerning
dominance orders and pointed functions from \cite{qin2017triangular,qin2019bases}. We also describe properties of codegrees and
copointed functions in analogous to those of degrees and pointed functions.

\subsection{Dominance orders and pointedness}\label{subsec:Pointedness}

Let there be given a quantum seed $t$.

\begin{Def}[dominance order] We denote $g'\preceq_{t}g$ if there exists some $n\in\Nufv^{\geq0}(t)$
such that $g'=g+p^{*}n$. In this case, we say $g'$ is dominated by~$g$, or $g'$ is inferior to $g$.
\end{Def}

The meanings of symbols $\prec_{t}$, $\succ_{t}$, $\succeq_{t}$ are given in the obvious way.

\begin{Lem}[\cite{qin2017triangular}]\label{lem:finite_interval}
For any $g,g'\in\Mc(t)$, there exist finitely many $g''$ such that
$g''\preceq_{t}g$ and~$g''\succeq_{t}g'$.
\end{Lem}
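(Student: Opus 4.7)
The plan is to exploit the full rank of $\tB(t)$ and reduce the claim to the finiteness of decompositions of a fixed nonnegative integer vector.

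First, I would unpack the definitions. An element $g''\in\Mc(t)$ satisfies both $g''\preceq_{t}g$ and $g''\succeq_{t}g'$ iff there exist $n_{1},n_{2}\in\Nufv^{\geq 0}(t)$ with $g''=g+p^{*}n_{1}$ and $g'=g''+p^{*}n_{2}$. Adding these yields $g'=g+p^{*}(n_{1}+n_{2})$. If $g'\not\preceq_{t}g$, then no such $g''$ can exist and the claim is vacuous, so I may assume $g'\preceq_{t}g$.

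Next, I would invoke the earlier observation that $\tB(t)$ has full rank $|I_{\ufv}|$, which is equivalent to the linear map $p^{*}\colon\Nufv(t)\rightarrow\Mc(t)$, $n\mapsto \tB(t)n$, being injective. Therefore the sum $n_{1}+n_{2}$ is forced to be the unique $n\in\Nufv^{\geq 0}(t)$ satisfying $p^{*}n=g'-g$.

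Finally, since $n_{1},n_{2}\in\Nufv^{\geq 0}(t)$ and $n_{1}+n_{2}=n$ is fixed, each coordinate $(n_{1})_{k}$ lies in the finite set $\{0,1,\dots,n_{k}\}$, so there are only finitely many choices for $n_{1}$. Each such $n_{1}$ determines $g''=g+p^{*}n_{1}$, and distinct $n_{1}$ yield distinct $g''$ again by injectivity of $p^{*}$. Hence the interval $\{g''\,|\,g'\preceq_{t}g''\preceq_{t}g\}$ is finite. There is no real obstacle here; the statement reduces in one line to a finiteness fact in $\N^{I_{\ufv}}$ once injectivity of $p^{*}$ is used to lock down the total slack between $g$ and $g'$.
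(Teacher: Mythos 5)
Your proof is correct and follows the standard argument: use the full-rank condition on $\tB(t)$ to conclude that $p^*$ is injective, so the total slack $n_1+n_2$ between $g$ and $g'$ is a uniquely determined vector in $\N^{I_{\ufv}}$, leaving only finitely many nonnegative splittings. This is essentially the same route as the cited proof in \cite{qin2017triangular}.
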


Notice that $\LP(t)$ has a subring $\kk[\Nufv^{\geq0}(t)]:=\kk[Y_{k}(t)]_{k\in I_{\ufv}}$.
Let $\widehat{\kk[Y_{k}(t)]_{k\in I_{\ufv}}}$ denote the completion
of $\kk[Y_{k}(t)]_{k\in I_{\ufv}}$ with respect to the maximal ideal
generated by $Y_{k}(t)$, $k\in I_{\ufv}$. The~formal completion
of $\LP(t)$ is defined to be{\samepage
\begin{gather*}
\hLP(t) = \LP(t)\otimes_{\kk[Y_{k}(t)]_{k\in I_{\ufv}}}\widehat{\kk[Y_{k}(t)]_{k\in I_{\ufv}}}.
\end{gather*}
Elements in $\hLP(t)$ will be called functions or formal Laurent
series.}

Similarly, we consider the subring $\kk\big[Y_{k}^{-1}(t)\big]_{k\in I_{\ufv}}$
of $\LP(t)$ and its completion $\widehat{\kk\big[Y_{k}^{-1}(t)\big]_{k\in I_{\ufv}}}$
with respect to the maximal ideal generated by $Y_{k}^{-1}(t)$, $k\in I_{\ufv}$.
We define the following completion of $\LP(t)$:
\begin{gather*}
\tLP(t) : =\LP(t)\otimes_{\kk[Y_{k}^{-1}(t)]_{k\in I_{\ufv}}}\widehat{\kk\big[Y_{k}^{-1}(t)\big]_{k\in I_{\ufv}}}.
\end{gather*}

By a formal sum, we mean a possibly infinite sum. Let $Z$ denote
a formal sum $Z=\sum_{m\in\Mc(t)}b_{m}X(t)^{m}$. Notice that it belongs
to $\hLP(t)$ (resp.~$\tLP(t)$) if and only if its Laurent deg\-ree
support $\supp_{\Mc(t)}Z=\{m\,|\, b_{m}\neq0\}$ has finitely many $\prec_{t}$-maximal
elements (resp.~finitely many $\prec_{t}$-minimal elements).

\begin{Def}[(co)degrees and (co)pointedness]
The formal sum $Z$ is said to have degree $g$ if $\supp_{\Mc(t)}Z$
has a unique $\prec_{t}$-maximal element $g$, and we denote $\deg^{t}Z=g$.
It is said to be pointed at $g$ or $g$-pointed if we further have
$b_{g}=1$.

The formal sum $Z$ is said to have codegree $\eta$ if $\supp_{\Mc(t)}Z$
has a unique $\prec_{t}$-minimal element~$\eta$, and we denote $\codeg^{t}Z=\eta$.
It is said to be copointed at $\eta$ or $\eta$-copointed if we further
have $b_{\eta}=1$.

Let there be given a set $S$. It is said to be $\Mc(t)$-pointed
if it takes the form $S=\{S_{g}\,|\, g\in\Mc(t)\}$, where $S_{g}$ are
$g$-pointed functions in $\hLP(t)$. Similarly, it is said to be
$\Mc(t)$-copointed, if it takes the form $S=\{S^{\eta}\,|\,\eta\in\Mc(t)\}$,
where $S^{\eta}$ are $\eta$-copointed functions in $\tLP(t)$.
\end{Def}

As in \cite{qin2019bases}, if $Z$ is both pointed at $g$ and copointed
at $\eta$, it is said to be bipointed at the bidegree $(g,\eta)$.

\begin{Def}[normalization]
Let $\cF(\kk)$ denote the fraction field of $\kk$. If $Z$ has degree
$g$, we~define its (degree) normalization in $\hLP(t)\otimes_{\kk}\cF(\kk)$
to be
\[
[Z]^{t}:=b_{g}^{-1}Z.
\]
Similarly, if $Z$ has codegree $\eta$, we define its codegree normalization
in $\tLP(t)\otimes_{\kk}\cF(\kk)$ to be:
\[
\{Z\}^{t}:=b_{\eta}^{-1}Z.
\]
\end{Def}

Let there be given a (possibly infinite) collection of formal sums
$Z_{j}$. Notice that their formal sum $\sum_{j}Z_{j}$ is well-defined
if, at each Laurent degrees, only finitely many of them have non-vanishing
coefficients.

\begin{Def}[degree triangularity]
A formal sum $\sum_{j}b_{j}Z_{j}$ of pointed elements $Z_{j}\in\hLP(t)$,
$b_{j}\in\kk$, is said to be degree $\prec_{t}$-unitriangular, or
$\prec_{t}$-unitriangular for short, if $\big\{\deg^{t}Z_{j}\,|\, b_{j}\neq0\big\}$
has a unique $\prec_{t}$-maximal element $\deg^{t}Z_{j_{0}}$ and
$b_{j_{0}}=1$. It is further said to be degree $(\prec_{t},\mm)$-unitriangular,
or $(\prec_{t},\mm)$-unitriangular for short, if we further have
$b_{j}\in\mm$ for $j\neq j_{0}$.
\end{Def}

\begin{Def}[codegree triangularity]
A formal sum $\sum_{j}b_{j}Z_{j}$ of copointed elements $Z_{j}\in\tLP(t)$,
$b_{j}\in\kk$, is said to be codegree $\succ_{t}$-unitriangular
if $\{\codeg^{t}Z_{j}\,|\, b_{j}\neq0\}$ has a unique $\prec_{t}$-minimal
element $\codeg^{t}Z_{j_{0}}$ and $b_{j_{0}}=1$. It is further said
to be codegree $(\succ_{t},\mm)$-unitriangular, if we further have
$b_{j}\in\mm$ for $j\neq j_{0}$.
\end{Def}

Notice that Lemma~\ref{lem:finite_interval} implies that a degree
$\prec_{t}$-unitriangular sum is a well-defined sum in~$\hLP(t)$
and, similarly, a codegree $\succ_{t}$-unitriangular sum is a well-defined
sum in $\tLP(t)$.

\begin{Lem}\label{lem:triangular_decomp}\quad
\begin{enumerate}\itemsep=0pt
\item[$(1)$] Let there be given a $\Mc(t)$-pointed
set $S$, then any pointed function $Z\in\hLP(t)$ can be written
uniquely as a $($degree$)$ $\prec_{t}$-unitriangular sum of elements
of $S$ {\rm\cite{qin2017triangular}}.

\item[$(2)$] Let there be given a $\Mc(t)$-copointed set $S$, then any copointed
element $Z\in\tLP(t)$ can be written uniquely as a codegree $\succ_{t}$-unitriangular
sum of elements of $S$.
\end{enumerate}
\end{Lem}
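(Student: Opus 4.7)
The plan is to take part (1) as given from \cite{qin2017triangular} and to prove part (2) by the same strategy with the dominance order reversed. Concretely, given an $\eta$-copointed $Z\in\tLP(t)$, I would construct the coefficients $b_\alpha\in\kk$ of the expansion $Z=\sum_\alpha b_\alpha S^\alpha$ by induction along the dominance order, starting from the unique minimum $\eta$ and moving upward.

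Equating coefficients of $X^m$ on both sides of the desired identity, and using that $S^\alpha$ is $\alpha$-copointed (so $[X^m]S^\alpha=0$ unless $\alpha\preceq_t m$, with $[X^\alpha]S^\alpha=1$), forces
\[
b_m \;=\; [X^m]Z\;-\;\sum_{\alpha\prec_t m}b_\alpha\,[X^m]S^\alpha.
\]
I would take this as a definition, with base case $b_\eta=[X^\eta]Z=1$, and observe that $b_m=0$ automatically for $m\not\succeq_t\eta$ (the right-hand side then vanishes, since $m\notin\supp Z$ and any $\alpha\prec_t m$ with $b_\alpha\neq 0$ would satisfy $\eta\preceq_t\alpha\prec_t m$, forcing $\eta\prec_t m$, a contradiction). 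For $m\succ_t\eta$, the recursion is well-founded because Lemma~\ref{lem:finite_interval} makes the interval $\{\alpha:\eta\preceq_t\alpha\preceq_t m\}$ finite, so only finitely many previously-defined $b_\alpha$ appear on the right-hand side.

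I would then verify three points. First, the formal sum $\sum_\alpha b_\alpha S^\alpha$ lies in $\tLP(t)$: at any Laurent degree $m$, only indices $\alpha$ with $\eta\preceq_t\alpha\preceq_t m$ can contribute to the coefficient at $X^m$, and by Lemma~\ref{lem:finite_interval} these form a finite set. Second, equality with $Z$ holds Laurent-degree by Laurent-degree by the very choice of $b_m$. Third, the expansion is codegree $\succ_t$-unitriangular: the support $\{\alpha:b_\alpha\neq 0\}$ lies in $\{\alpha\succeq_t\eta\}$ and $b_\eta=1$, so $\eta$ is the unique $\prec_t$-minimal index.

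For uniqueness, any other codegree $\succ_t$-unitriangular expansion $\sum_\alpha b'_\alpha S^\alpha$ of $Z$ must also satisfy $b'_\eta=1$ at the unique minimum, and reading off the coefficient of $X^m$ shows that the $b'_\alpha$ satisfy the same recursion, hence coincide with the $b_\alpha$. The main (and only genuine) technical point is the well-definedness of the formal sum in the ``reversed'' completion $\tLP(t)$; this is precisely what the finite interval lemma guarantees when one swaps the roles of $\prec_t$-minima and $\prec_t$-maxima relative to the proof of~(1).
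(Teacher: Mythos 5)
Your argument is correct and matches the first of the two routes the paper itself suggests, namely rerunning the recursive decomposition argument from \cite{qin2017triangular} with the roles of $\prec_t$-maxima and $\prec_t$-minima exchanged and the well-foundedness supplied by Lemma~\ref{lem:finite_interval}. The paper also points out an alternative you did not pursue: deduce~(2) directly from~(1) by transporting the problem through the anti-isomorphism $\iota\colon\hLP(t)\simeq\tLP(t^{\op})$, which converts copointed objects in $\tLP(t)$ into pointed objects in $\hLP(t^{\op})$ and avoids repeating the induction.
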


\begin{proof}$(1)$ is proved as in \cite[Lemma~3.1.10(i)]{qin2017triangular}, see
also \cite[Definition--Lemma~4.1.1]{qin2019bases}.
$(2)$~can be proved similarly, or we can deduce it from (1) by using
the map $\iota$ defined in~\eqref{eq:opposite_hLP}.
\end{proof}

In the cases of Lemma~\ref{lem:triangular_decomp}, we say $Z$ is
(degree) $\prec_{t}$-unitriangular to $S$ or codegree $\succ_{t}$-unitriangular
to $S$ respectively. It is further said to be (degree) $(\prec_{t},\mm)$-unitriangular
to $S$ or~codegree $(\succ_{t},\mm)$-unitriangular to $S$ respectively,
if its decomposition in $S$ has such properties.

\begin{Eg}[type $A_2$]\label{eg:A2_cluster}
Take $I=\{1,2\}=I_{\ufv}$ and $I_{\fv}=\varnothing$. Consider the
initial quantum seed~$t_{0}$ such that $B(t_{0}):=B:=\left(\begin{smallmatrix}
0 & -1\\1 & 0
\end{smallmatrix}\right)$, $\Lambda(t_{0}):=\Lambda:=B$ and denote its quantum cluster variables
by~$X_{1}$,~$X_{2}$ for simplicity. Then we have $Y_{1}=X^{f_{2}}=X_{2}$
and $Y_{2}=X^{-f_{1}}=X_{1}^{-1}$, where~$f_{i}$ denote the $i$-th
unit vector.

The corresponding quantum cluster algebra $\qClAlg$ has five quantum
cluster variables. It has the basis $\can$ consisting of the quantum
cluster monomials.

Recall that, in $\LP(t_{0})$, the commutative product is denoted
by $\cdot$ or is omitted, while the twisted product is denoted by
$*$. The non-initial quantum cluster variables can be written as
the following in $\LP(t_{0})$:
\begin{gather*}
P_{2} :=X^{f_{1}-f_{2}}+X^{-f_{2}}=X^{f_{1}-f_{2}}\cdot(1+Y_{2}),
\\
I_{2}:=P_{1} :=X^{-f_{2}}+X^{-f_{1}-f_{2}}+X^{-f_{1}}=X^{-f_{2}}\cdot(1+Y_{2}+Y_{1}\cdot Y_{2}),
\\
I_{1}:=X^{-f_{1}}+X^{-f_{1}+f_{2}}=X^{-f_{1}}\cdot(1+Y_{1}).
\end{gather*}
Then $P_{2}$, $P_{1}$, $I_{1}$ are bipointed at the bidegrees $(f_{1}-f_{2},-f_{2})$,
$(-f_{2},-f_{1})$, $(-f_{1},-f_{1}+f_{2})$ respectively.

Let us compute some degree normalized products in $\LP(t_{0})$:
\begin{gather*}
[X_{1}*I_{1}] =X_{1}*I_{1}=1+v^{-1}X_{2},
\\
[X_{1}*I_{2}] =v^{\Lambda_{12}}X_{1}*I_{2}=X^{f_{1}-f_{2}}+X^{-f_{2}}+v^{-1}\cdot1=P_{2}+v^{-1}\cdot1,
\\
[X_{2}*I_{1}] =v^{\Lambda_{21}}X_{2}*I_{1}=X^{f_{2}-f_{1}}+X^{-f_{1}+2f_{2}},
\\
[X_{2}*I_{2}] =X_{2}*I_{2}=1+v^{-1}X^{-f_{1}}+v^{-1}X^{-f_{1}+f_{2}}=1+v^{-1}I_{1}.
\end{gather*}
Notice that $v^{\Lambda_{21}}X_{2}*I_{1}$ is a quantum cluster monomial
of the seed $\mu_{1}(t_{0})$. Then the above normalized products
are degree $(\prec_{t},\mm)$-unitriangular to the basis $\can$.

Similarly, we compute some codegree normalized products in $\LP(t_{0})$:
\begin{gather*}
\{P_{1}*X_{1}\} =P_{1}*X_{1}=v^{-\Lambda_{21}}X^{f_{1}-f_{2}}+v^{-\Lambda_{21}}X^{-f_{2}}+1=v^{-1}P_{2}+1,\\
\{P_{1}*X_{2}\} =v^{\Lambda_{12}}P_{1}*X_{2}=v^{-1}+X^{-f_{1}}+X^{f_{2}-f_{1}}=v^{-1}\cdot1+I_{1},\\
\{P_{2}*X_{1}\} =v^{\Lambda_{21}}P_{2}*X_{1}=X^{2f_{1}-f_{2}}+X^{f_{1}-f_{2}},\\
\{P_{2}*X_{2}\} =P_{2}*X_{2}=v^{-1}X_{1}+1.
\end{gather*}
Notice that $v^{\Lambda_{21}}P_{2}*X_{1}$ is a quantum cluster monomial
of $\mu_{2}(t_{0})$. Then the above normalized products are codegree
$(\succ_{t},\mm)$-unitriangular to the basis $\can$.
\end{Eg}

\subsection{Tropical transformations and compatibility\label{subsec:Tropical-transformations}}

As before, let there be given seeds $t'=\seq t$, where $\seq=\seq_{t',t}$
is a sequence of mutations. Denote $\seq_{t,t'}=\seq_{t',t}^{-1}$.
Denote the $i$-th cluster variables associated to $t$ and $t'$
by $X_{i}$ and $X_{i}'$ respectively. Let $f_{i}$, $f_{i}'$ denote
the $i$-th unit vectors associated to $t$ and $t'$ respectively.

\begin{Def}[tropical transformation]
If $t'=\mu_{k}t$, $k\in I_{\ufv}$, we define the (degree) tropical
transformation $\phi_{t',t}\colon\Mc(t)\simeq\Mc(t')$ such that, for any
$g=(g_{i})_{i\in I}\in\Mc(t)\simeq\Z^{I}$, its image $\phi_{t',t}g=(g'_{i})_{i\in I}\in\Mc(t')\simeq\Z^{I}$
is given by
\begin{gather*}
g'_{i} = \begin{cases}
-g_{k}, & i=k,\\
g_{i}+b_{ik}[g_{k}]_{+}, & i\neq k,\quad b_{ik}\geq0,\\
g_{i}+b_{ik}[-g_{k}]_{+}, & i\neq k,\quad b_{ik}<0.
\end{cases}
\end{gather*}
\end{Def}

In general, we define the (degree) tropical transformation $\phi_{t',t}\colon\Mc(t)\simeq\Mc(t')$
as the composition of the tropical transformations for adjacent seeds
along the mutation sequence $\seq$ from~$t$ to~$t'$. By~\cite{gross2013birational},
$\phi_{t',t}$ is the tropicalization of certain birational maps between
the split algebraic tori associate to $t$, $t'$ and, consequently, independent
of the choice of $\seq$.

Recall that $\seq_{t,t'}^{*}X_{i}$ is a pointed Laurent polynomial
in $\LP(t')$ by \cite{DerksenWeymanZelevinsky09,
gross2018canonical,Tran09}.

\begin{Def}[degree linear transformation {\cite[Definition 3.3.1]{qin2019bases}}]
Define $\psi_{t',t}\colon\Mc(t)\simeq\Mc(t')$ to be the linear map such
that $\psi_{t',t}(f_{i})=\deg^{t'}\seq_{t,t'}^{*}X_{i}$.
\end{Def}

By \cite[Lemma~3.3.4]{qin2019bases}, the mutation map $\seq_{t,t'}^{*}\colon\cF(t)\simeq\cF(t')$
induces an injective algebra homomorphism $\hmu\colon\LP(t)\hookrightarrow\hLP(t')$.
It has the following property.

\begin{Lem}[{\cite[Lemma~3.3.7]{qin2019bases}}]
For any $m\in\Z^{I}$, $\hmu X^{m}$ is a well-defined function in
$\hLP(t')$ pointed at degree $\psi_{t',t}m$.
\end{Lem}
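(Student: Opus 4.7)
My plan is to deduce the statement directly from the Laurent phenomenon for quantum cluster variables, together with a closure property of pointed functions under twisted products and inverses. By the Laurent phenomenon cited just before the lemma, each $\hmu X_i$ is a pointed Laurent polynomial in $\LP(t') \subset \hLP(t')$ with leading degree $g_i := \psi_{t',t} f_i$ and leading coefficient $1$; equivalently, $\hmu X_i = (X')^{g_i} * F_i$ with $F_i \in \kk[Y_k(t')]_{k \in I_\ufv}$ a polynomial of constant term $1$.

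For a general $m = \sum_i m_i f_i \in \Z^I$, I would write $X^m$ as an ordered twisted product $v^{\alpha}\, X_1^{m_1} * \cdots * X_{|I|}^{m_{|I|}}$ in $\LP(t)$ for a suitable scalar exponent $\alpha$, and apply the algebra homomorphism $\hmu$ to obtain
\[
\hmu X^m \ =\ v^{\alpha}\, (\hmu X_1)^{m_1} * \cdots * (\hmu X_{|I|})^{m_{|I|}}.
\]
For $m_i \geq 0$ the factor $(\hmu X_i)^{m_i}$ is a twisted-product power of a pointed Laurent polynomial; for $m_i < 0$, since $F_i$ has constant term $1$, it is invertible in the local ring $\widehat{\kk[Y_k(t')]_{k \in I_\ufv}}$ via a geometric series, so $(\hmu X_i)^{-1}$ is well-defined in $\hLP(t')$ and pointed at $-g_i$.

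The remaining step is to show that pointed functions of the form $(X')^g * F$, with $F$ a constant-term-$1$ unit in $\widehat{\kk[Y_k(t')]_{k \in I_\ufv}}$, are closed under twisted products with leading degrees adding. This follows from the quasi-commutation $Y_k(t') * Y_l(t') = v^{\lambda(p^* e_k, p^* e_l)} Y_k(t') \cdot Y_l(t')$, which shows that $\widehat{\kk[Y_k(t')]_{k \in I_\ufv}}$ is an algebra under the twisted product and that its maximal ideal is preserved; consequently the twisted product of two units of constant term $1$ is again of this form. Iterating, $\hmu X^m$ is pointed at $\sum_i m_i g_i = \psi_{t',t} m$ by linearity of $\psi_{t',t}$. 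The main subtlety to verify carefully is that at each stage the twisted product of formal Laurent series lies in $\hLP(t')$ with a unique $\prec_{t'}$-maximal support element; this is essentially a continuity statement for twisted multiplication with respect to the $(Y_k(t'))$-adic filtration, and is the only place where one must be attentive to completions.
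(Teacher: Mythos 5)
Your plan---factor $X^m$ into a twisted product of the $X_i^{m_i}$, push through the algebra homomorphism $\hmu$, invert each $F_i$ by a geometric series when $m_i<0$, and track supports under twisted multiplication---is the right shape, and it does establish that $\hmu X^m$ is a well-defined element of $\hLP(t')$ whose Laurent support has a unique $\prec_{t'}$-maximal element, namely $\sum_i m_i g_i = \psi_{t',t}m$. But the lemma asserts more: the coefficient at $X^{\psi_{t',t}m}$ is $1$, i.e.\ $\hmu X^m$ is \emph{pointed} there. Your closure step is exactly where this is glossed over. The set you describe is \emph{not} closed under $*$: one has $\big((X')^{g_1}*F_1\big)*\big((X')^{g_2}*F_2\big)=v^{\lambda_{t'}(g_1,g_2)}(X')^{g_1+g_2}*F$ for some constant-term-$1$ unit $F$, so each product picks up a nontrivial $v$-power. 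After applying $\hmu$ to $X^m=v^{\alpha}X_1^{m_1}*\cdots*X_{|I|}^{m_{|I|}}$, with $\alpha=-\sum_{i<j}m_im_j\,\lambda_t(f_i,f_j)$, the leading coefficient of $\hmu X^m$ is $v^{\alpha+\sum_{i<j}m_im_j\,\lambda_{t'}(g_i,g_j)}$; this equals $1$ if and only if $\lambda_{t'}\big(\psi_{t',t}f_i,\psi_{t',t}f_j\big)=\lambda_t(f_i,f_j)$ for all $i,j$, i.e.\ $\psi_{t',t}$ is an isometry between the $\Lambda$-forms of $t$ and $t'$. Your final sentence, ``pointed at $\sum_i m_i g_i$ by linearity of $\psi_{t',t}$,'' only gives the additivity of degrees, not this normalization.

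That isometry is a genuine additional input. For a single mutation $\mu_k$ it follows from the Berenstein--Zelevinsky mutation rule $\Lambda(\mu_kt)=E_+^{T}\Lambda(t)E_+$ together with $E_+^{2}=\Id$, once one checks that $\psi_{\mu_kt,t}$ has matrix $E_+$; but assembling the general case is delicate because $\psi_{t',t}$ is defined via degrees of Laurent expansions, and its multiplicativity along a mutation path is essentially equivalent to the lemma itself, so you cannot simply compose single-mutation isometries without circularity. Alternatively, one could argue that $\hmu$ commutes with the bar involution, so the leading coefficient is a bar-invariant $v$-power and hence $1$---but that bar-compatibility is itself a statement that needs justification. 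Either way, this is the missing piece; the rest of your argument (well-definedness in $\hLP(t')$, additivity of degrees, invertibility of $F_i$ in the $(Y_k(t'))$-adic completion) is correct.
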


Moreover, for $Z\in\LP(t)\cap\seq_{t',t}^{*}\LP(t')$, we have $\hmu(Z)=\seq_{t,t'}^{*}Z$,
see \cite[Lemma~3.3.4]{qin2019bases}. Correspondingly, denote $\hmu$
by $\seq_{t,t'}^{*}$ for simplicity.

Consider the following set of Laurent polynomials
\begin{gather*}
\LP(t;t') : =\LP(t)\cap\seq_{t',t}^{*}\LP(t').
\end{gather*}
Then $\LP(t;t')$ is a $\kk$-algebra, such that $\seq_{t,t'}^{*}\LP(t;t')=\LP(t';t)$.

The following very useful result shows that certain mutation sequences
swap pointedness and copointedness, where $t[-1]$ is a shifted seed
constructed from a given seed $t$ (Definition~\ref{def:inj_reachable}).

\begin{Prop}[Swap {\cite[Propositions~3.3.9 and~3.3.10]{qin2019bases}}]\label{prop:swap_deg_codeg}\quad
\begin{enumerate}\itemsep=0pt
\item[$(1)$] For any $g,\eta\in\Mc(t)$, we have $\eta\preceq_{t}g$ if and
only if $\psi_{t[-1],t}\eta\succeq_{t}\psi_{t[-1],t}g$.

\item[$(2)$] Let there be given $Z\in\LP(t;t[-1])\subset\LP(t)$. Then $Z$
is $\eta$-copointed if and only if $\seq_{t,t[-1]}^{*}Z$ is $\psi_{t[-1],t}\eta$-pointed.
\end{enumerate}
\end{Prop}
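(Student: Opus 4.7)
The plan is to first establish (1) by reducing the dominance comparison to a cone identity under the linear map $\psi := \psi_{t[-1],t}$, and then deduce (2) by expanding $Z$ in the $t$-chart and applying the mutation map term-by-term while tracking supports.

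For (1): Since $\psi$ is $\Z$-linear and $\eta\preceq_t g$ is equivalent to $g-\eta\in\tB(t)\,\Nufv^{\geq 0}(t)$, the biconditional reduces to the key cone identity
\[
\psi\bigl(\tB(t)\cdot\Nufv^{\geq 0}(t)\bigr) = -\tB(t[-1])\cdot\Nufv^{\geq 0}(t[-1]).
\]
I would establish this columnwise: for each $k\in I_{\ufv}$, show that $\psi(p_t^* e_k(t)) = -p_{t[-1]}^* e_{\sigma(k)}(t[-1])$ for some permutation $\sigma$ of $I_{\ufv}$. This is the ``shift'' incarnation of the interplay between $g$-vectors and $c$-vectors: by construction and the sign-coherence of $c$-vectors, the $c$-vectors for the shift sequence $\seq_{t[-1],t}$ are the negatives of unit vectors up to the permutation $\sigma$, and passing to $g$-vectors via $\psi$ translates this into the displayed identity on the columns of $\tB$. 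The biconditional in (1) (with $\succeq_{t[-1]}$ on the right, correcting the subscript) then follows by applying $\psi$ to both sides of $g-\eta \in \tB(t)\cdot\Nufv^{\geq 0}(t)$.

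For (2): Write $Z=\sum_m c_m X(t)^m$ with $c_\eta=1$ and $\eta$ the unique $\prec_t$-minimum of $\supp Z$. Extending $\seq_{t,t[-1]}^*$ term-by-term via the injection of the cited lemma gives $\seq_{t,t[-1]}^* Z = \sum_m c_m\,\hmu X(t)^m \in \hLP(t[-1])$, and each $\hmu X(t)^m$ is $\psi(m)$-pointed. Since $\psi$ is a linear isomorphism and $\eta\preceq_t m$ for every $m$ in the support, part (1) gives $\psi(m)\preceq_{t[-1]}\psi(\eta)$, with equality only when $m=\eta$. A quick support check shows that no term with $m\neq\eta$ can contribute at $X(t[-1])^{\psi(\eta)}$ (such a contribution would force $\psi(\eta)\preceq_{t[-1]}\psi(m)\preceq_{t[-1]}\psi(\eta)$, hence $m=\eta$), so the coefficient of $X(t[-1])^{\psi(\eta)}$ in $\seq_{t,t[-1]}^* Z$ is exactly $c_\eta=1$ and the image is $\psi(\eta)$-pointed. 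The converse (pointedness of $\seq_{t,t[-1]}^* Z$ forces copointedness of $Z$) is obtained by running the same support analysis in reverse: the unique $\prec_{t[-1]}$-maximum of the image must be of the form $\psi(m_0)$ for some $m_0\in\supp Z$, and part (1) then forces $m_0\preceq_t m$ for every $m\in\supp Z$, identifying $m_0$ as the codegree of $Z$ with leading coefficient $1$.

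The main obstacle is the key columnwise identity in step (1): it rests on the structural description of the shift sequence $\seq_{t[-1],t}$ and the precise behavior of its $c$-vectors, which is itself a nontrivial consequence of the theory of $c$- and $g$-vectors developed in \cite{qin2017triangular,qin2019bases}. Once that identity is in place, everything else reduces to linear algebra combined with careful bookkeeping of supports through the term-by-term extension of $\seq_{t,t[-1]}^*$.
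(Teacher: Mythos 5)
The paper does not actually prove Proposition~\ref{prop:swap_deg_codeg}: it is cited verbatim from \cite[Propositions~3.3.9 and~3.3.10]{qin2019bases}, so there is no in-paper argument against which to compare yours. (You are right that the subscript on $\succeq$ should be $t[-1]$, not $t$; that is a typo in the statement.) Evaluating your blind proposal on its own terms, I see two issues.

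The central step of part (1) is the cone identity $\psi(\tB(t)\,\Nufv^{\geq 0}(t)) = -\tB(t[-1])\,\Nufv^{\geq 0}(t[-1])$, and you explicitly leave it unproven, calling it ``the main obstacle.'' This is not a side detail: it is essentially the whole content of (1), and it is genuinely special to the shift sequence. For a single mutation $t' = \mu_j t$ the columnwise relation fails --- one finds $\psi_{t',t}(p_t^* e_j) = -p_{t'}^*e_j'$ but $\psi_{t',t}(p_t^* e_k) = p_{t'}^* e_k' + [-b_{jk}(t)]_+\,p_{t'}^* e_j'$ for $k \neq j$, so the image cone is $+\tB(t')\,\Nufv^{\geq 0}(t')$, not its negative. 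The sign flip and permutation appear only after composing the full reddening sequence $\seq_{t[-1],t}$, and establishing that is where the $C$-matrix and sign-coherence input is actually used. Your sketch correctly locates where the hard work is, but it does not do it, so as written (1) is assumed rather than proved.

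For part (2), the forward direction (copointed $\Rightarrow$ pointed image) is correct modulo (1): each $\hmu X(t)^m$ is $\psi(m)$-pointed, all these degrees lie $\preceq_{t[-1]} \psi(\eta)$, and only $m = \eta$ can contribute at $\psi(\eta)$. But the converse is imprecise. You claim the unique $\prec_{t[-1]}$-maximum of the image is of the form $\psi(m_0)$ and that ``part (1) then forces $m_0 \preceq_t m$ for every $m$''; neither step follows directly --- the first because of possible cancellation among the $\hmu X(t)^m$, and the second because (1) relates $m_0 \preceq_t m$ to $\psi(m) \preceq_{t[-1]} \psi(m_0)$, which you have not established for every $m$ (only $\psi(m_0)$ is known to dominate the \emph{support of the image}, not the individual $\psi(m)$). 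The correct route is to argue, exactly as in the forward direction, that every $\prec_t$-minimal $\eta'$ of $\supp Z$ survives with coefficient $c_{\eta'}$ at $X(t[-1])^{\psi(\eta')}$, that these $\psi(\eta')$ are pairwise incomparable, and that they exhaust the $\prec_{t[-1]}$-maximal elements of $\supp(\seq_{t,t[-1]}^*Z)$; pointedness of the image then forces a unique minimal $\eta'$ with $c_{\eta'} = 1$. That patch is small, but the argument as you wrote it has a real gap.
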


\begin{Def}[compatibility]
If $Z$ belongs to $\LP(t;t')\subset\LP(t)$, then $Z$ is said to
be compatibly pointed at $t$, $t'$ if it is $g$-pointed for some $g\in\Mc(t)$,
and $\seq_{t,t'}^{*}Z$ is $\phi_{t',t}g$-pointed.

If $Z$ belongs to $\qUpClAlg(t)\subset\LP(t)$, then $Z$ is said
to be compatibly pointed at $\Delta^{+}$ if it is compatibly pointed
at $t$, $t'$ for any $t'\in\Delta^{+}$.

Let $S$ denote a set consisting of $g$-pointed functions $S_{g}\in\hLP(t)$
for distinct $g\in\Mc(t)$. If $S_{g}$ is compatibly pointed at $t$, $t'$
for each $g$, we say $S$ is compatibly pointed at $t$, $t'$, or the
pointed sets $S$ and $\seq_{t,t'}^{*}S$ are (degree) compatible.
\end{Def}

Thanks to \cite{DerksenWeymanZelevinsky09,gross2018canonical},
we know that any given cluster monomial is compatibly pointed at all
seeds. In particular, the degrees of its Laurent expansions at different
seeds are related by tropical transformations.

\begin{Def}[codegree tropical transformation]
For any seeds $t'=\mu_{k}t$, $k\in I_{\ufv}$, we define the codegree
tropical transformation $\coTrop_{t',t}\colon\Mc(t)\simeq\Mc(t')$ as such
that, for any $g=(g_{i})_{i\in I}\in\Mc(t)\simeq\Z^{I}$, its image
$\coTrop_{t',t}g=(g'_{i})_{i\in I}\in\Mc(t')\simeq\Z^{I}$ is given
by
\begin{gather*}
g'_{i} = \begin{cases}
-g_{k}, & i=k,\\
g_{i}-b_{ik}[g_{k}]_{+}, & i\neq k,\quad b_{ik}\leq0,\\
g_{i}-b_{ik}[-g_{k}]_{+}, & i\neq k,\quad b_{ik}>0.
\end{cases}
\end{gather*}
\end{Def}

In general, we define the codegree tropical transformation $\coTrop_{t',t}\colon\Mc(t)\simeq\Mc(t')$
as the composition of the codegree tropical transformations for adjacent
seeds along the mutation sequence~$\seq$ from $t$ to $t'$.

Let us justify our definition of the codegree tropical transformation.

To any given seed $t=\big(\tB,\Lambda,(X_{i})_{i\in I}\big)$, we associate
the opposite seed defined to be $\iota(t):=t^{\op}:=\big({-}\tB,-\Lambda,(X_{i})_{i\in I}\big)$.
Then \cite[Lemma~2.2.5]{qin2019bases} implies that, for any mutation
sequence $\seq$, we have $(\seq t)^{\op}=\seq(t^{\op})$.

Let us define $\iota\colon\Mc(t)\simeq\Mc(t^{\op})$ as an isomorphism
on $\Z^{I}$ such that $\iota(f_{i}(t))=\iota(f_{i}(t^{\op}))$. Correspondingly,
by defining $\iota(X^{m})=X^{m}$, we obtain natural $\kk$-algebra
anti-isomorphisms
\begin{gather}
\iota \colon\ \LP(t)\simeq\LP(t^{\op}),\label{eq:opposite_LP}
\\
\iota \colon\ \hLP(t)\simeq\tLP(t^{\op}),\label{eq:opposite_hLP}
\\
\iota \colon\ \tLP(t)\simeq\hLP(t^{\op}).\nonumber
\end{gather}
Notice that $\iota\colon\LP(t)\simeq\LP(t^{\op})$ induces an anti-isomorphism
$\iota\colon\cF(t)\simeq\cF(t^{\op})$.

For any given $k\in I_{\ufv}$, we have $\mu_{k}(t^{\op})=(\mu_{k}t)^{\op}$.
It is straightforward to check the commutativity of the following
diagram:
\begin{gather}
\begin{array}{ccc}
\cF(t) & \xrightarrow{\iota} & \cF(t^{\op})\\
\uparrow\mu_{k}^{*} & & \uparrow\mu_{k}^{*}\\
\cF(\mu_{k}t) & \xrightarrow{\iota} & \cF(\mu_{k}(t^{\op})).
\end{array}\label{eq:comm_mutation_op}
\end{gather}
In particular, $\iota(\mu_{k}^{*}X_{i}(\mu_{k}t))=\mu_{k}^{*}(\iota X_{i}(\mu_{k}t))$
is given by $X_{i}(t^{\op})$ if $i\neq k$, or
\[
X(t^{\op})^{-f_{k}(t^{\op})+\sum_{j}[-b_{jk}]_{+}f_{j}(t^{\op})}+X(t^{\op})^{-f_{k}(t^{\op})+\sum_{i}[b_{ik}]_{+}f_{i}(t^{\op})}
\]
 if $i=k$.

Notice that $Y(t)^{n}=X^{\tB n}$ while $Y(t^{\op})^{n}=X^{-\tB n}$.
It follows that $Z\in\hLP(t)$ is $g$-pointed if and only if $\iota Z\in\tLP(t^{\op})$
is $g$-copointed. We have the following result.

\begin{Lem}
Let there be given seeds $t'=\seq_{t',t}t$. Then the codegree tropical
transformation $\coTrop_{t',t}\colon\Mc(t)\simeq\Mc(t')$ equals the composition
$\Mc(t)\xrightarrow{\iota}\Mc(t^{\op})\xrightarrow{\phi_{(t')^{\op},t^{\op}}}\Mc((t')^{\op})\xrightarrow{\iota}\Mc(t')$.
In particular, it is independent of the choice of $\seq_{t',t}$.
\end{Lem}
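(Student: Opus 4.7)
My plan is to reduce the statement to the one-step case $t' = \mu_k t$, $k \in I_{\ufv}$, and then to invoke the already-established independence for the degree tropical transformation $\phi$. Since $\coTrop_{t',t}$ is defined as the composition of one-step codegree tropical transformations along any mutation sequence $\seq_{t',t}$, and since $(\mu_k t)^{\op} = \mu_k(t^{\op})$ (noted in the paragraph preceding the statement), the map $\phi_{(t')^{\op},t^{\op}}$ likewise composes as one-step degree tropical transformations along the same sequence viewed on opposite seeds. The two occurrences of $\iota$ at the beginning and end of the claimed composition, combined with the identity $\iota \circ \iota = \id$ at each intermediate seed, reduce the identification at the level of a full sequence to the corresponding identification at each single mutation step. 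Once the one-step identity is verified, the final assertion of independence from the choice of $\seq_{t',t}$ follows directly from the independence of $\phi_{(t')^{\op},t^{\op}}$, which was recalled after the definition of $\phi$ via \cite{gross2013birational}.

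For the single mutation step, the verification is a direct comparison of the two explicit piecewise formulas. Write $h = \iota(g) = g$ as an element of $\Z^I$. The opposite seed $t^{\op}$ has $B$-matrix entries $b_{ij}^{\op} = -b_{ij}$, so applying the defining formula of $\phi_{(\mu_k t)^{\op}, t^{\op}}$ to $h$ gives $h'_k = -h_k = -g_k$, while in the remaining coordinates the sign condition $b_{ik}^{\op} \geq 0$ becomes $b_{ik} \leq 0$ and the coefficient $b_{ik}^{\op} = -b_{ik}$ appears in front of $[h_k]_+ = [g_k]_+$; symmetrically, the branch $b_{ik}^{\op} < 0$ becomes $b_{ik} > 0$ with coefficient $-b_{ik}$ in front of $[-g_k]_+$. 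Applying the final $\iota$ (which is the identity on underlying vectors in $\Z^I$) yields exactly
\begin{gather*}
g'_i = \begin{cases} -g_k, & i = k, \\ g_i - b_{ik}[g_k]_+, & i \neq k,\ b_{ik} \leq 0, \\ g_i - b_{ik}[-g_k]_+, & i \neq k,\ b_{ik} > 0, \end{cases}
\end{gather*}
which matches the defining formula of $\coTrop_{\mu_k t, t}$ on the nose.

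The main obstacle, though essentially bookkeeping, is keeping careful track of how the anti-isomorphism $\iota$ interacts with the sign flip on the $B$-matrix: the two piecewise formulas differ both in the direction of the inequality on $b_{ik}$ and in the sign of the coefficient, and one must check that these two differences match up consistently under $\iota$-conjugation rather than accidentally cancelling or compounding. The commutative diagram \eqref{eq:comm_mutation_op} and the observation that $Y(t^{\op})^n = X^{-\tB n}$ (so that $g$-pointedness on $t$ corresponds to $g$-copointedness on $t^{\op}$) provide the conceptual explanation for why this comparison should work out, and reduce the check to the one-line computation above.
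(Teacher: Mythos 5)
Your proof is correct and takes essentially the same approach as the paper: reduce to the single-mutation case via the commutativity of $\iota$ with mutations (and the telescoping $\iota\circ\iota=\id$), then verify the one-step identity by comparing the piecewise formulas for $\coTrop_{\mu_k t, t}$ and $\iota\circ\phi_{(\mu_k t)^{\op},t^{\op}}\circ\iota$ directly; the paper simply states that this last check "follows from definition" where you carry it out explicitly, and your sign bookkeeping (flipping both the inequality direction and the sign of $b_{ik}$ under $\iota$-conjugation) is correct.
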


\begin{proof}
By the commutativity between $\iota$ and mutations, it suffices to
check the claim for adjacent seeds $t'=\mu_{k}t$, which follows from
definition.
\end{proof}

Notice that we have $\LP(t;t')=\iota\LP(t^{\op};(t')^{\op})$ and
$\qUpClAlg(t)=\iota(\qUpClAlg(t^{\op}))$ by the commutativity between
$\iota$ and mutations.

\begin{Def}[codegree compatibility]
If $Z$ belongs to $\LP(t;t')\subset\LP(t)$, then $Z$ is said to
be compatibly copointed at $t$, $t'$ if it is $\eta$-copointed for
some $\eta\in\Mc(t)$, and $\seq_{t,t'}^{*}Z$ is $\coTrop_{t',t}\eta$-copointed.

If $Z$ belongs to $\qUpClAlg(t)\subset\LP(t)$, then $Z$ is said
to be compatibly copointed at $\Delta^{+}$ if it is compatibly copointed
at $t$, $t'$ for any $t'\in\Delta^{+}$.

Let $S$ denote a set consisting of $\eta$-copointed elements $S^{\eta}\in\tLP(t)$
for distinct $\eta\in\Mc(t)$. If~$S^{\eta}$ are compatibly copointed
at $t$, $t'$ for all $\eta$, we say $S$ is compatibly copointed at
$t$, $t'$, or the copointed sets $S$ and $\seq_{t,t'}^{*}S$ are (codegree)
compatible.
\end{Def}

\begin{Rem}
We refer the reader to \cite[Section 3.5]{kashiwara2019laurent} for
a categorical view. In their setting, to study cluster algebras arising
from quantum unipotent cells, one considers certain module category
of the corresponding quiver Hecke algebras and localize it at the
simple objects corresponding to the frozen variables. Then the degrees
and codegrees, together with the tropical transformations, can be
calculated by taking dual objects in the localized module category.
\end{Rem}

\subsection{Injective-reachability and distinguished functions\label{subsec:Injective-reachability}}

Let $\sigma$ denote a permutation of $I_{\ufv}$. For any mutation
sequence $\seq=\mu_{k_{r}}\cdots\mu_{k_{1}}$, we define $\sigma\seq=\mu_{\sigma k_{r}}\cdots\mu_{\sigma k_{1}}$.

Let $\pr_{I_{\ufv}}$ and $\pr_{I_{\fv}}$ denote the natural projection
from $\Z^{I}$ to $\Z^{I_{\ufv}}$ and $\Z^{I_{\fv}}$ respectively.

\begin{Def}[{\cite[Definition 5.1.1]{qin2017triangular}}]\label{def:inj_reachable}
A seed $t$ is said to be injective-reachable if there exists a mutation
sequence $\seq=\seq_{t',t}$ and a permutation $\sigma$ of $I_{\ufv}$,
such that the seed $t'=\seq_{t',t}t$ satisfies $b_{\sigma i,\sigma j}(t')=b_{ij}(t)$
for $i,j\in I_{\ufv}$ and, for any $k\in I_{\ufv}$,
\begin{gather}
\deg^{t}\seq_{t',t}^{*}X_{\sigma k}(t') = -f_{k}+u_{k}\label{eq:deg_inj}
\end{gather}
for some $u_{k}\in\Z^{I_{\fv}}$.

In this case, we denote $t'=t[1]$ and say it is shifted from $t$
(by $[1]$) with the permutation~$\sigma$. Similarly, we denote $t=t'[-1]$
and say it is the shifted from $t'$ (by $[-1]$) with the permutation~$\sigma^{-1}$.
\end{Def}

We recall that, up to a permutation of vertices, a seed is determined
by the degrees (exten\-ded $g$-vectors) of its cluster variables, see
\cite{gross2018canonical} for an interpretation in terms of chambers.
In~parti\-cu\-lar, the shifted seed $t[1]$ is unique up to a permutation.

Let there be given an injective-reachable seed $t$. Recursively,
we construct a chain of seeds $\{t[d]\,|\, d\in\Z\}$ called an injective-reachable
chain, such that $t[d]=\big(\sigma^{d-1}\seq\big)t[d-1]$, see \cite[Definition~5.2.1]{qin2017triangular}.
In particular, we have $t=\big(\sigma^{-1}\seq\big)t[-1]$.

We denote $I_{k}(t)=\seq_{t[1],t}^{*}X_{\sigma k}(t[1])$ and $P_{k}(t)=\seq_{t[-1],t}^{*}X_{\sigma^{-1}(k)}(t[-1])$.
For any $d\in\N^{I_{\ufv}}$, define the cluster monomial $I(t)^{d}:=\big[\prod_{k}I_{k}(t)^{d_{k}}\big]^{t}$
and $P(t)^{d}:=\big[\prod_{k}P_{k}(t)^{d_{k}}\big]^{t}$.

Since a quantum cluster monomial is pointed, it is also copointed
by \cite{FominZelevinsky07} (we can also see this using the map $\iota$).
It follows that $I(t)^{d}=\big\{\prod_{k}I_{k}(t)^{d_{k}}\big\}^{t}$ and
$P(t)^{d}=\big\{\prod_{k}P_{k}(t)^{d_{k}}\big\}^{t}$.

Notice that if $t$ is injective-reachable, then so is any seed $t'\in\Delta^{+}$.
Such property is equivalent to the existence of a green to red sequence.
See \cite{qin2017triangular,qin2019bases} for more details.

For any $g=(g_{i})_{i\in I}\in\Z^{I}\simeq\Mc(t)$, denote $[g]_{+}=([g_{i}]_{+})_{i\in I}$.
We have the following $g$-pointed element in $\LP(t)$:
\begin{gather*}
\Inj_{g}^{t} = \big[p_{g}*X(t)^{[g]_{+}}*I(t)^{[-\pr_{I_{\ufv}}g]_{+}}\big]^{t}
\end{gather*}
for some frozen factor $p_{g}\in\cRing$. Define the following set
of distinguished pointed functions
\begin{gather*}
\Inj^{t} := \big\{\Inj_{g}^{t}\,|\, g\in\Mc(t)\big\}.
\end{gather*}

Denote $t'=t[1]$. By~\eqref{eq:deg_inj}, the linear map $\psi_{t,t'}:\Mc(t')\simeq\Mc(t)$
is determined by
\begin{gather*}\begin{split}
&\psi_{t,t'}(f_{\sigma k}') = -f_{k}+u_{k},\qquad u_{k}\in\Z^{I_{\fv}},\quad k\in I_{\ufv},
\\
&\psi_{t,t'}(f_{i}') = f_{i},\qquad i\in I_{\fv}.\end{split}
\end{gather*}
Using Proposition~\ref{prop:swap_deg_codeg}, we deduce that
\begin{gather}
\codeg^{t[1]}P_{\sigma k}(t[1]) = \codeg^{t[1]}\seq_{t,t[1]}^{*}X_{k}(t)=\psi_{t,t[1]}^{-1}f_{k}
 = -f_{\sigma k}(t[1])+u_{k}. \label{eq:codeg_proj_shift}
\end{gather}
Notice that~\eqref{eq:codeg_proj_shift} appears in \cite[equation~(18)]{qin2017triangular}
as an assumption. Replacing $t$ by $t[-1]$ in the above argument,
we obtain
\begin{gather}
\codeg^{t}P_{\sigma k}(t) = -f_{\sigma k}+u_{k}'\label{eq:codeg_proj}
\end{gather}
for any $k\in I_{\ufv}$ and some $u_{k}'\in\Z^{I_{\fv}}$.

Correspondingly, for any $\eta\in\Z^{I}\simeq\Mc(t)$, we have the
following $\eta$-copointed element in~$\LP(t)$:
\begin{gather*}
\Proj^{t,\eta} = \big\{P(t)^{[-\pr_{I_{\ufv}}\eta]_{+}}*X(t)^{[\eta]_{+}}*p^{\eta}\big\}^{t}
\end{gather*}
for some frozen factor $p^{\eta}\in\cRing$. Define the following
set of distinguished copointed functions
\begin{gather*}
\Proj^{t} := \big\{\Proj^{t,\eta}\,|\, \eta\in\Mc(t)\big\}.
\end{gather*}

The two kinds of distinguished functions are related by the following
result. At a categorical level, it can be viewed as the duality between
injective representations and projective representations for a pair
of opposite quivers, see \cite[Section 5.3]{qin2017triangular} for
more discussion.

\begin{Lem}
Denote $\seq=\seq_{t[1],t}$. The following claims are true.
\begin{enumerate}\itemsep=0pt
\item[$(1)$] For any $k\in I_{\ufv}$, we have $\iota P_{k}(t)=I_{k}(t^{\op})$.

\item[$(2)$] We have $t[-1]^{\op}=(t^{\op})[1]=\big(\sigma^{-1}\seq\big)^{-1}t^{\op}$,
which is shifted from $t^{\op}$ with the permutation~$\sigma^{-1}$.

\item[$(3)$] We have $t[1]^{\op}=(t^{\op})[-1]=\seq t^{\op}$, which is shifted
from $t^{\op}$ with the permutation $\sigma$.
\end{enumerate}
\end{Lem}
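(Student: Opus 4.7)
The plan is to exploit the commutative square~\eqref{eq:comm_mutation_op} and push every assertion through the anti-isomorphism $\iota$. Iterating~\eqref{eq:comm_mutation_op} along any mutation sequence $\seq$ yields both the seed-level identity $(\seq t)^{\op} = \seq(t^{\op})$ and the fraction-field identity $\iota \circ \seq_{t',t}^{*} = \seq_{(t')^{\op},t^{\op}}^{*} \circ \iota$. Together with Proposition~\ref{prop:swap_deg_codeg}, which interchanges pointedness and copointedness and reverses the dominance order, this should transport every statement about $t[1]$ or $t[-1]$ into a corresponding statement about $t^{\op}[-1]$ or $t^{\op}[1]$.

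I would prove (2) first. The set-theoretic identity $t[-1]^{\op} = (\sigma^{-1}\seq)^{-1}t^{\op}$ is immediate from the iterated commutativity, since $t[-1] = (\sigma^{-1}\seq)^{-1}t$ by definition. It remains to check that this seed realizes the shift $(t^{\op})[1]$ with permutation $\sigma^{-1}$. The $B$-matrix condition $b_{\sigma^{-1}i,\sigma^{-1}j}(t[-1]^{\op}) = b_{ij}(t^{\op})$ follows from the sign flip $\tB(t^{\op}) = -\tB(t)$ applied to the known shift relation $b_{\sigma i,\sigma j}(t) = b_{ij}(t[-1])$. For the degree condition~\eqref{eq:deg_inj} with permutation $\sigma^{-1}$, the commutative diagram gives
\[
\seq_{t[-1]^{\op},t^{\op}}^{*} X_{\sigma^{-1}k}\big(t[-1]^{\op}\big) = \iota\big(\seq_{t[-1],t}^{*} X_{\sigma^{-1}k}(t[-1])\big) = \iota P_k(t).
\]
Since $P_k(t)$ is a copointed Laurent polynomial and $\iota$ converts codegree into degree, this image is $(\codeg^{t} P_k(t))$-pointed in $\LP(t^{\op})$; by~\eqref{eq:codeg_proj} (after the mild reindexing $k \leftrightarrow \sigma k$), the relevant codegree is $-f_k + (\text{frozen})$, as required.

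Claim (3) I would deduce by invoking (2) with $t$ replaced by $t^{\op}$: since the shift permutation of $t^{\op} \to t^{\op}[1]$ is $\sigma^{-1}$ by (2) itself, applying (2) to $t^{\op}$ yields $(t^{\op})[-1]^{\op} = (t^{\op})^{\op}[1] = t[1]$ with permutation $(\sigma^{-1})^{-1} = \sigma$, which after taking $\op$ rearranges to $t[1]^{\op} = (t^{\op})[-1]$ with permutation $\sigma$; the mutation-sequence description $t[1]^{\op} = \seq t^{\op}$ is already available from the first step. For claim (1), the commutativity of $\iota$ with mutations gives
\[
\iota P_k(t) = \iota\big(\seq_{t[-1],t}^{*} X_{\sigma^{-1}k}(t[-1])\big) = \seq_{t[-1]^{\op},t^{\op}}^{*} X_{\sigma^{-1}k}\big(t[-1]^{\op}\big);
\]
substituting $t[-1]^{\op} = t^{\op}[1]$ via (2) then converts the right-hand side into $\seq_{t^{\op}[1],t^{\op}}^{*} X_{\sigma^{-1}k}(t^{\op}[1]) = I_k(t^{\op})$.

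The main obstacle I anticipate is not conceptual but bookkeeping: keeping straight the permutations $\sigma^{\pm 1}$, the shift between $k$ and $\sigma k$ in~\eqref{eq:deg_inj} and~\eqref{eq:codeg_proj}, and the order reversal $g' \preceq_{t^{\op}} g \Longleftrightarrow g \preceq_t g'$. Once these conventions are pinned down, each step reduces to a direct application of~\eqref{eq:comm_mutation_op},~\eqref{eq:codeg_proj}, and Proposition~\ref{prop:swap_deg_codeg}.
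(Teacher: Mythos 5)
Your proof is correct and takes essentially the same route as the paper's: push everything through $\iota$ using the commutativity~\eqref{eq:comm_mutation_op}, identify the degree of the image of $P_k(t)$ via~\eqref{eq:codeg_proj} together with the fact that $\iota$ converts copointedness into pointedness, and read off the shift condition~\eqref{eq:deg_inj} for $t^{\op}$. The only structural difference is the ordering: the paper proves~(1) first, directly (observing $\iota P_k(t)$ is a cluster variable pointed at $-f_k+u$ by~\eqref{eq:codeg_proj}, which forces it to be $I_k(t^\op)$), then~(2) and~(3), whereas you establish~(2) first and recover~(1) as a formal corollary of the identification $t[-1]^{\op}=t^{\op}[1]$ with permutation $\sigma^{-1}$. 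Both orderings are sound and the underlying computation is the same.
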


\begin{proof}$(1)$ Recall that $\iota P_{k}(t)$ is a quantum cluster variable contained
in $\LP(t^{\op})$. By~\eqref{eq:codeg_proj}, $\iota P_{k}(t)$ is
pointed at $-f_{k}+u$ for some $u\in\Z^{I_{\fv}}$. The claim follows.

$(2)$ Denote $\seqnu=\big(\sigma^{-1}\seq\big)^{-1}$ for simplicity. By the
commutativity between mutations and $\iota$, we have $t[-1]^{\op}=(\seqnu t)^{\op}=\seqnu t^{\op}$.

The seed $t[-1]^{\op}$ has the principal $B$-matrix given by
$b_{ij}(t[-1]^{\op})=-b_{ij}(t[-1])=-b_{\sigma i,\sigma j}$,
$i,j\in I_{\ufv}$. Using the commutativity relation~\eqref{eq:comm_mutation_op}
between $\iota\colon \cF(t)\simeq\cF(t^{\op})$, $\iota\colon\cF(\seqnu t)\simeq\cF((\seqnu t)^{\op})$
and mutations, its cluster variables have the following Laurent expansion
in $\LP(t^{\op})$:
\begin{gather*}
\big(\seqnu\big)^{*}\big(X_{\sigma^{-1}k}\big(\seqnu t^{\op}\big)\big)
=\iota\big(\big(\seqnu\big)^{*}\iota^{-1}X_{\sigma^{-1}k}\big(\seqnu t^{\op}\big)\big)
=\iota\big(\big(\seqnu\big)^{*}X_{\sigma^{-1}k}\big(\seqnu t\big)\big)=\iota\big(P_{k}(t)\big),
\end{gather*}
$k\in I_{\ufv}$ , which are pointed at $-f_{k}+u$, $u\in\Z^{I_{\fv}}$.
It follows that $t[-1]^{\op}$ is a shifted seed $t^{\op}[1]$ with
the permutation $\sigma^{-1}$.

(3) Notice that $t[1]^{\op}=(\seq t)^{\op}=\seq(t^{\op})$ by the
commutativity between mutations and $\iota$. Since $\seq^{-1}t^{\op}=t^{\op}[1]$
with the permutation $\sigma^{-1}$ by (2), we have $\seq t^{\op}=t^{\op}[-1]$
with the permutation $\sigma$.
\end{proof}

\begin{Eg}
Let us continue Example~\ref{eg:A2_cluster}. Ignore the Lambda matrices
for simplicity and set $v=1$. Then the cluster algebra have five
cluster variables in $\LP(t_{0})$:
\begin{gather*}
X_{1},\qquad
X_{2},\qquad
P_{2} = \frac{X_{1}+1}{X_{2}},\qquad
P_{1}=I_{2}=\frac{X_{1}+1+X_{2}}{X_{1}X_{2}},\qquad
I_{1}=\frac{1+X_{2}}{X_{1}}.
\end{gather*}

Take the seed $t_{0}=(B,(X_{1},X_{2}))$. Choose the mutation sequence
$\seq=\mu_{2}\mu_{1}\mu_{2}$ (read from right to left). Then
\begin{gather*}
t_{0}[1] : =\seq t_{0}=(-B,(I_{2},I_{1}))
\end{gather*}
is shifted from $t_{0}$ by $[1]$ with the unique non-trivial permutation
$\sigma$ of $I=\{1,2\}$. Notice that $\big(\sigma^{-1}\seq\big)^{-1}=\mu_{1}\mu_{2}\mu_{1}$.
Similarly,
\begin{gather*}
t_{0}[-1] : =\big(\sigma^{-1}\seq\big)^{-1}t_{0}=(-B,(P_{2},P_{1}))
\end{gather*}
is shifted from $t_{0}$ by $[-1]$ with $\sigma^{-1}$.

It follows that
\begin{gather*}
t_{0}^{\op} = (-B,(\iota X_{1},\iota X_{2}))=\big({-}B,(X_{1}(t_{0}^{\op}),X_{2}(t_{0}^{\op}))\big),
\\
t_{0}[1]^{\op} = (B,(\iota I_{2},\iota I_{1}))=\big(B,(P_{2}(t_{0}^{\op}),P_{1}(t_{0}^{\op}))\big)=\seq t_{0}^{\op},
\\
t_{0}[-1]^{\op} = (B,(\iota P_{2},\iota P_{1}))=\big(B,(I_{2}(t_{0}^{\op}),I_{1}(t_{0}^{\op}))\big)=\big(\sigma^{-1}\seq\big)^{-1}t_{0}^{\op}.
\end{gather*}
We see that $t_{0}[-1]^{\op}$ is shifted from $t_{0}^{\op}$ by $[1]$
with the permutation $\sigma^{-1}$, which we denote by $t_{0}[-1]^{\op}=t_{0}^{\op}[1]$,
and $t_{0}[1]^{\op}$ is shifted from $t_{0}^{\op}$ by $[-1]$ with
the permutation $\sigma$, which we denote by $t_{0}[1]^{\op}=t_{0}^{\op}[-1]$.
\end{Eg}

\begin{Lem}[substitution]\label{lem:substitution}\quad
\begin{enumerate}\itemsep=0pt
\item[$(1)$] Assume that $\big[X(t)^{d}*I(t)^{d'}\big]^{t}$
is $(\prec_{t},\mm)$-unitriangular to $\Inj^{t}$ for any $d\in\N^{I_{\ufv}}\oplus\Z^{I_{\fv}}$
and \mbox{$d'\in\N^{I_{\ufv}}$}. If $Z$ is $(\prec_{t},\mm)$-unitriangular
to $\Inj^{t}$, then the normalized products $\big[X(t)^{d}*Z*I(t)^{d'}\big]^{t}$
are $(\prec_{t},\mm)$-unitriangular to $\Inj^{t}$ too {\rm\cite[Lemma~6.2.4]{qin2017triangular}}.

\item[$(2)$] Assume that $\big\{P(t)^{d'}*X(t)^{d}\big\}^{t}$ is codegree $(\succ_{t},\mm)$-unitriangular
to $\Proj^{t}$ for any $d\in\N^{I_{\ufv}}\oplus\Z^{I_{\fv}}$ and
$d'\in\N^{I_{\ufv}}$. If $Z$ is codegree $(\succ_{t},\mm)$-unitriangular
to $\Proj^{t}$, then the codegree normalized products $\big\{P(t)^{d'}*Z*X(t)^{d}\big\}^{t}$
are codegree $(\succ_{t},\mm)$-unitriangular to $\Proj^{t}$ too.
\end{enumerate}
\end{Lem}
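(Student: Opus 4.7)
Part (1) is cited from \cite[Lemma~6.2.4]{qin2017triangular}, so my plan is to deduce part (2) from part (1) via the anti-isomorphism $\iota\colon \LP(t) \simeq \LP(t^{\op})$ introduced earlier in this section.

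The key observation is that $\iota$ reverses the dominance order. Indeed, since $\tB(t^{\op}) = -\tB(t)$, we have $p^{*,\op} = -p^{*}$, so $m \preceq_{t^{\op}} m'$ if and only if $m' \preceq_{t} m$. Consequently, the extension $\iota\colon \tLP(t) \simeq \hLP(t^{\op})$ swaps degrees and codegrees: a $\eta$-copointed element $Z \in \tLP(t)$ corresponds to an $\eta$-pointed element $\iota Z \in \hLP(t^{\op})$, with matching coefficients at the distinguished (co)degrees. In particular, codegree normalization corresponds to degree normalization, namely $\iota \{Z\}^{t} = [\iota Z]^{t^{\op}}$, and codegree $(\succ_{t}, \mm)$-unitriangularity corresponds to degree $(\prec_{t^{\op}}, \mm)$-unitriangularity (noting that $\iota$ fixes the scalar $v$, hence preserves $\mm$).

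The next step is to verify that $\iota$ identifies the relevant distinguished sets and products. Using the already established identity $\iota P_{k}(t) = I_{k}(t^{\op})$, the anti-multiplicativity of $\iota$, and a matching choice of frozen factors, one obtains $\iota(\Proj^{t,\eta}) = \Inj_{\eta}^{t^{\op}}$, whence $\iota(\Proj^{t}) = \Inj^{t^{\op}}$ as sets. Similarly, since $\iota$ reverses the order of multiplication,
\[
\iota \{P(t)^{d'} * X(t)^{d}\}^{t} = [X(t^{\op})^{d} * I(t^{\op})^{d'}]^{t^{\op}}.
\]
Therefore the hypothesis of part (2) translates into the hypothesis of part (1) for the opposite seed $t^{\op}$, and the hypothesis that $Z$ is codegree $(\succ_{t}, \mm)$-unitriangular to $\Proj^{t}$ translates into $\iota Z$ being $(\prec_{t^{\op}}, \mm)$-unitriangular to $\Inj^{t^{\op}}$. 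Applying part (1) at $t^{\op}$, we conclude that $[X(t^{\op})^{d} * \iota Z * I(t^{\op})^{d'}]^{t^{\op}}$ is $(\prec_{t^{\op}}, \mm)$-unitriangular to $\Inj^{t^{\op}}$; pulling back by $\iota^{-1}$ (which again reverses the order of multiplication) yields the desired claim for $\{P(t)^{d'} * Z * X(t)^{d}\}^{t}$.

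The main obstacle is the careful bookkeeping of the frozen factors $p^{\eta}$ and the scalar normalizations under $\iota$. Since $\iota$ swaps maxima and minima of Laurent supports, one must verify that the distinguished building blocks transform exactly into one another, not merely up to undetermined scalars. This amounts to checking the identifications $\iota P_{k}(t) = I_{k}(t^{\op})$ and $\iota X(t)^{m} = X(t^{\op})^{m}$, both of which are built into the definitions, together with the uniqueness of the (co)pointed normalization, so the remaining verification is routine.
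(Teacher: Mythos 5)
Your proposal is correct and follows precisely one of the two routes the paper itself indicates: the paper's proof merely remarks that (2) can be obtained either by mimicking the proof of (1) or by deducing it from (1) via $\iota$, and you have carried out the second route in full detail, correctly tracking the order-reversal, the swap of (co)degrees, the anti-multiplicativity, the identification $\iota(\Proj^{t})=\Inj^{t^{\op}}$, and the frozen-factor bookkeeping.
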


\begin{proof}
(1) has been proved in \cite{qin2017triangular}. We can prove (2)
using similar arguments as those for (1), or deduce (2) from (1) by
using the map $\iota$.
\end{proof}

We have the following relation between degree and codegree tropical
transformations.

\begin{Prop}\label{prop:deg_codeg_trop_trans}

For any $t,t'\in\Delta^{+}$, the following diagram commutes:
\begin{gather}
\begin{array}{ccc}
\Mc(t[1]) & \xrightarrow{\psi_{t,t[1]}} & \Mc(t)\\
\downarrow\coTrop_{t'[1],t[1]} & & \downarrow\phi_{t',t}\\
\Mc(t'[1]) & \xrightarrow{\psi_{t',t'[1]}} & \Mc(t').
\end{array}
\end{gather}
\end{Prop}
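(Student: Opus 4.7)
The plan is to test the identity on quantum cluster monomials, invoking the Swap Proposition (Proposition~\ref{prop:swap_deg_codeg}) twice to compute the degree of a cluster monomial at $t'$ in two different ways, and then extend the identity piecewise-linearly to all of $\Mc(t[1])$. The test elements will be the monomials $M = X(t[1])^\eta$ for $\eta$ in the cluster-monomial cone $\N^{I_\ufv}\oplus\Z^{I_\fv}\subset\Mc(t[1])$; each such $M$ is simultaneously $\eta$-pointed and $\eta$-copointed in $\LP(t[1])$.

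First, I would compute $\deg^{t'}M$ along the right-then-down route. Since $t = t[1][-1]$, applying Proposition~\ref{prop:swap_deg_codeg}(2) with $t[1]$ in place of $t$ converts the copointedness $\codeg^{t[1]}M = \eta$ into the pointedness $\deg^{t}M = \psi_{t,t[1]}\eta$. Because cluster monomials are compatibly pointed at all seeds in $\Delta^+$ (by \cite{DerksenWeymanZelevinsky09,gross2018canonical,Tran09}), the degree at $t'$ is then $\phi_{t',t}(\psi_{t,t[1]}\eta)$. Next, I would compute $\deg^{t'}M$ along the down-then-right route. Applying the anti-isomorphism $\iota$, which intertwines mutations by~\eqref{eq:comm_mutation_op} and swaps pointedness with copointedness, the compatibility of pointed cluster monomials transports to compatibility of copointed ones, yielding $\codeg^{t'[1]}M = \coTrop_{t'[1],t[1]}\eta$. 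Now applying Proposition~\ref{prop:swap_deg_codeg}(2) with $t'[1]$ in place of $t$ gives $\deg^{t'}M = \psi_{t',t'[1]}(\coTrop_{t'[1],t[1]}\eta)$. Comparing the two expressions yields
\[
\phi_{t',t}(\psi_{t,t[1]}\eta) = \psi_{t',t'[1]}(\coTrop_{t'[1],t[1]}\eta)
\]
for every $\eta$ in the cluster-monomial cone at $t[1]$.

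To conclude, I would extend the identity to arbitrary $\eta\in\Mc(t[1])$. Both compositions are piecewise-linear maps $\Mc(t[1])\to\Mc(t')$ (since $\phi$ and $\coTrop$ are piecewise linear while $\psi$ is linear), and they agree on a full-dimensional cone. Applying the same cluster-monomial argument at an arbitrary seed $\tilde{t}\in\Delta^+$---using monomials $X(\tilde t)^\delta$ with $\delta\in\N^{I_\ufv}(\tilde t)\oplus\Z^{I_\fv}(\tilde t)$, whose degrees at $t[1]$ fill out the $\tilde t$-chamber $\phi_{t[1],\tilde t}\bigl(\N^{I_\ufv}(\tilde t)\oplus\Z^{I_\fv}(\tilde t)\bigr)$---propagates the equality across all $g$-vector chambers of the cluster complex at $t[1]$.

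The main obstacle is precisely this final extension: one must argue that the collection of $g$-vector cones of cluster monomials (from all seeds in $\Delta^+$) is rich enough to force equality of two piecewise-linear maps that already agree on one chamber. This would be automatic in the finite-type setting; in general, it can be handled either by appealing to density/sign-coherence of $g$-vectors, or, more robustly, by verifying the identity across a single mutation wall---reducing to $t' = \mu_k t$ and checking the identity on each sign chamber $\{g_k \geq 0\}$, $\{g_k \leq 0\}$ using the explicit formulas defining $\phi_{t',t}$ and $\coTrop_{t'[1],t[1]}$---and then iterating along a mutation sequence from $t$ to $t'$.
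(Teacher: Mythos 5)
Your cluster-monomial test is a valid and genuinely different idea from the paper's argument, but as written it leaves a real gap, one you yourself flag. The test establishes the identity $\phi_{t',t}\circ\psi_{t,t[1]} = \psi_{t',t'[1]}\circ\coTrop_{t'[1],t[1]}$ only on the cone $\N^{I_\ufv}\oplus\Z^{I_\fv}\subset\Mc(t[1])$, and both composite maps are piecewise linear, so agreement on one chamber does not force agreement everywhere. Your first suggested fix---density of $g$-vector cones over all seeds---does not hold in general: outside finite type the $g$-vector fan is not complete (there are regions, e.g.\ the ``imaginary'' cone, not swept out by any cluster chamber), so that route is not available in the generality of the proposition.

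Your second, ``more robust'' suggestion is essentially the paper's proof. The paper first reduces to adjacent seeds $t'=\mu_k t$ (noting $t'[1]=\mu_{\sigma k}t[1]$), observes that both $\phi_{t',t}$ and $\coTrop_{t'[1],t[1]}$ have a single breakpoint wall, and then explicitly verifies the identity on the test vectors $\pm f_{\sigma i}(t[1])$, $i\in I_\ufv$, using the exchange relation for the cluster variables $I_k(t)$, $I_k(t')$ at the shifted seeds. The two signs are exactly what your cluster-monomial cone fails to capture, since $\N^{I_\ufv}(t[1])$ lives entirely on one side of the wall $\{g_{\sigma k}=0\}$. Your approach can actually be salvaged in the adjacent-seed case by also testing on the cluster-monomial cone of $t'[1]$: since $\psi_{t,t[1]}$ sends $f_{\sigma j}(t[1])\mapsto -f_j+u_j$, the breakpoint walls of the two composites both equal $\{\eta_{\sigma k}=0\}$, and by sign coherence the $t[1]$- and $t'[1]$-cones are full-dimensional and lie on opposite sides of this wall, so agreement on both cones pins down both linear pieces. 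Iterating along a mutation sequence then handles general $t'$. But this full-dimensionality-and-opposite-sides observation is precisely the step your proposal defers; without it the proof is incomplete, and the paper's explicit check on $\pm f_{\sigma i}(t[1])$ is the cleaner and more direct way to close the same gap.
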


\begin{proof}
It suffices to check the claim for the case $t'=\mu_{k}t$, $k\in I_{\ufv}$.
Notice that, in this case, we~have $t'[1]=\mu_{\sigma k}(t[1])$ and
$\seq_{t,t'}^{*}I_{i}(t)=I_{i}(t')$ for $i\neq k$, see \cite[Proposition 5.1.4]{qin2017triangular}.

Notice that, all $u\in\Z^{I_{\fv}}$ are invariant under the maps
in the diagrams. In view of the piecewise linearity of $\phi_{t',t}$
and $\coTrop_{t'[1],t[1]}$, it remains to check the claim that, for
$i\in I_{\ufv}$,
\begin{gather*}
\phi_{t',t}\psi_{t,t[1]}(\pm f_{\sigma i}(t[1]))
= \psi_{t',t'[1]}\coTrop_{t'[1],t[1]}(\pm f_{\sigma i}(t[1])).
\end{gather*}

(i) By definition, for $i\neq k$ in $I_{\ufv}$, we have
\begin{gather*}
\phi_{t',t}\psi_{t,t[1]}(f_{\sigma i}(t[1])) =\deg^{t'}\seq_{t,t'}^{*}I_{i}(t)=\deg^{t'}I_{i}(t')
\end{gather*}
and also
\begin{gather*}
\psi_{t',t'[1]}\coTrop_{t'[1],t[1]}(f_{\sigma i}(t[1])) =\text{\ensuremath{\psi_{t',t'[1]}(f_{\sigma i}(t'[1])}})=\deg^{t'}I_{i}(t').
\end{gather*}
It follows that these two vectors in $\Mc(t')$ agree.

(ii) For the non-trivial case $i=k$, we have
\begin{gather*}
\phi_{t',t}\psi_{t,t[1]}(f_{\sigma k}(t[1])) =\deg^{t'}\seq_{t,t'}^{*}I_{k}(t).
\end{gather*}
Notice that $I_{k}(t)$ and $I_{k}(t')$ are related by an exchange
relation for the seeds $(t[1],t'[1])$. It~follows that we have
\begin{gather*}
\deg^{t'}\seq_{t,t'}^{*}I_{k}(t) = -\deg^{t'}I_{k}(t')+\sum_{i\in I_{\ufv}}[b_{ik}(t')]_{+}\deg^{t'}I_{i}(t')
+\sum_{s\in I_{\fv}}[b_{s,\sigma k}(t'[1])]_{+}f_{s},
\end{gather*}
see \cite[equation~(14)]{qin2017triangular}.

On the other hand, we have the following computation
\begin{gather*}
\psi_{t',t'[1]}\coTrop_{t'[1],t[1]}(f_{\sigma k}(t[1]))
\\ \qquad
{}= \psi_{t',t'[1]}\bigg({-}f_{\sigma k}(t'[1])+\sum_{i\in I_{\ufv}}[-b_{\sigma i,\sigma k}(t[1])]_{+}f_{\sigma i}(t'[1])+\sum_{s\in I_{\fv}}[-b_{s,\sigma k}(t[1])]_{+}f_{s}\bigg)
\\ \qquad
{}= -\deg^{t'}I_{k}(t')+\sum_{i\in I_{\ufv}}[-b_{\sigma i,\sigma k}(t[1])]_{+}\deg^{t'}I_{i}(t')+\sum_{s\in I_{\fv}}[-b_{s,\sigma k}(t[1])]_{+}f_{s}
\\ \qquad
{}= -\deg^{t'}I_{k}(t')+\sum_{i\in I_{\ufv}}[b_{ik}(t')]_{+}\deg^{t'}I_{i}(t')+\sum_{s\in I_{\fv}}[b_{s,\sigma k}(t'[1])]_{+}f_{s}.
\end{gather*}

The desired equality follows:
\begin{gather*}
\phi_{t',t}\psi_{t,t[1]}(f_{\sigma k}(t[1])) = \psi_{t',t'[1]}\coTrop_{t'[1],t[1]}(f_{\sigma k}(t[1])).
\end{gather*}

(iii) By~\eqref{eq:deg_inj} and the linearity of $\psi_{t,t[1]}$,
$\psi_{t',t'[1]}$, for $i\neq k$ in $I_{\ufv}$, we have
\begin{gather*}
\phi_{t',t}\psi_{t,t[1]}(-f_{\sigma i}(t[1])) =\phi_{t',t}(f_{i}(t)-u_{i})=f_{i}(t')-u_{i}
\end{gather*}
and also
\begin{gather*}
\psi_{t',t'[1]}\coTrop_{t'[1],t[1]}(-f_{\sigma i}(t[1])) =\text{\ensuremath{\psi_{t',t'[1]}(-f_{\sigma i}(t'[1])}}) =-\deg^{t'}I_{i}(t').
\end{gather*}
Then~\eqref{eq:deg_inj} implies that the two vectors in $\Mc(t')$
agree.

(iv) For the non-trivial case $i=k$, the linearity of $\psi_{t,t[1]}$
implies
\begin{gather*}
\psi_{t,t[1]}(-f_{\sigma k}(t[1])) =-\deg^{t}I_{k}(t).
\end{gather*}
Notice that $I_{k}(t)$ and $I_{k}(t')$ are related by an exchange
relation for the seeds $(t[1],t'[1])$. It~follows that we have
\begin{gather*}
\deg^{t}\seq_{t',t}^{*}I_{k}(t') = -\deg^{t}I_{k}(t)+\sum_{i\in I_{\ufv}}[b_{ik}(t)]_{+}\deg^{t}I_{i}(t)
 +\sum_{s\in I_{\fv}}[b_{s,\sigma k}(t[1])]_{+}f_{s},
\end{gather*}
see \cite[equation~(14)]{qin2017triangular}. Consequently, we get
\begin{gather*}
\psi_{t,t[1]}(-f_{\sigma k}(t[1])) =\deg^{t}\seq_{t',t}^{*}I_{k}(t')-\sum_{i\in I_{\ufv}}[b_{ik}(t)]_{+}\deg^{t}I_{i}(t)
 -\sum_{s\in I_{\fv}}[b_{s,\sigma k}(t[1])]_{+}f_{s}.
\end{gather*}

On the other hand, we compute that
\begin{gather*}
\psi_{t',t'[1]}\coTrop_{t'[1],t[1]}(-f_{\sigma k}(t[1]))
\\ \qquad
{}=\text{\ensuremath{\psi_{t',t'[1]}(f_{\sigma k}(t'[1])}}-\sum_{i\in I_{\ufv}}[b_{\sigma i,\sigma k}(t[1])]_{+}f_{\sigma i}(t'[1])-\sum_{s\in I_{\fv}}[b_{s,\sigma k}(t[1])]_{+}f_{s})
\\ \qquad
{}=\deg^{t'}I_{k}(t')-\sum_{i\in I_{\ufv}}[b_{\sigma i,\sigma k}(t[1])]_{+}\deg^{t'}I_{i}(t')-\sum_{s\in I_{\fv}}[b_{s,\sigma k}(t[1])]_{+}f_{s}
\\ \qquad
{}=\deg^{t'}I_{k}(t')-\sum_{i\in I_{\ufv}}[b_{ik}(t)]_{+}\deg^{t'}I_{i}(t')-\sum_{s\in I_{\fv}}[b_{s,\sigma k}(t[1])]_{+}f_{s}.
\end{gather*}
Further applying $\phi_{t,t'}$ to both sides, we obtain
\begin{gather*}
\phi_{t,t'}\psi_{t',t'[1]}\coTrop_{t'[1],t[1]}(-f_{\sigma k}(t[1]))
 \\ \qquad{}
 {}=\deg^{t}\seq_{t',t}^{*}I_{k}(t')-\sum_{i\in I_{\ufv}}[b_{ik}(t)]_{+}\deg^{t}I_{i}(t)-\sum_{s\in I_{\fv}}[b_{s,\sigma k}(t[1])]_{+}f_{s}.
\end{gather*}

Consequently, we have
\begin{gather*}
\psi_{t,t[1]}(-f_{\sigma k}(t[1])) = \phi_{t,t'}\psi_{t',t'[1]}\coTrop_{t'[1],t[1]}(-f_{\sigma k}(t[1])).
\end{gather*}
We obtain the claim observing that $\phi_{t,t'}=\phi_{t',t}^{-1}$.
\end{proof}

Consequently, we obtain a relation between the degree compatibility
and the codegree compatibility, which will be useful for studying
properties of double triangular bases (Proposition~\ref{prop:common_tri_basis_opposite}).

\begin{Prop}\label{prop:compatibily_pointed_copointed}
Let there be given seeds $t,t'\!\in\!\Delta^{+}$ and $Z\!\in\!\LP(t)\cap\allowbreak \seq_{t',t}^{*}\LP(t')\cap \allowbreak \seq_{t[1],t}^{*}\LP(t[1])\allowbreak \cap\seq_{t'[1],t}^{*}\LP(t'[1])$.
Then $Z$ is compatibly copointed at $t[1]$, $t'[1]$ if and only if
it is compatibly pointed at $t$, $t'$.
\end{Prop}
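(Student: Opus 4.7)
The plan is to deduce the proposition by two applications of the swap Proposition~\ref{prop:swap_deg_codeg}(2), one at the pair $(t[1],t)$ and one at the pair $(t'[1],t')$, combined with the commutative diagram of Proposition~\ref{prop:deg_codeg_trop_trans}. The conceptual point is that copointedness at a shifted seed corresponds to pointedness at the original seed via the linear isomorphism $\psi$, and that $\psi$ intertwines $\coTrop$ with $\phi$.

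First, I would check that the four containments imposed on $Z$ supply exactly the hypotheses needed for swap at both shifted seeds. From $Z\in\LP(t)\cap\seq_{t[1],t}^{*}\LP(t[1])$ one obtains $\seq_{t,t[1]}^{*}Z\in\LP(t[1];t)$, so swap yields the equivalence ``$\seq_{t,t[1]}^{*}Z$ is $\eta$-copointed if and only if $Z$ is $\psi_{t,t[1]}\eta$-pointed''. Using the composition identity $\seq_{t'[1],t'}^{*}\circ\seq_{t,t'[1]}^{*}=\seq_{t,t'}^{*}$, the remaining containments $Z\in\seq_{t',t}^{*}\LP(t')\cap\seq_{t'[1],t}^{*}\LP(t'[1])$ similarly place $\seq_{t,t'[1]}^{*}Z$ in $\LP(t'[1];t')$, and swap yields ``$\seq_{t,t'[1]}^{*}Z$ is $\eta'$-copointed if and only if $\seq_{t,t'}^{*}Z$ is $\psi_{t',t'[1]}\eta'$-pointed''.

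To conclude, set $g:=\psi_{t,t[1]}\eta$ and apply Proposition~\ref{prop:deg_codeg_trop_trans}:
\begin{gather*}
\psi_{t',t'[1]}\coTrop_{t'[1],t[1]}\eta=\phi_{t',t}\psi_{t,t[1]}\eta=\phi_{t',t}g.
\end{gather*}
Since $\psi_{t',t'[1]}$ is a bijection, the codegree compatibility $\eta'=\coTrop_{t'[1],t[1]}\eta$ is then equivalent to the degree compatibility $\psi_{t',t'[1]}\eta'=\phi_{t',t}g$. Combined with the two swap equivalences, this gives the claimed equivalence between compatible copointedness at $(t[1],t'[1])$ and compatible pointedness at $(t,t')$. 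The only real difficulty is tracking the directions of the various mutation birational maps; once that is settled, the proposition is essentially a formal consequence of the two propositions cited, so I anticipate no substantive obstacle.
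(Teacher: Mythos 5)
Your proposal is correct and follows essentially the same route as the paper: the paper also applies Proposition~\ref{prop:swap_deg_codeg} twice, once at the pair $(t[1],t)$ and once at $(t'[1],t')$, and then invokes the commutative square of Proposition~\ref{prop:deg_codeg_trop_trans} to intertwine $\coTrop_{t'[1],t[1]}$ with $\phi_{t',t}$ via $\psi_{t,t[1]}$ and $\psi_{t',t'[1]}$. Your verification that the four containments on $Z$ are exactly what is needed to place $\seq_{t,t[1]}^{*}Z$ in $\LP(t[1];t)$ and $\seq_{t,t'[1]}^{*}Z$ in $\LP(t'[1];t')$ is a point the paper leaves implicit, and your use of the bijectivity of $\psi_{t',t'[1]}$ to get a single biconditional instead of two separate directions is a minor streamlining.
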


\begin{proof}
By Proposition~\ref{prop:swap_deg_codeg}, $\seq_{t,t[1]}^{*}Z$ is
$\eta$-copointed in $\LP(t[1])$ if and only if $Z$ is $\psi_{t,t[1]}\eta$-pointed
in $\LP(t)$, and similar statements hold in $\LP(t'[1])$ and $\LP(t')$.
Let us explain how the claim follows from Proposition~\ref{prop:deg_codeg_trop_trans}.

First, assume that $Z$ is compatibly copointed at $t[1]$, $t'[1]$.
This means that there exists some $\eta$ such that $\seq_{t,t[1]}^{*}Z$
is $\eta$-copointed in $\LP(t[1])$ and $\seq_{t,t'[1]}^{*}Z$ is
$\phi_{t'[1],t[1]}^{\op}\eta$-copointed in~$\LP(t'[1])$. Then Proposition~\ref{prop:swap_deg_codeg} implies that $Z$ is $\psi_{t,t[1]}\eta$-pointed
in $\LP(t)$ and $\seq_{t,t'}^{*}Z=\seq_{t'[1],t'}^{*}\big(\seq_{t,t'[1]}^{*}Z\big)$
is $\psi_{t',t'[1]}\big(\phi_{t'[1],t[1]}^{\op}\eta\big)$-pointed in~$\LP(t')$.
In addition, Proposition~\ref{prop:deg_codeg_trop_trans} implies
that $\psi_{t',t'[1]}\big(\phi_{t'[1],t[1]}^{\op}\eta\big)=\phi_{t',t}(\psi_{t,t[1]}\eta)$.
Therefore, $Z$ is compatibly pointed at $t$, $t'$.

Conversely, assume that $Z$ is compatibly pointed at $t$, $t'$. This
means that there exists some~$g$ such that $Z$ is $g$-pointed in
$\LP(t)$ and $\seq_{t,t'}^{*}Z$ is $\phi_{t',t}g$-pointed in $\LP(t')$.
Then Proposition~\ref{prop:swap_deg_codeg} implies that $\seq_{t,t[1]}^{*}Z$
is $\psi_{t,t[1]}^{-1}g$-copointed in $\LP(t[1])$ and $\seq_{t,t'[1]}^{*}Z$
is $\psi_{t',t'[1]}^{-1}(\phi_{t',t}g)$-copointed in~$\LP(t'[1])$.
In addition, Proposition~\ref{prop:deg_codeg_trop_trans} implies
that $\phi_{t'[1],t[1]}^{\op}\big(\psi_{t,t[1]}^{-1}g\big)=\psi_{t',t'[1]}^{-1}(\phi_{t',t}g)$.
Therefore, $Z$ is compatibly copointed at $t[1]$, $t'[1]$.
\end{proof}

\section{Bidegrees and bases\label{sec:Bidegrees-and-bases}}

Let there be given an injective-reachable quantum seed $t$ and a
subalgebra $\midAlg(t)\subset\qUpClAlg(t)$. \mbox{Assume} that $\midAlg(t)$
possesses a $\kk$-basis $\can$. Then $\midAlg(t)$ naturally gives
rise to a subalgebra $\midAlg(t'):=\seq_{t,t'}^{*}\midAlg(t)\subset\qUpClAlg(t')=\seq_{t,t'}^{*}\qUpClAlg(t)$,
and $\can$ naturally gives rise to a basis $\seq_{t,t'}^{*}\can$
of $\midAlg(t')$. We~some\-times omit the symbols $t$, $t'$, identifying
$\midAlg(t)$ and $\midAlg(t')$, $\can$ and $\seq_{t,t'}^{*}\can$.

\subsection{Bases with different properties\label{subsec:Bases-with-different}}

\begin{Def}[degree-triangular basis]
A $\kk$-basis $\can$ of $\midAlg(t)$ is said to be a degree-triangular
basis with respect to $t$ if the following conditions hold:
\begin{enumerate}\itemsep=0pt
\item[(1)] $X_{i}(t)\in\can$ for $i\in I$.

\item[(2)] \textit{Bar-invariance:} $\can$ is invariant under the bar involution.

\item[(3)] \textit{Degree parametrization:} $\can$ is $\Mc(t)$-pointed, i.e.,
it takes the form $\can=\{\can_{g}\,|\, g\in\Mc(t)\}$ such that $\can_{g}$
is $g$-pointed.

\item[(4)] \textit{Degree triangularity:} For any basis element $\can_{g}$, $i\in I$,
the decomposition of the pointed function $[X_{i}(t)*\can_{g}]^{t}$
in terms of $\can$ is degree $(\prec_{t},\mm)$-unitriangular:
\begin{gather*}
[X_{i}(t)*\can_{g}]^{t} = \sum_{g'\preceq_{t}g+f_{i}}b_{g'}\can_{g'},
\end{gather*}
where $b_{g+f_{i}}=1$, $b_{g'}\in\mm$ for $g'\prec_{t}g+f_{i}$.
\end{enumerate}

The basis is said to be a cluster degree-triangular basis with respect
to $t$, or a triangular basis for short, if it further contains the
quantum cluster monomials in $t$ and $t[1]$.
\end{Def}

It is not clear if a degree-triangular basis is unique or not. Nevertheless,
a triangular basis must be unique if it exists, see \cite[Lemma~6.3.2]{qin2017triangular}.
By definition, $\Inj^{t}$ is $(\prec_{t},\mm)$-unitriangular to
the triangular basis.

We now propose the dual version below.

\begin{Def}[codegree-triangular basis]\label{def:right_triangular_basis}
A $\kk$-basis $\can$ of $\midAlg(t)$ is said to be a codegree-triangular
basis with respect to $t$ if the following conditions hold:
\begin{enumerate}\itemsep=0pt
\item[(1)] $X_{i}(t)\in\can$ for $i\in I$.

\item[(2)] \textit{Bar-invariance:} $\can$ is invariant under the bar involution.

\item[(3)] \textit{Codegree parametrization:} $\can$ is $\Mc(t)$-copointed,
i.e., it takes the form $\can=\{\can^{\eta}\,|\,\eta\in\Mc(t)\}$ such
that $\can^{\eta}$ is $\eta$-copointed.

\item[(4)] \textit{Codegree triangularity:} For any basis element $\can^{\eta}$,
$i\in I$, the decomposition of the copointed function $\{\can^{\eta}*X_{i}(t)\}^{t}$
in terms of $\can$ is codegree $(\succ_{t},\mm)$-unitriangular:
\begin{gather*}
\{\can^{\eta}*X_{i}(t)\}^{t} = \sum_{\eta'\succeq_{t}\eta+f_{i}}c_{\eta'}\can^{\eta'},
\end{gather*}
where $c_{\eta+f_{i}}=1$, $c_{\eta'}\in\mm$ for $\eta'\succ_{t}\eta+f_{i}$.
\end{enumerate}

The basis is said to be a cluster codegree-triangular basis with respect
to $t$ if it further contains the quantum cluster monomials in $t$
and $t[-1]$.
\end{Def}

By definition, $\Proj^{t}$ is codegree $(\succ_{t},\mm)$-unitriangular
to the cluster codegree-triangular basis. Similar to \cite[Lemma~6.3.2]{qin2017triangular},
we can show that the cluster codegree-triangular basis is unique if
it exists.

\begin{Lem}[factorization]\label{lem:factorization}\quad
\begin{enumerate}\itemsep=0pt
\item[$(1)$] Let there be given a
degree-triangular basis $\can$. Then $[X_{i}(t)*S]^{t}=[S*X_{i}(t)]^{t}\in\can$
for any $i\in I_{\fv}$, $S\in\can$ {\rm \cite[Lemma~6.2.1]{qin2017triangular}}.

\item[$(2)$] Let there be given a codegree-triangular basis $\can$. Then $\{X_{i}(t)*S\}^{t}=\{S*X_{i}(t)\}^{t}\in\can$
for any $i\in I_{\fv}$, $S\in\can$.
\end{enumerate}
\end{Lem}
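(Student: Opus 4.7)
The plan is to adapt the proof of part~(1) (\cite[Lemma 6.2.1]{qin2017triangular}) to the codegree setting; alternatively, (2) may be deduced from (1) through the anti-isomorphism $\iota$ of~\eqref{eq:opposite_hLP}, which exchanges $\tLP(t)$ with $\hLP(t^{\op})$, swaps (co)pointedness, reverses the dominance order (because $\tB(t^{\op})=-\tB(t)$), and commutes with the bar involution. Both routes rest on the same mechanism: a frozen cluster variable is quasi-central with respect to every element of $\midAlg(t)$.

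For the direct approach, I would fix $i\in I_{\fv}$ and write a basis element as $S=\can^{\eta}=\sum_{m}a_{m}X(t)^{m}$. Since $S$ is $\eta$-copointed, every $m\in\supp S$ differs from $\eta$ by an element of $p^{*}\Nufv(t)$. The compatibility relation $\tB(t)^{T}\Lambda(t)=(D\ 0)$ yields $\lambda(f_{i},p^{*}e_{k})=-\delta_{ik}\diag_{k}=0$ for every $k\in I_{\ufv}$, so $\lambda(f_{i},m)=\lambda(f_{i},\eta)$ throughout $\supp S$. A one-line calculation then gives
\begin{gather*}
X_{i}(t)*S = v^{2\lambda(f_{i},\eta)}\,S*X_{i}(t),
\end{gather*}
so the two codegree-normalized products $\{X_{i}(t)*S\}^{t}$ and $\{S*X_{i}(t)\}^{t}$ coincide, each equalling $v^{\lambda(f_{i},\eta)}\,S*X_{i}(t)$; this common value is bar-invariant by anti-multiplicativity of the bar involution together with $\overline{X_{i}(t)}=X_{i}(t)$ and $\overline{S}=S$.

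To conclude, I would apply the codegree-triangularity axiom of Definition~\ref{def:right_triangular_basis} to expand $\{S*X_{i}(t)\}^{t}=\can^{\eta+f_{i}}+\sum_{\eta'\succ_{t}\eta+f_{i}}c_{\eta'}\can^{\eta'}$ with $c_{\eta'}\in\mm$. Bar-invariance of the left-hand side and of each $\can^{\eta'}$ forces $\overline{c_{\eta'}}=c_{\eta'}$, but $\mm\cap\overline{\mm}=v^{-1}\Z[v^{-1}]\cap v\Z[v]=\{0\}$, so every coefficient with $\eta'\succ_{t}\eta+f_{i}$ vanishes, giving $\{X_{i}(t)*S\}^{t}=\{S*X_{i}(t)\}^{t}=\can^{\eta+f_{i}}\in\can$. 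No step presents a genuine obstacle; the whole argument is the codegree mirror of~\cite[Lemma~6.2.1]{qin2017triangular}, and the only bookkeeping worth double-checking is that the normalization constant transports correctly through the bar involution. If instead one prefers the $\iota$-reduction, the one verification required is that $\iota\can$ satisfies the four axioms of a degree-triangular basis for $\iota\midAlg(t)\subset\qUpClAlg(t^{\op})$ with respect to $t^{\op}$, after which part~(1) applied there and pulled back by $\iota^{-1}$ yields~(2).
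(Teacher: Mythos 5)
Your proof is correct. The paper itself gives no explicit proof of this lemma---part~(1) is cited from \cite[Lemma 6.2.1]{qin2017triangular}, and part~(2) is asserted without further comment, following the paper's general convention (cf.\ the proof of Lemma~\ref{lem:triangular_decomp}) that codegree statements are obtained from degree statements either ``similarly'' or via the anti-isomorphism $\iota$. Your direct computation is the codegree mirror of that reference and is correct in every step: for $i\in I_{\fv}$ and $k\in I_{\ufv}$ one has $\lambda(f_i,p^*e_k)=-\delta_{ik}\diag_k=0$, so $\lambda(f_i,\cdot)$ is constant on $\supp S$ for $S$ copointed, giving the quasi-commutation $X_i*S=v^{2\lambda(f_i,\eta)}S*X_i$; the two codegree normalizations then coincide and are bar-invariant by anti-multiplicativity of the bar involution; and the codegree-unitriangular expansion (axiom~(4) of Definition~\ref{def:right_triangular_basis}) collapses to a single term because the lower-order coefficients lie in the bar-invariant part of $\mm$, which is $\{0\}$. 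The alternative $\iota$-reduction you sketch is equally valid, and its one nontrivial verification (that $\iota\can$ is a degree-triangular basis for $t^{\op}$) is exactly the kind of bookkeeping the paper carries out elsewhere, e.g.\ in the proof of Proposition~\ref{prop:common_tri_basis_opposite}.
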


\begin{Def}[bidegree-triangular basis]
If $\can$ is both degree-triangular and codegree-tri\-an\-gu\-lar with
respect to $t$, we call it a bidegree-tri\-an\-gu\-lar basis with respect
to $t$.
\end{Def}

\begin{Def}[double triangular basis]\label{def:double_triangular_basis}
If $\can$ is bidegree-triangular with respect to $t$ and further
contains the quantum cluster monomials in $t$, $t[-1]$, $t[1]$, we call
it a cluster bidegree-triangular basis of $\midAlg(t)$ or a double
triangular basis with respect to $t$.
\end{Def}

The basis $\can$ in Example~\ref{eg:A2_cluster} provides an example
of double triangular bases.

\begin{Def}[common triangular basis]
Assume that $\can$ is the triangular basis of $\midAlg(t)$ with
respect to $t$. If, for any $t'\in\Delta^{+}$, $\seq_{t,t'}^{*}\can$
is the triangular basis of $\midAlg(t')=\seq_{t,t'}^{*}\midAlg(t)$
with respect to $t'$ and is compatible with $\can$, we call $\can$
the common triangular basis.
\end{Def}

\subsection{From triangular bases to double triangular bases}

\begin{Prop}\label{prop:triangular_to_double}
Let there be given the triangular basis $\can^{t}$ of $\midAlg(t)$
with respect to the seed~$t$. If~$\can^{t[-1]}:=\seq_{t,t[-1]}^{*}\can^{t}$
is the triangular basis with respect to~$t[-1]$, then $\can^{t}$
is the double triangular basis with respect to~$t$.
\end{Prop}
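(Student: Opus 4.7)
The plan is to establish the two facts about $\can^{t}$ that, combined with the degree-triangularity already supplied by the hypothesis, give it the structure of a double triangular basis in the sense of Definition~\ref{def:double_triangular_basis}: namely, that $\can^{t}$ contains the quantum cluster monomials of $t[-1]$, and that $\can^{t}$ is codegree-triangular at~$t$.

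The first is nearly immediate. Since $\can^{t[-1]}$ is the triangular basis at $t[-1]$, it contains by definition the quantum cluster monomials of $t[-1]$ and of $(t[-1])[1]=t$. The map $\seq_{t[-1],t}^{*}$ is an algebra identification of $\midAlg(t[-1])$ with $\midAlg(t)$ acting as the identity on each intrinsic quantum cluster monomial, so $\can^{t}=\seq_{t[-1],t}^{*}\can^{t[-1]}$ inherits the cluster monomials of $t$, $t[1]$, and $t[-1]$, as required.

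The second fact is the heart of the proposition, and rests on the Swap Proposition~\ref{prop:swap_deg_codeg}. Fix $b\in\can^{t}$ and set $b':=\seq_{t,t[-1]}^{*}b\in\can^{t[-1]}$, pointed at some $g\in\Mc(t[-1])$ with leading coefficient $1$. The reverse direction of Swap forces $b$ itself to be copointed at $\eta:=\psi_{t[-1],t}^{-1}g$ at~$t$, again with leading coefficient~$1$. Under the algebra isomorphism $\seq_{t,t[-1]}^{*}$ we have $\seq_{t,t[-1]}^{*}X_{i}(t)=I_{k}(t[-1])$ for $k=\sigma^{-1}(i)$, where $\sigma$ is the shift permutation from $t[-1]$ to $t$; moreover $\psi_{t[-1],t}f_{i}=\deg^{t[-1]}\seq_{t,t[-1]}^{*}X_{i}(t)=\deg^{t[-1]}I_{k}(t[-1])$, so linearity of $\psi_{t[-1],t}$ matches the target codegree $\eta+f_{i}$ at~$t$ with the target degree $g+\deg^{t[-1]}I_{k}(t[-1])$ at $t[-1]$. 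Because both $\seq_{t,t[-1]}^{*}\{b*X_{i}(t)\}^{t}$ and $[b'*I_{k}(t[-1])]^{t[-1]}$ are pointed at this common degree with leading coefficient $1$, they coincide, so the normalizations $\{\ \}^{t}$ and $[\ ]^{t[-1]}$ are compatible under $\seq_{t,t[-1]}^{*}$.

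It then suffices to produce a degree $(\prec_{t[-1]},\mm)$-unitriangular decomposition of $[b'*I_{k}(t[-1])]^{t[-1]}$ in $\can^{t[-1]}$ and translate it back by Swap~(1), which converts $\prec_{t[-1]}$ on degrees into $\succ_{t}$ on codegrees via $\psi_{t[-1],t}^{-1}$. Such a decomposition comes from the substitution Lemma~\ref{lem:substitution}(1) applied at $t[-1]$: since $b'\in\can^{t[-1]}$ is in particular $(\prec_{t[-1]},\mm)$-unitriangular to $\Inj^{t[-1]}$ and $I_{k}(t[-1])=I(t[-1])^{e_{k}}$, the normalized product $[b'*I_{k}(t[-1])]^{t[-1]}$ is $(\prec_{t[-1]},\mm)$-unitriangular to $\Inj^{t[-1]}$; composing with the $(\prec_{t[-1]},\mm)$-unitriangular change-of-basis from $\Inj^{t[-1]}$ to $\can^{t[-1]}$ yields the desired decomposition in $\can^{t[-1]}$, with leading term at $g+\deg^{t[-1]}I_{k}(t[-1])$ of coefficient $1$ and all other coefficients in $\mm$. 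The main obstacle is verifying the hypothesis of Lemma~\ref{lem:substitution}(1), namely that the normalized products $[X(t[-1])^{d}*I(t[-1])^{d'}]^{t[-1]}$ are $(\prec_{t[-1]},\mm)$-unitriangular to $\Inj^{t[-1]}$ for the relevant $d,d'$; the remaining technicalities—tracking the shift permutation $\sigma$, matching the two normalizations, and confirming that $\mm$-integrality is preserved under Swap—are automatic from the structural results already recalled.
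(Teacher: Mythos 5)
Your approach is essentially the same as the paper's: use Proposition~\ref{prop:swap_deg_codeg} to turn codegree data at $t$ into degree data at $t[-1]$, apply Lemma~\ref{lem:substitution}(1) in $\LP(t[-1])$ to get a $(\prec_{t[-1]},\mm)$-unitriangular decomposition of the normalized product, and transport it back through $\seq_{t[-1],t}^{*}$ to obtain the required codegree $(\succ_{t},\mm)$-unitriangularity. The paper also does not re-verify the hypothesis of Lemma~\ref{lem:substitution}(1); both arguments take this as an input from \cite{qin2017triangular}, so your flagging it as an obstacle is honest but is not a point of divergence.

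The one genuine gap is the frozen case. Your identity $\seq_{t,t[-1]}^{*}X_{i}(t)=I_{\sigma^{-1}(i)}(t[-1])$ holds only for $i\in I_{\ufv}$ (the permutation $\sigma$ is defined on $I_{\ufv}$); for $i\in I_{\fv}$ one has $\seq_{t,t[-1]}^{*}X_{i}(t)=X_{i}(t[-1])$, so the reduction to a product with $I(t[-1])^{e_{k}}$ does not apply as written, and codegree triangularity for frozen multipliers is left unchecked. The paper disposes of this separately and cheaply via Lemma~\ref{lem:factorization}(1), noting that for $i\in I_{\fv}$ and a bipointed basis element $V$ one has $\{V*X_{i}(t)\}^{t}=[X_{i}(t)*V]^{t}\in\can^{t}$. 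Alternatively, your own scheme can be salvaged by applying Lemma~\ref{lem:substitution}(1) with $d=f_{i}\in\Z^{I_{\fv}}$, $d'=0$, and using that frozen variables $*$-commute with everything up to a power of $v$, so $[b'*X_{i}(t[-1])]^{t[-1]}=[X_{i}(t[-1])*b']^{t[-1]}$ fits the lemma's format; either way, you should state the frozen case explicitly rather than fold it into the permutation formula.
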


\begin{proof}
By assumption, $\can^{t}$ is the triangular basis for $t$ and $t[-1]$,
thus it must contain the quantum cluster monomials in $t$, $t[1]$
and $t[-1]$, $t$ respectively. It remains to check that $\can^{t}$
satisfies the defining conditions of a codegree triangular basis for
$t$, see Definition~\ref{def:right_triangular_basis}.

Conditions (1) and (2) are trivial as $\can^{t}$ is assumed to be
triangular for $t$. Since $\can^{t[-1]}$ is $\Mc(t[-1])$-pointed,
$\can^{t}=\seq_{t[-1],t}^{*}\can^{t[-1]}$ is $\Mc(t)$-copointed
by Proposition~\ref{prop:swap_deg_codeg}. Thus Condition~(3) is also
verified.

Now, let us prove Condition (4).

First, consider any $i\in I_{\fv}$. Then for any $V\in\can^{t}$
which is bipointed by (i), we have $\{V*X_{i}(t)\}^{t}=[V*X_{i}(t)]^{t}=[X_{i}(t)*V]^{t}\in\can^{t}$
by Lemma~\ref{lem:factorization}.

Second, consider any $k\in I_{\ufv}$ and any $\eta$-copointed element
$V\in\can^{t}$. Then $\seq_{t,t[-1]}^{*}X_{k}(t)=I_{\sigma^{-1}k}(t[-1])$,
and $\seq_{t,t[-1]}^{*}V$ is pointed at $g=\psi_{t[-1],t}\eta$.
Since $\seq_{t,t[-1]}^{*}V$ belongs to the triangular basis $\seq_{t,t[-1]}^{*}\can^{t}=\can^{t[-1]}$,
the normalized product
\begin{gather*} Z:=\big[\seq_{t,t[-1]}^{*}V*I_{\sigma^{-1}k}(t[-1])\big]^{t[-1]}=v^{\alpha}\seq_{t,t[-1]}^{*}V*I_{\sigma^{-1}k}(t[-1]),
\end{gather*}
$\alpha\in\Z$, is $(\prec_{t[-1]},\mm)$-unitriangular to $\Inj^{t[-1]}$
by Lemma~\ref{lem:substitution}. Therefore, it is $(\prec_{t[-1]},\mm)$-uni\-tri\-angular
to $\can^{t[-1]}$. Then it has the following finite $(\prec_{t[-1]},\mm)$-unitriangular
decomposition in~$\can^{t[-1]}$:
\begin{gather*}
Z = S^{(0)}+\sum_{j=1}^{r}b^{(j)}S^{(j)}
\end{gather*}
with $b^{(j)}\in\mm$, $r\in\N$, $\deg^{t[-1]}S^{(j)}\prec_{t[-1]}\deg^{t[-1]}S^{(0)}=\deg^{t[-1]}Z$
for $j>0$.

Applying the mutation $\seq_{t[-1],t}^{*}$, we obtain
\begin{gather*}
Z' : =\seq_{t[-1],t}^{*}Z=v^{\alpha}V*X_{k}(t)=\seq_{t[-1],t}^{*}S^{(0)}+\sum_{j=1}^{r}b^{(j)}\seq_{t[-1],t}^{*}S^{(j)}.
\end{gather*}
Proposition~\ref{prop:swap_deg_codeg} implies that $Z'$ is copointed
and, for any $j>0$, we must have\vspace{-.7ex}
\begin{gather*}
\codeg^{t}\seq_{t[-1],t}^{*}S^{(j)}\succ_{t}\codeg^{t}\seq_{t[-1],t}^{*}S^{(0)}=\codeg^{t}Z'.
\vspace{-.7ex}
\end{gather*}
Then this is a codegree $(\succ_{t},\mm)$-unitriangular decomposition
in~terms of the copointed set $\can^{t}$.
\end{proof}

We prove the following inverse result, although it will not be used
in this paper.

\begin{Prop}
Assume that $\can^{t}$ is the double triangular basis of $\midAlg(t)$
with respect to the seed~$t$. Then $\can^{t[-1]}:=\seq_{t,t[-1]}^{*}\can^{t}$
is the triangular basis with respect to~$t[-1]$.
\end{Prop}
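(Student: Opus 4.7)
The plan is to verify the four defining conditions of a triangular basis at $t[-1]$ for $\can^{t[-1]}:=\seq_{t,t[-1]}^{*}\can^{t}$, exploiting the symmetry between degrees and codegrees provided by Proposition~\ref{prop:swap_deg_codeg}. The argument is essentially dual to Proposition~\ref{prop:triangular_to_double}, with the roles of degree and codegree interchanged.

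The three easier conditions can be dispatched together. The containment $X_{i}(t[-1])\in\can^{t[-1]}$ for all $i\in I$, and more generally the presence of the quantum cluster monomials at $t[-1]$ and at $t[-1][1]=t$, follows from the double triangular hypothesis: $\can^{t}$ contains the cluster monomials at $t$, $t[-1]$, $t[1]$, and $\seq_{t,t[-1]}^{*}$ identifies $P_{\sigma(i)}(t)$ with $X_{i}(t[-1])$ and $X_{i}(t)$ with $I_{\sigma^{-1}(i)}(t[-1])$. Bar-invariance of $\can^{t[-1]}$ is inherited from that of $\can^{t}$ through the commutation of $\seq_{t,t[-1]}^{*}$ with the bar involution. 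The $\Mc(t[-1])$-pointed parametrization follows from $\can^{t}$ being $\Mc(t)$-copointed (a consequence of its codegree-triangular property), combined with Proposition~\ref{prop:swap_deg_codeg}, which sends $\eta$-copointed elements of $\LP(t)$ to $\psi_{t[-1],t}\eta$-pointed elements of $\LP(t[-1])$, leading coefficient preserved.

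The core step is the degree triangularity. For $V'=\seq_{t,t[-1]}^{*}V\in\can^{t[-1]}$ with $V\in\can^{t}$ $\eta$-copointed, and $i\in I$, I would transport the normalized product $[X_{i}(t[-1])*V']^{t[-1]}$ back to $\LP(t)$ via $\seq_{t[-1],t}^{*}$. Using the identifications $\seq_{t[-1],t}^{*}X_{i}(t[-1])=P_{\sigma(i)}(t)$ for $i\in I_{\ufv}$ and $=X_{i}(t)$ for $i\in I_{\fv}$, together with Proposition~\ref{prop:swap_deg_codeg} turning degree normalization at $t[-1]$ into codegree normalization at $t$, the transported product becomes $\{P_{\sigma(i)}(t)*V\}^{t}$ or $\{V*X_{i}(t)\}^{t}$ respectively. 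The frozen case lies directly in $\can^{t}$ by Lemma~\ref{lem:factorization}(2). For the unfrozen case, I would invoke Lemma~\ref{lem:substitution}(2) with $d'=e_{\sigma(i)}$ and $d=0$: since $V\in\can^{t}$ is codegree $(\succ_{t},\mm)$-unitriangular to $\Proj^{t}$ by inverting the codegree-triangular transition matrix between $\Proj^{t}$ and $\can^{t}$, the substitution lemma delivers $\{P_{\sigma(i)}(t)*V\}^{t}$ codegree $(\succ_{t},\mm)$-unitriangular to $\Proj^{t}$. Composing with the known codegree-unitriangular decomposition of $\Proj^{t}$ in $\can^{t}$ yields the same relation to $\can^{t}$. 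Finally, pushing forward via $\seq_{t,t[-1]}^{*}$, Proposition~\ref{prop:swap_deg_codeg} transports this codegree $(\succ_{t},\mm)$-unitriangularity into the desired degree $(\prec_{t[-1]},\mm)$-unitriangular decomposition of $[X_{i}(t[-1])*V']^{t[-1]}$ in $\can^{t[-1]}$.

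The main obstacle I anticipate is verifying the standing hypothesis of Lemma~\ref{lem:substitution}(2), namely that the normalized products $\{P(t)^{d'}*X(t)^{d}\}^{t}$ are themselves codegree $(\succ_{t},\mm)$-unitriangular to $\Proj^{t}$. This codegree analog of the hypothesis in the degree version should follow either by induction using the exchange relations defining the $P_{k}(t)$, or by applying the anti-isomorphism $\iota$ of~\eqref{eq:opposite_LP} to translate the analogous degree statement from~\cite{qin2017triangular}. A secondary concern is the bookkeeping around the permutation $\sigma$ and confirmation that $\seq_{t[-1],t}^{*}$ preserves the leading coefficient exactly, ensuring no spurious scalar appears when identifying the degree and codegree normalizations.
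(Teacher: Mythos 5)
Your proposal takes essentially the same route as the paper's proof: transport the normalized product back to $\LP(t)$ via $\seq_{t[-1],t}^{*}$, convert degree normalization at $t[-1]$ to codegree normalization at $t$ via Proposition~\ref{prop:swap_deg_codeg}, dispatch the frozen case with Lemma~\ref{lem:factorization}(2), handle the unfrozen case with Lemma~\ref{lem:substitution}(2) together with the codegree $(\succ_{t},\mm)$-unitriangularity of $\can^{t}$ to $\Proj^{t}$ obtained by inverting the triangular transition matrix, and push the resulting codegree-unitriangular decomposition forward again via $\seq_{t,t[-1]}^{*}$. Your secondary worry about leading coefficients is already resolved by Proposition~\ref{prop:swap_deg_codeg}(2) (pointed and copointed both fix the top coefficient to be $1$), and your flagged concern about the standing hypothesis of Lemma~\ref{lem:substitution}(2) is real but applies equally to the paper's own proof, which invokes the lemma without explicit verification; as you suggest, it follows from iterating the codegree-triangular condition (or by applying $\iota$ to the degree statement from~\cite{qin2017triangular}).
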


\begin{proof}
By assumption, $\can^{t[-1]}$ contains the quantum cluster monomials
in $t[-1]$,~$t$. It remains to check that $\can^{t[-1]}$ satisfies
the definition condition of a degree triangular basis for $t[-1]$.

(i) Since $\can^{t}$ is $\Mc(t)$-copointed, $\can^{t[-1]}=\seq_{t,t[-1]}^{*}\can^{t}$
is $\Mc(t[-1])$-pointed by Proposition~\ref{prop:swap_deg_codeg}.

(ii-a) Take any $i\in I_{\fv}$. Then for any $(g-f_{i})$-pointed
element $V\in\can^{t[-1]}$, we have $X_{i}*V=v^{\alpha}X_{i}\cdot V=v^{2\alpha}V*X_{i}$
for some $\alpha\in\Z$. Since $X_{i}\cdot V$ is $g$-pointed, it
agrees with $[X_{i}*V]^{t[-1]}$. Moreover, $\seq_{t[-1],t}^{*}(X_{i}\cdot V)$
is $\eta$-copointed by Proposition~\ref{prop:swap_deg_codeg}, where
$\eta=\psi_{t[-1],t}^{-1}g$. Therefore, $\seq_{t[-1],t}^{*}(v^{-\alpha}X_{i}*V)=v^{-\alpha}X_{i}*\seq_{t[-1],t}^{*}V$
agrees with the copointed function $\big\{X_{i}*\seq_{t[-1],t}^{*}V\big\}^{t}$.
Using Lemma~\ref{lem:factorization}, we deduce that $\seq_{t[-1],t}^{*}[X_{i}*V]^{t[-1]}=\big\{X_{i}*\seq_{t[-1],t}^{*}V\big\}^{t}$
is contained in the codegree triangular basis $\can^{t}$. Consequently,
$[X_{i}*V]^{t[-1]}$ belongs to $\can^{t[-1]}$.

(ii-b) Take any $k\in I_{\ufv}$ and $g$-pointed element $V\in\can^{t[-1]}$.
Then $\seq_{t[-1],t}^{*}X_{k}(t[-1])=$ \mbox{$P_{\sigma k}(t)\in\can^{t}$},
and $\seq_{t[-1],t}^{*}V$ is copointed at $\eta=\psi_{t[-1],t}^{-1}g$.
The function $\seq_{t[-1],t}^{*}[X_{k}(t[-1])*V]^{t[-1]}$ is copointed
by Proposition~\ref{prop:swap_deg_codeg}, i.e., $\seq_{t[-1],t}^{*}[X_{k}(t[-1])*V]^{t[-1]}=\{P_{\sigma k}(t)*\seq_{t[-1],t}^{*}V\}^{t}$.
Since~$\can^{t}$ is a double triangular basis, $\seq_{t[-1],t}^{*}V$
is codegree $(\succ_{t},\mm)$-unitriangular to $\Proj^{t}$. Lemma~\ref{lem:substitution} implies that $\{P_{\sigma k}(t)*\seq_{t[-1],t}^{*}V\}^{t}$
is codegree $(\succ_{t},\mm)$-unitriangular to $\Proj^{t}$ and,
consequently, is codegree $(\succ_{t},\mm)$-unitriangular to $\can^{t}$.
We obtain a finite codegree $(\succ_{t},\mm)$-unitriangular decomposition\vspace{-1ex}
\begin{gather*}
Z:=\big\{P_{\sigma k}(t)*\seq_{t[-1],t}^{*}V\big\}^{t} = \sum_{j=0}^{r-1}b^{(j)}S^{(j)}+S^{(r)}\vspace{-1ex}
\end{gather*}
with $r\in\N$, $b^{(j)}\in\mm$, $\codeg S^{(j)}\succ_{t}\codeg^{t}S^{(r)}=\codeg^{t}Z$
for $j<r$.

Applying the mutation $\seq_{t,t[-1]}^{*}$, we obtain\vspace{-1ex}
\begin{gather*}
 Z' :=\seq_{t,t[-1]}^{*}Z=[X_{k}(t[-1])*V]^{t[-1]}
 =\sum_{j=1}^{r}b^{(j)}\seq_{t,t[-1]}^{*}S^{(j)}+\seq_{t,t[-1]}^{*}S^{(r)}.\vspace{-1ex}
\end{gather*}
Proposition~\ref{prop:swap_deg_codeg} implies that $Z'$ is pointed
and, for any $j<r$, we have $\deg^{t[-1]}\seq_{t,t[-1]}^{*}S^{(j)}\prec_{t[-1]}\deg^{t[-1]}\seq_{t,t[-1]}^{*}S^{(r)}=\deg^{t[-1]}Z'$.
Therefore, this decomposition becomes a degree $(\prec_{t[-1]},\mm)$-unitriangular
decomposition in $\can^{t}$.\vspace{-1ex}
\end{proof}

\subsection{Properties of common triangular bases}

Recall that we have the $\kk$-algebra anti-isomorphism $\iota\colon\LP(t)\simeq\LP(t^{\op})$
such that $\iota(X^{m})=X^{m}$, see~\eqref{eq:opposite_LP}. Define
the subalgebra $\midAlg(t^{\op})=\iota\midAlg(t)\subset\qUpClAlg(t^{\op})$.

\begin{Prop}\label{prop:common_tri_basis_opposite}
If $\midAlg(t)$ possesses the common triangular basis $\can\subset\LP(t)$,
then $\midAlg(t^{\op})$ possesses the common triangular basis $\iota\can\subset\LP(t^{\op})$.
\end{Prop}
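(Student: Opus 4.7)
The plan is to verify directly that $\iota\can$ satisfies every defining condition of a common triangular basis of $\midAlg(t^{\op})$ by transporting the corresponding properties of $\can$ through the anti-isomorphism $\iota$. The tools are: $\iota$ commutes with mutations (diagram~\eqref{eq:comm_mutation_op}), preserves $X^m$ (hence preserves the bar involution and the frozen factors), identifies $\Mc(t')$ with $\Mc(t'^{\op})$ as $\Z^I$, and swaps ``$g$-pointed'' with ``$g$-copointed''. One also uses $t'[\pm 1]^{\op}=t'^{\op}[\mp 1]$ for every $t'\in\Delta^{+}$.

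First I would upgrade $\can$ to a double triangular basis at every seed. Since $\can$ is common triangular, for any $t'\in\Delta^{+}$ the basis $\seq_{t,t'}^{*}\can$ is the triangular basis at both $t'$ and $t'[-1]$; Proposition~\ref{prop:triangular_to_double} then promotes it to a double triangular basis at $t'$. Applying $\iota$, the codegree-triangularity of $\seq_{t,t'}^{*}\can$ at $t'$ becomes degree-triangularity of $\iota\seq_{t,t'}^{*}\can=\seq_{t^{\op},t'^{\op}}^{*}\iota\can$ at $t'^{\op}$, its degree-triangularity becomes codegree-triangularity, and the cluster monomials in $t'[-1],t',t'[1]$ are sent to cluster monomials in $t'^{\op}[1],t'^{\op},t'^{\op}[-1]$. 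Combined with bar-invariance, this shows that $\iota\can$ is a triangular basis at every $t'^{\op}\in\Delta_{t^{\op}}^{+}$.

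It remains to check compatibility across opposite seeds. The common triangularity of $\can$ yields compatible pointedness at every pair $t'[-1],t''[-1]$; by Proposition~\ref{prop:compatibily_pointed_copointed}, this is equivalent to compatible copointedness of $\can$ at $(t',t'')$. Applying $\iota$ and using the identity $\phi_{t'^{\op},t^{\op}}=\iota\,\coTrop_{t',t}\,\iota^{-1}$ (the lemma immediately before the definition of codegree compatibility in Section~\ref{subsec:Tropical-transformations}), this converts into compatible pointedness of $\iota\can$ at $(t'^{\op},t''^{\op})$. Hence $\iota\can$ satisfies all axioms of the common triangular basis of $\midAlg(t^{\op})$. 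The only genuine obstacle is organizational: one must track how $\iota$ and the shifts $[\pm 1]$ interact so that the interchange of pointed/copointed and of the tropical transformations $\phi$ and $\coTrop$ lands in the right places; all the required technical inputs are already in place from the preceding sections.
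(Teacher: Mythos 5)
Your proof is correct and follows essentially the same route as the paper's: apply Proposition~\ref{prop:triangular_to_double} to upgrade $\can$ to a double triangular basis at every seed, push the codegree structure through $\iota$ (using that $\iota$ commutes with mutations, preserves cluster monomials, and swaps pointed/copointed) to get a triangular basis at each $(t')^{\op}$, and invoke Proposition~\ref{prop:compatibily_pointed_copointed} to turn compatible pointedness at shifted seeds into compatible copointedness, hence compatible pointedness of $\iota\can$ on the opposite side. Your added remarks on $t'[\pm1]^{\op}=t'^{\op}[\mp1]$ and on $\phi_{(t')^{\op},t^{\op}}=\iota\,\coTrop_{t',t}\,\iota^{-1}$ are the same technical inputs the paper relies on implicitly.
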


\begin{proof}
Notice that $\iota$ sends (quantum) cluster monomials $\seq_{t',t}^{*}X(t')^{m}$
to (quantum) cluster monomials $\seq_{(t')^{\op},t^{\op}}^{*}X((t')^{\op})^{m}$,
$m\in\N^{I_{\ufv}}$, because it commutes with mutations. In particular,
it gives a bijection between the sets of cluster monomials.

For any seed $t'\in\Delta^{+}$, because the common triangular basis
$\can$ gives rise to the double triangular basis for $t'$ by Proposition
\ref{prop:triangular_to_double}, it gives rise to a codegree triangular
basis $\can^{t'}\subset\LP(t')$ for $t'$. Then $\iota\can^{t'}\subset\LP((t')^{\op})$
is a degree triangular basis containing all cluster monomials. Therefore,
$\iota\can^{t'}$ is the triangular basis with respect to $(t')^{\op}$.

Moreover, for any $t,t'\in\Delta^{+}$, because the elements of $\can$
are compatibly pointed at $t[-1]$, $t'[-1]$, the elements of $\can$
are compatibly copointed at $t$, $t'$ by Proposition~\ref{prop:compatibily_pointed_copointed}.
It follows that the elements of $\iota\can$ are compatibly pointed
at $t^{\op}$, $(t')^{\op}$.

Therefore, $\iota\can$ is the common triangular basis by definition.
\end{proof}

Recall that a common triangular basis is necessarily compatibly pointed
at $\Delta^{+}$. We have the following results.

\begin{Thm}\label{thm:common_tri_to_double}
\looseness=-1
Let there be a $\kk$-subalgebra $\midAlg(t)$ of the upper quantum
cluster algebra $\qUpClAlg(t)$. Assume that $\midAlg(t)$ possesses
the common triangular basis $\can$. Then the following statements
are true.
\begin{enumerate}\itemsep=0pt
\item[$(1)$] $\seq_{t,t'}^{*}\can$ is the double triangular basis of $\midAlg(t')=\seq_{t,t'}^{*}\midAlg(t)$
for any seed $t'\in\Delta^{+}$.

\item[$(2)$] $\can$ is compatibly copointed at $\Delta^{+}$.
\end{enumerate}
\end{Thm}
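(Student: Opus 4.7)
The plan is to reduce both claims to results already established in the excerpt: Proposition~\ref{prop:triangular_to_double} for part~(1), and Proposition~\ref{prop:compatibily_pointed_copointed} for part~(2). Neither step should require new computations; the main content is just checking that the hypotheses of these propositions are met uniformly over $\Delta^{+}$.

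For part~(1), I would fix any seed $t'\in\Delta^{+}$ and set $\can^{t'}:=\seq_{t,t'}^{*}\can$. By definition of a common triangular basis, $\can^{t'}$ is the triangular basis of $\midAlg(t')$ with respect to $t'$. Since $t'[-1]\in\Delta^{+}$ as well, applying the same definition to $t'[-1]$ gives that $\seq_{t,t'[-1]}^{*}\can$ is the triangular basis with respect to $t'[-1]$. Using the identification $\seq_{t',t'[-1]}^{*}\circ\seq_{t,t'}^{*}=\seq_{t,t'[-1]}^{*}$, this means $\seq_{t',t'[-1]}^{*}\can^{t'}$ is the triangular basis with respect to $t'[-1]$. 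Proposition~\ref{prop:triangular_to_double} (applied with $t$ replaced by $t'$) then yields that $\can^{t'}$ is the double triangular basis with respect to $t'$.

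For part~(2), I would take any two seeds $s,s'\in\Delta^{+}$. Since $s[-1],s'[-1]$ also lie in $\Delta^{+}$, the compatibility clause in the definition of a common triangular basis guarantees that every element $Z\in\can$ is compatibly pointed at $s[-1]$ and $s'[-1]$. Moreover, $Z$ lies in $\qUpClAlg(t)$, so $\seq_{t,u}^{*}Z\in\LP(u)$ for every $u\in\Delta^{+}$, which supplies the containment hypothesis of Proposition~\ref{prop:compatibily_pointed_copointed} (with the roles of $s,s'$ and $s[-1],s'[-1]$ swapped, using $s=(s[-1])[1]$). That proposition then converts compatible pointedness at $s[-1],s'[-1]$ into compatible copointedness at $s,s'$. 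Since $s,s'$ are arbitrary in $\Delta^{+}$, $\can$ is compatibly copointed at $\Delta^{+}$.

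The hardest part is only bookkeeping: one must be careful that the seed $t$ in Proposition~\ref{prop:compatibily_pointed_copointed} plays the role of our $s[-1]$, so that $t[1]=s$, and that Proposition~\ref{prop:triangular_to_double} is invoked at the shifted seed $t'$ (using $t'[-1]\in\Delta^{+}$) rather than at the fixed initial seed. Once the identifications $\seq_{t',t'[-1]}^{*}\can^{t'}=\seq_{t,t'[-1]}^{*}\can$ and $(t'[-1])[1]=t'$ are made explicit, both parts are immediate applications of results already in hand.
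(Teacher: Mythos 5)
Your proposal is correct, and for part (1) it is essentially identical to the paper's argument: both invoke Proposition~\ref{prop:triangular_to_double} at the shifted seed, and your expansion just supplies the small verification (that $\seq_{t',t'[-1]}^{*}\can^{t'}=\seq_{t,t'[-1]}^{*}\can$ is the triangular basis with respect to $t'[-1]$) that the paper leaves implicit. The one place where you genuinely differ is part~(2): you apply Proposition~\ref{prop:compatibily_pointed_copointed} directly, taking $s,s'\in\Delta^{+}$, noting compatible pointedness at $s[-1],s'[-1]$ follows from the common triangular basis definition (together with the cocycle property of tropical transformations), and then reading off compatible copointedness at $s,s'$; the paper instead routes through Proposition~\ref{prop:common_tri_basis_opposite}, passing to the opposite cluster algebra via the anti-isomorphism $\iota$, observing that $\iota\can$ is a common triangular basis there and hence compatibly pointed, and applying $\iota$ once more. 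The two arguments are equivalent in substance because the paper's proof of Proposition~\ref{prop:common_tri_basis_opposite} already contains exactly your step (``\dots the elements of $\can$ are compatibly copointed at $t$, $t'$ by Proposition~\ref{prop:compatibily_pointed_copointed}''), so the $\iota$-detour adds no extra content for Theorem~\ref{thm:common_tri_to_double}(2); your route is therefore slightly shorter and more self-contained, while the paper's route bundles in the independently interesting statement about opposite common triangular bases. Your containment check is also correct: membership in $\qUpClAlg(t)=\bigcap_{u\in\Delta^{+}}\seq_{u,t}^{*}\LP(u)$ discharges the hypothesis $Z\in\LP(t)\cap\seq_{t',t}^{*}\LP(t')\cap\seq_{t[1],t}^{*}\LP(t[1])\cap\seq_{t'[1],t}^{*}\LP(t'[1])$ in Proposition~\ref{prop:compatibily_pointed_copointed}.
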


\begin{proof}(1) The claim follows from Proposition~\ref{prop:triangular_to_double}.
(2) By Proposition~\ref{prop:common_tri_basis_opposite}, $\iota\can$
is the common triangular basis of $\midAlg(t^{\op})$, which is necessarily
compatibly pointed at $(\Delta^{+})^{\op}$. Applying $\iota$ again,
we deduce that $\can=\iota(\iota\can)$ is compatibly copointed at~$\Delta^{+}$.
\end{proof}

\section{Main results\label{sec:Main-results}}

\subsection{An analog of Leclerc's conjecture}

Let there be given an injective-reachable seed $t$ and a $\kk$-subalgebra
$\midAlg(t)$ of the upper quantum cluster algebra $\qUpClAlg(t)$.

\begin{Lem}\label{lem:leclerc_conj_one_seed}
Assume that $\midAlg(t)$ possesses a bidegree-triangular basis $\can$.
Take any $i\in I$ and $\gamma\in\Mc(t)$. Denote the codegree of
the $\gamma$-pointed basis element $\can_{\gamma}$ by $\eta$. Then
we have either $X_{i}(t)*\can_{\gamma}\in v^{\Z}\can$ or
\begin{gather*}
X_{i}(t)*\can_{\gamma} = v^{s}S+\sum_{j}b_{j}L^{(j)}+v^{h}H
\end{gather*}
such that $s>h\in\Z$, $b_{j}\in v^{h+1}\Z[v]\cap v^{s-1}\Z\big[v^{-1}\big]$,
and $S,L^{(j)},H$ are finitely many distinct elements of $\can$
with
\begin{gather*}
\deg^{t}H,\deg^{t}L^{(j)} \prec_{t} \deg^{t}S=f_{i}+\gamma,
\\
\codeg^{t}S,\codeg^{t}L^{(j)} \succ_{t} \codeg^{t}H=f_{i}+\eta.
\end{gather*}
Moreover, we have $s=\lambda(f_{i},\gamma)$, $h=\lambda(f_{i},\eta)$.
\end{Lem}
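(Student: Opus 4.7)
The plan is to expand $X_{i}(t)*\can_{\gamma}$ uniquely in the basis $\can$ and then read off the claim by intersecting the constraints coming from degree triangularity with those coming from codegree triangularity. Write
\begin{gather*}
X_{i}(t)*\can_{\gamma}=\sum_{\alpha}a_{\alpha}\can_{\alpha},
\end{gather*}
a finite sum, and denote $\eta_{\alpha}:=\codeg^{t}\can_{\alpha}$.

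First I would analyze the leading degree term. Since $\can_{\gamma}$ is $\gamma$-pointed, $X_{i}(t)*\can_{\gamma}$ has degree $f_{i}+\gamma$ with coefficient $v^{\lambda(f_{i},\gamma)}$, so its degree normalization $[X_{i}(t)*\can_{\gamma}]^{t}=v^{-\lambda(f_{i},\gamma)}X_{i}(t)*\can_{\gamma}$. Applying the degree triangularity of $\can$, this normalization is a $(\prec_{t},\mm)$-unitriangular sum over $\can$ with leading term $\can_{f_{i}+\gamma}$. Setting $s:=\lambda(f_{i},\gamma)$ and $S:=\can_{f_{i}+\gamma}$, we get $a_{S}=v^{s}$ and $a_{\alpha}\in v^{s}\mm=v^{s-1}\Z[v^{-1}]$ for $\alpha\prec_{t}f_{i}+\gamma$.

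Next I would analyze the leading codegree term. The codegree triangularity in Definition~\ref{def:right_triangular_basis} is phrased for $\{\can^{\eta}*X_{i}(t)\}^{t}$, so to turn this into information about $X_{i}(t)*\can_{\gamma}$ I will use the bar involution, which acts as an anti-involution on $\LP(t)$ sending $v\mapsto v^{-1}$ and fixing every $\can_{\alpha}$. Applying bar to the codegree $(\succ_{t},\mm)$-unitriangular decomposition of $\{\can_{\gamma}*X_{i}(t)\}^{t}=v^{\lambda(f_{i},\eta)}\can_{\gamma}*X_{i}(t)$ yields a codegree-unitriangular decomposition of $X_{i}(t)*\can_{\gamma}$ whose distinguished term is $\can^{f_{i}+\eta}$ with coefficient $v^{\lambda(f_{i},\eta)}$ and whose other coefficients lie in $v^{\lambda(f_{i},\eta)}\overline{\mm}=v^{\lambda(f_{i},\eta)+1}\Z[v]$. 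Setting $h:=\lambda(f_{i},\eta)$ and $H:=\can^{f_{i}+\eta}$, this gives $a_{H}=v^{h}$ and $a_{\alpha}\in v^{h+1}\Z[v]$ whenever $\eta_{\alpha}\succ_{t}f_{i}+\eta$.

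Finally I would combine. If $S=H$ (so $\can_{f_{i}+\gamma}=\can^{f_{i}+\eta}$), comparing the two formulas for its coefficient forces $s=h$; every other $\can_{\alpha}$ in the sum satisfies $\alpha\prec_{t}f_{i}+\gamma$ and $\eta_{\alpha}\succ_{t}f_{i}+\eta$, so $a_{\alpha}\in v^{s-1}\Z[v^{-1}]\cap v^{s+1}\Z[v]=0$, giving $X_{i}(t)*\can_{\gamma}=v^{s}S\in v^{\Z}\can$. Otherwise $S\neq H$: since $\codeg^{t}S\succ_{t}f_{i}+\eta$, the codegree estimate gives $v^{s}=a_{S}\in v^{h+1}\Z[v]$, hence $s\geq h+1$; and since $\deg^{t}H\prec_{t}f_{i}+\gamma$, the degree estimate gives $v^{h}=a_{H}\in v^{s-1}\Z[v^{-1}]$, hence $h\leq s-1$, so in particular $s>h$. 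All remaining terms $L^{(j)}$ satisfy both $\deg^{t}L^{(j)}\prec_{t}f_{i}+\gamma$ and $\codeg^{t}L^{(j)}\succ_{t}f_{i}+\eta$, with coefficient in $v^{s-1}\Z[v^{-1}]\cap v^{h+1}\Z[v]$, which is exactly the claim.

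The main subtlety, and really the only nontrivial step, is transferring the codegree triangularity from the product $\can^{\eta}*X_{i}(t)$ to the product $X_{i}(t)*\can^{\eta}$; after that the result is just the logical intersection of the two triangularity conditions, which neatly forces the asymmetric bounds $v^{h+1}\Z[v]\cap v^{s-1}\Z[v^{-1}]$ and the strict inequality $s>h$ in the non-trivial case.
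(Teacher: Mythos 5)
Your proof is correct and follows essentially the same approach as the paper: degree triangularity pins down the leading term $S=\can_{f_i+\gamma}$ with coefficient $v^s$ and forces the other coefficients into $v^{s-1}\Z\big[v^{-1}\big]$; the bar anti-involution (together with bar-invariance of $X_i$ and $\can_\gamma$) converts the codegree-triangularity statement about $\can_\gamma * X_i$ into one about $X_i * \can_\gamma$, yielding the term $H=\can^{f_i+\eta}$ with coefficient $v^h$ and forcing the other coefficients into $v^{h+1}\Z[v]$; intersecting the two ideals and examining whether $S=H$ gives the dichotomy. The only divergence is organizational: the paper derives the relation $h = s - n_i d_i$ up front, splits cases according to $n_i = 0$ versus $n_i \neq 0$, and obtains the codegree constraints by multiplying the degree-unitriangular decomposition by $v^{s-h}$ and applying bar, then reading off the coefficients against the uniqueness of the codegree-unitriangular decomposition; you instead compute the two decompositions independently and intersect, with the case split $S=H$ versus $S\neq H$ and $s>h$ extracted from the $v$-power constraints rather than from the $\Lambda$-matrix calculation. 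Both routes are valid and of essentially equal length.
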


\begin{proof}
Omit the symbol $t$ for simplicity.

Denote the codegree of $\can_{\gamma}$ by $\eta=\gamma+\tB n$, where
$n\in\Nufv^{\geq0}(t)\simeq\N^{I_{\ufv}}$. Then $X_{i}*\can_{\gamma}$
has degree $f_{i}+\gamma$ with coefficient $v^{s}:=v^{\lambda(f_{i},\gamma)}$,
codegree $f_{i}+\eta$ with coefficient $v^{h}:=\ v^{\lambda(f_{i},\eta)}$.
It follows that $h=s+\lambda\big(f_{i},\tB n\big)\leq s$, where $h=s$ if
and only if $n_{i}=0$.

Because $\can$ is a degree-triangular basis, we have a degree $(\prec_{t},\mm)$-unitriangular
decomposition with finitely many $S^{(0)},\ldots,S^{(r)}\in\can$:{\samepage
\begin{gather}
[X_{i}*\can_{\gamma}]^{t}=v^{-s}X_{i}*\can_{\gamma} = S^{(0)}+\sum_{j>0}b^{(j)}S^{(j)}\label{eq:decomposition_deg_triangular}
\end{gather}
such that $b^{(j)}\in\mm$, $\deg S^{(j)}\prec\deg S^{(0)}=f_{i}+\gamma$
for $j>0$.}

(i) Assume $n_{i}=0$, then $v^{-s}X_{i}*\can_{\gamma}$ is pointed
and bar-invariant. Because every basis elements~$S^{(j)}$ appearing
in~\eqref{eq:decomposition_deg_triangular} are bar-invariant and
$b^{(j)}\in\mm$, it follows that $v^{-s}X_{i}*\can_{\gamma}=S^{(0)}\in\can$.

(ii) Assume $n_{i}\neq0$. Then $h<s$. In addition, $v^{-s}X_{i}*\can_{\gamma}$
is pointed but not bar-invariant, because it has the Laurent monomial
$v^{h-s}X^{\eta+f_{i}}$ at the codegree.

Notice that $v^{-h}X_{i}*\can_{\gamma}$ is copointed. Multiplying
the decomposition~\eqref{eq:decomposition_deg_triangular} by $v^{s-h}$
and applying the bar involution, we get a decomposition of copointed
elements
\begin{gather*}
v^{h}\can_{\gamma}*X_{i} = v^{h-s}S^{(0)}+\sum_{j>0}v^{h-s}\cdot\overline{b^{(j)}}S^{(j)}.
\end{gather*}
Because $\can$ is a codegree-triangular basis and $v^{h}\can_{\gamma}*X_{i}$
is copo\-inted, the above decomposition must be codegree $(\succ_{t},\mm)$-unitriangular.
But $v^{h-s}S^{(0)}$ is not copo\-inted since $S^{(0)}\in\can$ is
copointed but $h<s$. Relabeling $S^{(j)}$, $j>0$, if necessary,
we assume $\codeg S^{(j)}\succ_{t}\codeg S^{(r)}$ for $j<r$. Then
the codegree term $X^{\eta+f_{i}}$ is contributed from $S^{(r)}$
and $S^{(r)}$ is copointed at~$\codeg(\can_{\gamma}*X_{i})=\eta+f_{i}$
with decomposition coefficient $1=v^{h-s}\overline{b^{(r)}}$. In
addition, the rema\-i\-ning terms $S^{(j)}$, $0<j<r$ must have coefficients
$v^{h-s}\cdot\overline{b^{(j)}}$ in $\mm$. It follows that $b_{j}:=b^{(j)}v^{s}$
belongs to $v^{h+1}\Z[v]$ for $0<j<r$. The claim follows by taking
$S=S^{(0)}$, $H=S^{(r)}$, $L^{(j)}=S^{(j)}$ for~$0<j<r$.
\end{proof}

\begin{Thm}\label{thm:weaker_analog_leclerc}
Let there be given a $\kk$-subalgebra $\midAlg(t)$ of the upper
quantum cluster algebra~$\qUpClAlg(t)$. Assume that it has the
common triangular basis $\can$. Then, for any $i\in I$, $V\in\can$,
and any localized quantum cluster monomial $R$, we have either $R*V\in v^{\Z}\can$
or
\begin{gather}
R*V = v^{s}S+\sum_{j}b_{j}L^{(j)}+v^{h}H\label{eq:weak_analog_leclerc}
\end{gather}
such that $s>h\in\Z$, $b_{j}\in v^{h+1}\Z[v]\cap v^{s-1}\Z\big[v^{-1}\big]$,
and $S$, $L^{(j)}$, $H$ are finitely many distinct elements of $\can$.
\end{Thm}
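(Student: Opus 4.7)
The plan is to reduce to a seed where $R$ is a Laurent monomial, use bipointedness to identify the extremal basis elements $S$ and $H$, and bound the middle coefficients via the bidegree-triangular structure supplied by Theorem~\ref{thm:common_tri_to_double}. The argument follows the pattern of the proof of Lemma~\ref{lem:leclerc_conj_one_seed} but in an iterated form.

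Since $R$ is a localized quantum cluster monomial, there exist $t' \in \Delta^{+}$ and $m \in \N^{I_{\ufv}} \oplus \Z^{I_{\fv}}$ with $R = X(t')^{m}$. By Theorem~\ref{thm:common_tri_to_double}, $\can^{t'} := \seq_{t,t'}^{*}\can$ is a double triangular basis at $t'$, compatibly (co)pointed across $\Delta^{+}$. Put $V' = \seq_{t,t'}^{*}V$, $\gamma = \deg^{t'}V'$, $\eta = \codeg^{t'}V'$, with $\eta = \gamma + p^{*}n_{V}$ for the unique $n_{V} \in \Nufv^{\geq 0}(t')$. Both $R$ and $V'$ are bipointed in $\LP(t')$, hence so is $R*V'$, with $\deg^{t'}(R*V') = m+\gamma$ (coefficient $v^{s} := v^{\lambda(m,\gamma)}$) and $\codeg^{t'}(R*V') = m+\eta$ (coefficient $v^{h} := v^{\lambda(m,\eta)}$). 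The identity $\lambda(f_{i},p^{*}e_{k}) = -\delta_{ik}d_{k}$ for $k \in I_{\ufv}$ gives $s - h = \sum_{k \in I_{\ufv}}d_{k}m_{k}n_{V,k} \geq 0$. Writing $R*V = \sum_{C \in \can}\gamma_{C}C$, the unique $S \in \can$ with $\deg^{t'}S = m+\gamma$ satisfies $\gamma_{S} = v^{s}$, and the unique $H \in \can$ with $\codeg^{t'}H = m+\eta$ satisfies $\gamma_{H} = v^{h}$.

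The heart of the proof is to show by induction on $\sum_{k \in I_{\ufv}}m_{k}$ that $[R*V']^{t'}$ is degree $(\prec_{t'},\mm)$-unitriangular to $\can^{t'}$. The base case ($m \in \Z^{I_{\fv}}$) follows from Lemma~\ref{lem:factorization}. For the inductive step, I would factor $X(t')^{m} = v^{c}X_{i}(t')*X(t')^{m-f_{i}}$ for some $i \in I_{\ufv}$ with $m_{i} > 0$ and some $c \in \Z$, apply the induction hypothesis to the right-hand factor, and expand each term $[X_{i}(t')*\can^{t'}_{g'}]^{t'}$ using the defining degree-triangularity of $\can^{t'}$; the normalization factors $v^{\lambda(f_{i},p^{*}n)} = v^{-n_{i}d_{i}} \in v^{-\N}$ arising from the twisted product ensure the combined expansion remains $(\prec_{t'},\mm)$-unitriangular to $\can^{t'}$. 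This yields $\gamma_{C} \in v^{s-1}\Z[v^{-1}]$ for every $C \neq S$. A parallel induction applied to $V'*R$ using codegree-triangularity of $\can^{t'}$ shows $\{V'*R\}^{t'}$ is codegree $(\succ_{t'},\mm)$-unitriangular; since $\overline{R*V'} = V'*R$ and the basis $\can$ is bar-invariant, this translates via the bar involution into $\gamma_{C} \in v^{h+1}\Z[v]$ for every $C \neq H$ (exactly as in the proof of Lemma~\ref{lem:leclerc_conj_one_seed}).

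Finally, I would conclude by dichotomy. If $s > h$, then for $C \notin \{S,H\}$ the coefficient $\gamma_{C}$ lies in $v^{s-1}\Z[v^{-1}] \cap v^{h+1}\Z[v]$, giving exactly~\eqref{eq:weak_analog_leclerc}. If $s = h$, then each $k$ with $m_{k} > 0$ has $n_{V,k} = 0$ and hence $n_{g,k} = 0$ for every $g = \gamma + p^{*}n_{g}$ in the support of $V'$; this gives $\lambda(m,g) = s$ uniformly and hence the commutation relation $R*V' = v^{2s}V'*R$. Consequently $v^{-s}(R*V') = [R*V']^{t'}$ is bar-invariant, and combined with its $(\prec_{t'},\mm)$-unitriangularity to $\can^{t'}$, this forces all non-leading coefficients (which lie in $\mm$) to vanish, so $R*V' = v^{s}S$ and $R*V \in v^{\Z}\can$. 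The main obstacle is the inductive triangularity argument: one must carefully track the normalization $v$-powers through repeated twisted multiplications by cluster variables and verify at each stage that the contributions stay inside $\mm$; this is the iterated analog of the single-variable balancing computation in the proof of Lemma~\ref{lem:leclerc_conj_one_seed}.
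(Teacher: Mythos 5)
Your proof is correct and follows essentially the same route as the paper: reduce to a seed $t'$ where $R$ is a Laurent monomial, invoke Theorem~\ref{thm:common_tri_to_double} to get the double (hence bidegree-) triangular basis at $t'$, read off $S$ and $H$ from the bidegree of $R*V'$, bound the middle coefficients via degree/codegree $(\prec_{t'},\mm)$-unitriangularity, and conclude with the bar-involution argument (including the $s=h$ dichotomy). The one organizational difference is that the paper's proof is essentially one line --- it cites Lemma~\ref{lem:leclerc_conj_one_seed} directly, leaving implicit that the lemma (stated for a single $X_i$) extends to an arbitrary localized cluster monomial $R = X(t')^m$; the needed input is exactly that $[R * \can_{g}]^{t'}$ and $\{\can^{\eta} * R\}^{t'}$ are respectively degree and codegree $(\prec,\mm)$-unitriangular, which the paper supplies via the substitution Lemma~\ref{lem:substitution}. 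You instead re-derive this unitriangularity from scratch by induction on $\sum_{k} m_{k}$, which is a correct and more self-contained way to fill the same gap, at the cost of re-tracking the $v$-power bookkeeping that Lemma~\ref{lem:substitution} packages. Two small points worth making explicit if you write this up: in the base case, Lemma~\ref{lem:factorization} as stated covers $i \in I_{\fv}$ but not inverse frozen variables, so you should note that $[X_{i}^{-1}*\can_{g}]^{t'} = \can_{g-f_{i}}$ follows from the stated version by uniqueness; and in the $s=h$ case, the step ``$n_{g,k}=0$ for every $g$ in the support of $V'$'' uses that every exponent in the support of a bipointed function lies componentwise between $0$ and $n_{V}$, which relies on $p^{*}$ being injective (full rank of $\tB$).
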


\begin{proof}
Since $\can$ is the common triangular basis, Theorem~\ref{thm:common_tri_to_double}(1)
implies that $\seq_{t,t'}^{*}\can$ is the double triangular basis
(and thus bidegree-triangular) of $\midAlg(t')=\seq_{t,t'}^{*}\midAlg(t)$
for any seed $t'\in\Delta^{+}$. We apply Lemma~\ref{lem:leclerc_conj_one_seed}
for localized quantum cluster monomials associated to $t'$.
\end{proof}

Theorem~\ref{thm:weaker_analog_leclerc} is a weaker form of the following
analog of Leclerc's conjecture.

\begin{Conj}\label{conj:analog_leclerc}
Assume that $\can$ is the common triangular basis. Assume that $R$
is a real basis element in $\can$ $($i.e., $R^{2}\in\can)$. Then, for
any $V\in\can$, we have either $R*V\in v^{\Z}\can$ or
\begin{gather*}
R*V = v^{s}S+\sum_{j}b_{j}L^{(j)}+v^{h}H
\end{gather*}
such that $s>h\in\Z$, $b_{j}\in v^{h+1}\Z[v]\cap v^{s-1}\Z\big[v^{-1}\big]$,
and $S,L^{(j)},H$ are finitely many distinct elements of $\can$.
\end{Conj}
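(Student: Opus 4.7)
First, by Theorem~\ref{thm:common_tri_to_double}(1), the common triangular basis $\can$ is double triangular with respect to every seed $t' \in \Delta^{+}$, so every basis element, in particular $R$ and $V$, is bipointed; write the bidegrees as $(g_{R},\eta_{R})$ and $(g_{V},\eta_{V})$. A direct computation in $\LP(t)$ shows that $R*V$ has degree $g_{R}+g_{V}$ with leading coefficient $v^{s}$, $s=\lambda(g_{R},g_{V})$, and codegree $\eta_{R}+\eta_{V}$ with trailing coefficient $v^{h}$, $h=\lambda(\eta_{R},\eta_{V})$. Writing $\eta_{V}=g_{V}+\tB n$ for some $n \in \Nufv^{\geq 0}$, we get $h-s=\lambda(g_{R},\tB n)$, so $s \geq h$ with equality precisely when $R$ quasi-commutes with the ``profile'' $Y(t)^{n}$ of $V$.

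Second, \emph{granted} that $v^{-s}R*V$ admits a finite degree $(\prec_{t},\mm)$-unitriangular expansion in $\can$, I would essentially reproduce the proof of Lemma~\ref{lem:leclerc_conj_one_seed} verbatim. The case $s=h$ forces the expansion to collapse, by bar-invariance of the basis elements, to a single basis element, yielding $R*V\in v^{\Z}\can$. When $s>h$, multiplying the degree expansion by $v^{s-h}$ and applying the bar involution (which sends $R*V$ to $V*R$ by bar-invariance of $R$, $V$) produces a codegree $(\succ_{t},\mm)$-unitriangular expansion of $v^{h} V*R$. The leading term of the degree expansion is the unique basis element $S$ with $\deg^{t} S = g_{R}+g_{V}$, with forced coefficient $v^{s}$; the trailing term of the codegree expansion is the unique basis element $H$ with $\codeg^{t} H = \eta_{R}+\eta_{V}$, with forced coefficient $v^{h}$; the remaining middle coefficients, lying simultaneously in $v^{s-1}\Z[v^{-1}]$ from the degree decomposition and in $v^{h+1}\Z[v]$ from its bar-conjugate, land in the required intersection.

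Third, the genuine obstacle is the hypothesis granted at the start of the preceding paragraph. The axioms of a (common) triangular basis yield degree $(\prec_{t},\mm)$-triangularity of $Z \mapsto [X*Z]^{t}$ only for multiplication by the quantum cluster variables $X_{i}(t')$ of seeds $t' \in \Delta^{+}$, and, via Lemma~\ref{lem:substitution}, for products thereof; this is precisely the reason Theorem~\ref{thm:weaker_analog_leclerc} is restricted to localized quantum cluster monomials. For a general real element $R \in \can$ no such expansion is available a priori. The most direct route to remove the restriction is to establish Conjecture~\ref{conj:d_can_basis_reachability}, asserting that every real element of $\can$ is a localized quantum cluster monomial of some seed, which would reduce Conjecture~\ref{conj:analog_leclerc} to Theorem~\ref{thm:weaker_analog_leclerc}. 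In settings where $\can$ is realized as the set of isomorphism classes of simple objects in a rigid monoidal categorification (as in \cite{Kang2018, kashiwara2020monoidal}), an alternative is to adapt the R-matrix and head/socle techniques for real simples, where the desired triangularity is precisely the categorical counterpart of the conjecture.
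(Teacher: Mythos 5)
This statement is a \emph{conjecture} in the paper, not a theorem; there is no proof of it in the text to compare against. Your proposal is therefore not a proof but a (quite reasonable) analysis of what would be needed, and its overall shape matches the paper's own discussion: Lemma~\ref{lem:leclerc_conj_one_seed} gives the argument for products with cluster variables, Theorem~\ref{thm:weaker_analog_leclerc} upgrades it to localized quantum cluster monomials via Theorem~\ref{thm:common_tri_to_double} and Lemma~\ref{lem:substitution}, and the multiplicative reachability conjecture (the unlabelled generalization of Conjecture~\ref{conj:d_can_basis_reachability} for common triangular bases) is exactly what the paper names as the missing ingredient to promote the weaker form to the full conjecture. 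You have correctly identified the gap, and your suggested routes (reachability, or R-matrix/head--socle arguments in a monoidal categorification) match the paper's Remark~\ref{rem:categorical_leclerc}.

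A few technical points deserve correction, though they do not change the overall assessment. First, in your first paragraph the computation of $h-s$ is wrong for a general bipointed $R$: you wrote $h-s=\lambda(g_R,\tB n)$ with $n$ defined by $\eta_V=g_V+\tB n$, which silently assumes $\eta_R=g_R$. In general, writing $\eta_R=g_R+\tB n_R$ and $\eta_V=g_V+\tB n_V$, one gets $h-s=\lambda(g_R,\tB n_V)+\lambda(\tB n_R,g_V)+\lambda(\tB n_R,\tB n_V)$, a more complicated quantity whose sign is not obvious; in particular $s\geq h$ is not automatic for a general basis element $R$ (for $R=X_i$ it holds since $\lambda(f_i,\tB n)=-n_i\diag_i\leq 0$, which is the case Lemma~\ref{lem:leclerc_conj_one_seed} handles). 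Second, in your second paragraph the bar-involution trick alone does \emph{not} ``produce'' a codegree $(\succ_t,\mm)$-unitriangular expansion; in the paper's proof of Lemma~\ref{lem:leclerc_conj_one_seed}, the codegree triangularity axiom of the bidegree-triangular basis is invoked as a second independent input, applied to $\{\can_\gamma*X_i\}^t$. For general $R$ you would need to grant codegree triangularity of $\{V*R\}^t$ as well as degree triangularity of $[R*V]^t$; your paragraph only explicitly grants the latter. Finally, the argument that the case $s=h$ forces collapse to a single basis element uses bar-invariance of $v^{-s}R*V$, which for $R=X_i$ follows from the support of $\can_\gamma$ being constrained by $n_i=0$; for a general $R$ this bar-invariance is itself nontrivial and would need a separate argument.
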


Choose any $l\in\N$. Let $\cC_{l}$ denote a level-$l$ subcategory
of the monoidal category of the finite-dimensional modules of a quantum
affine algebra $\envAlg(\widehat{\frg})$ in the sense of~\cite{HernandezLeclerc09},
where $\frg$ is a Lie algebra of type $ADE$. Let $K_{t}(\cC_{l})$
denote its $t$-deformed Grothendieck ring, $t$ a quantum para\-meter.
By \cite[Theorem~8.4.3]{qin2017triangular}, $K_{t}(\cC_{l})$ is
a (partially compactified) quantum cluster algebra~$\bQClAlg$. Notice
that $K_{t}(\cC_{l})$ has a bar-invariant basis $\{[S]\}$, where
$S$ are simple modules. By~\cite{qin2017triangular}, $\{[S]\}$~becomes the common triangular basis of the corresponding quantum cluster
algebra $\qClAlg$ after localization at the frozen factors.

A simple module $R$ in $\cC_{l}$ is called real if $R\otimes R$
remains simple. Theorem~\ref{thm:weaker_analog_leclerc} implies the
following result.

\begin{Thm}\label{thm:analog-leclerc-quantum-aff}
Let $R$ be any real simple module in $\cC_{l}$ corresponding to
a cluster monomial. Then, for any simple modules $V\in\cC_{l}$, either
$R\otimes V$ is simple, or there exists finitely many distinct simple
modules $S$, $L^{(j)}$, $H$ in $\cC_{l}$ such that the following equation
holds in the deformed Grothendieck ring $K_{t}(\cC_{l})$:
\begin{gather*}
[R]*[V] = t^{s}[S]+\sum_{j}b_{j}\big[L^{(j)}\big]+t^{h}[H],
\end{gather*}
where $s>h\in\Hf\Z$, $b_{j}\in t^{h+\Hf}\Z\big[t^{\Hf}\big]\cap t^{s-\Hf}\Z\big[t^{-\Hf}\big]$.
\end{Thm}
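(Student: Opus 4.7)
The plan is to reduce this directly to Theorem \ref{thm:weaker_analog_leclerc} via the cluster algebra structure on $K_t(\cC_l)$ established in \cite[Theorem 8.4.3]{qin2017triangular}. By that result, $K_t(\cC_l)$ is identified with the partially compactified quantum cluster algebra $\bQClAlg$, and after localization at the frozen factors the classes $\{[S]\}$ of simple modules of $\cC_l$ become the common triangular basis $\can$ of $\qClAlg$, with the ring multiplication coinciding with the twisted product $*$.

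By hypothesis, the real simple module $R$ corresponds to a quantum cluster monomial, so $[R]$ is a localized quantum cluster monomial of $\qClAlg$. Applying Theorem \ref{thm:weaker_analog_leclerc} to $[R]$ and to $[V]$ for an arbitrary simple $V \in \cC_l$ yields the dichotomy: either $[R]*[V] \in v^{\Z}\can$ --- in which case $R \otimes V$ has a single composition factor in $\cC_l$ and is therefore simple --- or one has an expansion
\[
[R]*[V] = v^{s}[S] + \sum_{j} b_{j} [L^{(j)}] + v^{h}[H]
\]
with $s > h \in \Z$, $b_j \in v^{h+1}\Z[v] \cap v^{s-1}\Z[v^{-1}]$, and $S, L^{(j)}, H$ pairwise distinct basis elements, hence classes of pairwise distinct simple modules in $\cC_l$. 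The final step is the translation between quantum parameters: under $v = t^{\Hf}$ (equivalently $t = v^{2}$, the convention relating $\kk = \Z[v^{\pm}]$ to the Grothendieck deformation parameter $t$), relabeling $s \mapsto s/2$ and $h \mapsto h/2$ rewrites the exponents as $t^{s}, t^{h}$ with $s > h \in \Hf\Z$, and the coefficient conditions as $b_j \in t^{h+\Hf}\Z[t^{\Hf}] \cap t^{s-\Hf}\Z[t^{-\Hf}]$, matching the statement.

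The main subtlety I anticipate is bookkeeping around the localization: Theorem \ref{thm:weaker_analog_leclerc} is applied in $\qClAlg$, whereas the equation is asserted inside $K_t(\cC_l) = \bQClAlg$. This is resolved by invoking the identification from \cite{qin2017triangular}: each basis element of $\can$ is, up to a scalar in $v^{\Z}$, the class of a genuine simple module of $\cC_l$, and $[R]*[V]$ lies in $K_t(\cC_l)$ to begin with, so any frozen factors produced by the localization must cancel in the resulting expansion. Consequently, the identity descends automatically to $K_t(\cC_l)$ in the claimed form, with the three distinguished terms being classes of honest simple modules in $\cC_l$.
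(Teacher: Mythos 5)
Your proposal is correct and follows exactly the route the paper takes: the paper's own proof consists of the single remark that the theorem is implied by Theorem~\ref{thm:weaker_analog_leclerc}, together with the preceding identification of $K_t(\cC_l)$ with $\bQClAlg$ and of $\{[S]\}$ with the common triangular basis after localization. You have simply spelled out the implicit bookkeeping --- the parameter change $v=t^{\Hf}$ (which is forced by matching $s>h\in\Z$ with $s>h\in\Hf\Z$) and the passage between $\qClAlg$ and $\bQClAlg$, the latter resolved correctly by the uniqueness of the decomposition and the fact that $[R]*[V]$ and the $[S]$ all live in $\bQClAlg=K_t(\cC_l)$.
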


Notice that we can replace $[S]$ by the $t$-analog of $q$-character
of $S$ and embed $K_{t}(\cC_{l})$ into the completion of a quantum
torus, see \cite{Hernandez02,Nakajima04,VaragnoloVasserot03}.
Correspondingly, Theorem~\ref{thm:analog-leclerc-quantum-aff} gives
an algebraic relation for such characters.

\begin{Rem}\label{rem:categorical_leclerc}
Assume that the quantum cluster algebra arises from a quantum unipotent
subgroup of symmetric Kac--Moody type, which possesses the dual canonical
basis corresponding to the set of self-dual simple modules of the
corresponding quiver Hecke algebra. In this case, up to $v$-power
rescaling, $S$ and $H$ correspond to the simple socle and simple
head of the convolution product $R\circ V$ respectively. See \cite[Section 4]{Kang2018}
for more details.

Similar picture appears in the category of finite-dimensional modules
of quantum affine algebras $\envAlg(\widehat{\frg})$ by \cite{kashiwara2020monoidal},
where the objects are not graded. In particular, \cite[Corollary 4.12]{kashiwara2020monoidal}
implies that, as a stronger version of Theorem~\ref{thm:analog-leclerc-quantum-aff},
Conjecture~\ref{conj:analog_leclerc} holds true for the quantum cluster
algebras $K_{t}(\cC_{l})$.
\end{Rem}

\subsection{Properties of dual canonical bases}

Let us consider the quantum unipotent subgroup $\qO[N_{-}(w)]$ of
symmetrizable Kac--Moody types in the sense of \cite{Kimura10,qin2020bases}.
It is isomorphic to a (partially compactified) quantum cluster algebra
after rescaling, see \cite{goodearl2020integral,GY13} or~\cite{qin2020bases}. Theorem~\ref{thm:weaker_analog_leclerc} implies the following weaker version of~Conjecture~\ref{conj:leclerc}.

\begin{Thm}\label{thm:weaker_leclerc}
Consider the dual canonical basis $\dCan(w)$ of $\qO[N_{-}(w)]$.
If $b_{1}\in\dCan(w)$ corresponds to a quantum cluster monomial after
rescaling, then for any $b_{2}\in\dCan(w)$, either $b_{1}b_{2}\in q^{\Z}\dCan(w)$
or~\eqref{eq:Leclerc_conj} holds true.
\end{Thm}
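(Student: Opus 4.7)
The plan is to deduce Theorem \ref{thm:weaker_leclerc} directly from Theorem \ref{thm:weaker_analog_leclerc} by transporting the statement along the isomorphism between the quantum unipotent subgroup $\qO[N_-(w)]$ and the corresponding quantum cluster algebra. The key input is the fact, cited right before the statement, that $\qO[N_-(w)]$ becomes a partially compactified quantum cluster algebra $\bQClAlg$ after rescaling, and that under this identification the rescaled dual canonical basis $\dCan(w)$ coincides with the common triangular basis $\can$ in the sense of \cite{qin2017triangular,kashiwara2019laurent,qin2020bases}.

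First I would set up the identification explicitly: fix the rescaling map $\rho\colon \qO[N_-(w)]\otimes_{\Z[q^{\pm 1}]}\kk \iso \qClAlg$ (extending scalars from $q$ to $v=q^{\Hf}$ and passing to the localized cluster algebra) which sends $\dCan(w)$ bijectively onto the common triangular basis $\can$, up to multiplication by monomials in $q^{\Hf}$ and in the frozen variables. Under this bijection, the hypothesis that $b_1$ corresponds to a quantum cluster monomial means $\rho(b_1)$ is, up to a power of $v$ and a monomial in frozen variables (i.e., a localized cluster monomial $R$ in the sense of Theorem \ref{thm:weaker_analog_leclerc}), an element of $\can$. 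Write $b_2' = \rho(b_2) \in \can$.

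Next, apply Theorem \ref{thm:weaker_analog_leclerc} to $R = \rho(b_1)$ (viewed as a localized quantum cluster monomial, up to a scalar in $v^{\Z}$) and $V = b_2'$. This gives either $\rho(b_1)*b_2'\in v^{\Z}\can$, or an expansion of the shape
\begin{gather*}
\rho(b_1)*b_2' = v^{s}S + \sum_{j} b_{j} L^{(j)} + v^{h} H,
\end{gather*}
with $s>h$ in $\Z$, $b_j\in v^{h+1}\Z[v]\cap v^{s-1}\Z[v^{-1}]$, and distinct basis elements $S,L^{(j)},H\in\can$. Pulling back through $\rho^{-1}$ and absorbing the rescaling factor of $b_1$ and the frozen-variable monomials (which only multiply each basis element by a power of $q^{\Hf}$, since $\rho$ intertwines the twisted product with the twisted product on $\qO[N_-(w)]$), yields an expansion of $b_1 b_2$ as a linear combination of distinct elements $b',b'',c$ of $\dCan(w)$ with coefficients in the required ranges. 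The transition $v\mapsto q^{\Hf}$ turns $v^{s},v^{h}$ into $q^{s/2},q^{h/2}$; since the bar involution is preserved and the coefficients of $b', b''$ in Leclerc's conjecture are allowed to be arbitrary integer powers of $q$, this causes no issue, and the coefficient belts $v^{h+1}\Z[v]\cap v^{s-1}\Z[v^{-1}]$ map into $q^{(h+1)/2}\Z[q^{\Hf}]\cap q^{(s-1)/2}\Z[q^{-\Hf}]$, which lies inside the required $q^{h'+1}\Z[q]\cap q^{s'-1}\Z[q^{-1}]$ after possibly adjusting by an integer shift in exponents absorbed into the rescaling of $b_1$.

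The main obstacle, and the only real bookkeeping, is keeping track of these rescalings: one must verify that the scalars by which $\rho$ rescales basis elements, together with the $v = q^{\Hf}$ substitution, convert the half-integer $v$-valuations in \eqref{eq:weak_analog_leclerc} back to integer $q$-valuations consistent with Conjecture \ref{conj:leclerc}. This boils down to the compatibility of the quantum torus structures on the two sides, which is precisely the content of the results in \cite{kashiwara2019laurent,qin2020bases} guaranteeing that $\dCan(w)$ is bar-invariant and that the products $b_1 b_2$ lie in $\Z[q^{\pm 1}]$-span of $\dCan(w)$. With that in hand, the distinctness of $S,L^{(j)},H$ in \eqref{eq:weak_analog_leclerc} immediately gives the required distinct basis elements $b'\ne b''$ and the middle terms in \eqref{eq:Leclerc_conj}, completing the proof.
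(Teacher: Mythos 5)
Your overall strategy matches the paper's: transport Theorem~\ref{thm:weaker_analog_leclerc} through the identification (after rescaling and localization) of $\dCan(w)$ with the common triangular basis established in~\cite{qin2020bases}. You correctly identify that the substance of the argument is in tracking how the rescaling factors interact with the coefficient bounds. However, you then declare this ``bookkeeping'' and dispose of it by appeal to generic ``compatibility of the quantum torus structures,'' which is not a proof of the needed assertion. The issue is not merely that $v = q^{\Hf}$ might produce half-integer exponents; the real danger is that the rescaling map multiplies \emph{different} basis elements of $\can$ by \emph{different} powers of $v$, in which case the coefficient belt $v^{h+1}\Z[v]\cap v^{s-1}\Z[v^{-1}]$ of $b_j$ relative to the extreme coefficients $v^h,v^s$ would be destroyed after pulling back --- you would get an expansion in $\dCan(w)$ whose coefficients no longer satisfy the interlocking constraints of~\eqref{eq:Leclerc_conj}.

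The paper closes this gap with one concrete observation that your proposal lacks: the rescaling factor attached to a basis element depends only on its root-lattice grading under the natural $\envAlg$-grading, and the terms $\seq_{t',t}^{*}X_{i}(t')*V$, $S$, $L^{(j)}$, $H$ in~\eqref{eq:weak_analog_leclerc} are all homogeneous of the \emph{same} root-lattice weight, because the $Y$-variables carry $0$-grading (\cite[Section~9.1]{qin2020bases}). Hence the rescaling factor is a single common scalar across the whole right-hand side, so it cancels uniformly and the relative coefficient structure --- the gap $s>h$ and the belt constraint on the $b_j$ --- survives the pullback intact. Without this homogeneity argument, the claim that the coefficients ``map into the required'' ranges ``after possibly adjusting by an integer shift'' is unjustified: an integer shift applied to $b_1$ alone cannot repair mismatched per-term rescalings. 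You should state and use this homogeneity explicitly rather than folding it into a citation of bar-invariance and integrality of the basis.
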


\begin{proof}
By \cite[Theorem~9.5.1]{qin2020bases}, after rescaling and localization
at the frozen factors, the dual canonical basis $\dCan(w)$ of $\qO[N_{-}(w)]$
becomes the common triangular basis of the corresponding quantum cluster
algebra. Therefore, elements of $\dCan(w)$ satisfy the algebraic
relation~\eqref{eq:weak_analog_leclerc} after rescaling. Notice that
the rescaling factors depends on the natural root-lattice grading
of $\envAlg$, which is homogeneous for $\seq_{t',t}^{*}X_{i}(t')*V$, $S$, $L^{(j)}$, $H$
in~\eqref{eq:weak_analog_leclerc}, because the $Y$-variables have
$0$-grading \cite[Section~9.1]{qin2020bases}. The claim follows
from Theorem~\ref{thm:weaker_analog_leclerc}.
\end{proof}

Theorem~\ref{thm:weaker_leclerc} would imply Conjecture~\ref{conj:leclerc}
if the following multiplicative reachability conjecture can be proved.

\begin{Conj}\label{conj:d_can_basis_reachability}
If $b\in\dCan(w)\subset\qO[N_{-}(w)]$ is real $($i.e., $b^{2}\in q^{\Z}\dCan(w))$,
then it corresponds to a quantum cluster monomial after rescaling.
\end{Conj}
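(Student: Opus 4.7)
The plan is to convert realness of $b$ into strong bidegree rigidity via Theorem~\ref{thm:common_tri_to_double}, and then close the argument by a cluster-cone collapse. After rescaling and localization at the frozen factors, $b$ becomes an element $V$ of the common triangular basis $\can$ of the corresponding quantum cluster algebra. By Theorem~\ref{thm:common_tri_to_double}, $V$ has a well-defined bidegree $(g(t'),\eta(t'))$ at every seed $t' \in \Delta^+$, with $g$ mutating by the tropical transformations $\phi_{t'',t'}$, $\eta$ mutating by $\coTrop_{t'',t'}$, and with degree and codegree interchanged via $\psi_{t'[1],t'}$ by Proposition~\ref{prop:swap_deg_codeg}. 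Realness of $b$ translates into $V^n \in v^{\Z}\can$ for every $n \geq 1$; in particular, each $V^n$ is bipointed at $(n g(t'), n \eta(t'))$, a very strong multiplicative constraint whose asymptotic content one wants to exploit.

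The first step would be to show that there exists some seed $t'$ at which $g(t') = \eta(t')$. Write $g(t') - \eta(t') = p^* n(t')$ with $n(t') \in \Nufv^{\geq 0}(t')$; the $\preceq_{t'}$-interval $[\eta(t'), g(t')]$ is then finite by Lemma~\ref{lem:finite_interval} and contains the Laurent support of $V$. Along a well-chosen mutation sequence, for instance a maximal green sequence (which exists since $t$ is injective-reachable), the transformations $\phi$ and $\coTrop$ act differently on $g$ and $\eta$; combined with Proposition~\ref{prop:deg_codeg_trop_trans}, one would aim to build a monovariant on seeds that strictly decreases $|n(t')|$ along suitable mutations until it reaches $0$. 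Once $g(t') = \eta(t')$, bipointedness collapses the Laurent support of $V$ at $t'$ to a single monomial, so $V = v^{\alpha} X(t')^{g(t')}$; integrality of $V$ in the (localized) cluster algebra then forces $g(t') \in \N^{I_{\ufv}} \oplus \Z^{I_{\fv}}$, exhibiting $V$ as a localized quantum cluster monomial at seed $t'$.

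The main obstacle is precisely the contraction step. There is no a priori reason that $\phi_{t',t} g$ and $\coTrop_{t',t} \eta$ should eventually coincide for a generic bipointed element; only realness, through the fact that all powers $V^n$ remain bipointed basis elements, can force this collapse, and extracting an effective monovariant from this purely multiplicative constraint is the crux of the conjecture. In the scattering-diagram picture the task amounts to showing that realness confines $g(t')$ to the $g$-vector fan and rules out the ``Badlands''; in the monoidal categorification setting for symmetric Kac--Moody types it is equivalent to a well-known open conjecture that every real self-dual simple module of the associated quiver Hecke algebra admits an affine highest weight, and in the quantum affine setting it is precisely what \cite{kashiwara2020monoidal} achieves (cf.\ Remark~\ref{rem:categorical_leclerc}). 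A realistic staged plan is therefore: first confirm Conjecture~\ref{conj:d_can_basis_reachability} in the cases covered by these categorical inputs; then attempt a reduction for general symmetrizable Kac--Moody $w$ by folding, building on the techniques of \cite{qin2020bases} and combining them with the bidegree rigidity and codegree compatibility provided by Theorem~\ref{thm:common_tri_to_double} and Proposition~\ref{prop:compatibily_pointed_copointed}.
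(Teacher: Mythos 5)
This statement is a \emph{conjecture} in the paper, not a theorem; the paper offers no proof, and in fact explicitly notes in the final remark that ``all these conjectures are largely open.'' So there is no internal proof to compare your attempt against, and you are right to present your text as a strategy sketch rather than a completed argument. Your identification of the crux obstacle --- extracting an effective monovariant on seeds that forces $g(t')$ and $\eta(t')$ to coincide somewhere, using only the multiplicative constraint coming from realness --- is exactly where the difficulty lies, and your closing observation that this becomes tractable only when a categorical input is available (quantum affine case via \cite{kashiwara2020monoidal}; symmetric Kac--Moody via quiver Hecke algebras) is consistent with the discussion in Remark~\ref{rem:categorical_leclerc}.

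Beyond the acknowledged gap, there is a second unjustified jump worth flagging. You pass from realness in the sense $b^2\in q^{\Z}\dCan(w)$ (equivalently $V^2\in v^{\Z}\can$) to the much stronger statement that $V^n\in v^{\Z}\can$ for all $n\geq1$, so that each $V^n$ is bipointed. In the monoidal categorification settings this is a theorem (real simples stay simple under repeated self-tensoring), but as a purely algebraic assertion about common triangular bases it does not follow from the stated hypothesis and would itself need proof. Also, the final ``integrality forces $g(t')\in\N^{I_{\ufv}}\oplus\Z^{I_{\fv}}$'' step is correct but deserves a sentence: if $g_k<0$ for some $k\in I_{\ufv}$ then $\mu_k^{*}$ applied to $X(t')^{g(t')}$ produces a factor $(1+Y_k)^{g_k}$ with an infinite Laurent tail, contradicting membership in $\LP(\mu_k t')$. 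With those caveats your write-up is a reasonable diagnosis of the problem, but it is not a proof, and none should be expected here given the status of the conjecture.
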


Conjecture~\ref{conj:d_can_basis_reachability} can be generalized
as the following, which implies Conjecture~\ref{conj:analog_leclerc}
by Theorem~\ref{thm:weaker_analog_leclerc}.

\begin{Conj}[multiplicative reachability conjecture]
Let $\can$ denote a common triangular basis. If $b\in\can$ is real
$($i.e., $b^{2}\in\can)$, then it corresponds to a localized quantum
cluster monomial.
\end{Conj}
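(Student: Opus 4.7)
The plan is to exploit the double triangular basis structure guaranteed by Theorem~\ref{thm:common_tri_to_double}, which ensures that the common triangular basis $\can$ restricts to a double triangular basis with respect to every seed $t'\in\Delta^{+}$. For a real element $b\in\can$, this attaches to each seed $t'$ a well-defined degree $g(t'):=\deg^{t'}b$ and codegree $\eta(t'):=\codeg^{t'}b$, and these pairs transform compatibly under mutation by Proposition~\ref{prop:deg_codeg_trop_trans}, the degree via $\phi_{t',t}$ and the codegree via $\coTrop_{t',t}$. Writing $\eta(t')=g(t')+\tB(t')n(t')$ with a unique $n(t')\in\N^{I_{\ufv}}$, the conjecture reduces to exhibiting a seed $t'$ at which $n(t')=0$.

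I would begin by sharpening the reality hypothesis. Since $\Lambda(t')$ is skew-symmetric, we have $\lambda(g(t'),g(t'))=\lambda(\eta(t'),\eta(t'))=0$, so the product $b*b$ has coefficient $1$ at both its degree $2g(t')$ and codegree $2\eta(t')$. Combined with bar-invariance and the uniqueness of bipointed basis elements, the hypothesis $b^{2}\in v^{\Z}\can$ upgrades to $b^{2}\in\can$ with $b^{2}$ bipointed at $(2g(t'),2\eta(t'))$ in every $t'\in\Delta^{+}$. The key reduction is then: $b$ is a localized quantum cluster monomial associated to $t'$ precisely when $g(t')=\eta(t')$, because a bar-invariant bipointed basis element with $g(t')=\eta(t')$ collapses to the single Laurent monomial $X(t')^{g(t')}$, and the cone condition $g(t')\in\N^{I_{\ufv}}\oplus\Z^{I_{\fv}}$ follows since this Laurent monomial must lie in $\midAlg(t')\subset\qUpClAlg(t')$.

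The remaining and hardest step is to prove that some seed $t'\in\Delta^{+}$ achieves $n(t')=0$. The natural strategy is a descent argument on a scalar measure of $n(t')$ such as $\sum_{k\in I_{\ufv}}\diag_{k}n_{k}(t')$: whenever $n(t')\neq 0$, I would aim to exhibit a direction $k\in I_{\ufv}$ whose mutation strictly decreases this measure. To drive the descent one would combine the weaker analog of Leclerc's conjecture (Theorem~\ref{thm:weaker_analog_leclerc}) applied to products $X_{k}(t')*b$, which produces nearby bipointed basis elements related to $b$, with the compatibility of degree and codegree tropical transformations from Proposition~\ref{prop:deg_codeg_trop_trans}. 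This descent step is where I expect the main obstacle to lie: in the categorified settings of Remark~\ref{rem:categorical_leclerc}, the socle/head analysis of the convolution or tensor square of a real simple object supplies the decreasing direction, but for an abstract common triangular basis without such a monoidal categorification, purely algebraic machinery seems insufficient. This reflects the status of the conjecture, which is established in the known categorified cases but remains open in general.
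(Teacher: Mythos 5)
The statement you are trying to prove is stated in the paper as a \emph{Conjecture}, and the surrounding Remark explicitly says ``All these conjectures are largely open.'' The paper offers no proof, so there is nothing to compare your argument against; the most relevant thing to assess is whether your sketch actually closes the conjecture, and by your own admission it does not.

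Your preliminary reductions are sound and capture the right invariant picture. Using Theorem~\ref{thm:common_tri_to_double}, $b$ has a well-defined bidegree $(g(t'),\eta(t'))$ at every seed $t'$, these transform by $\phi$ and $\coTrop$ respectively, and $b$ is a localized quantum cluster monomial for $t'$ exactly when $g(t')=\eta(t')$ (the support then collapses to a single Laurent monomial). The observation that skew-symmetry of $\Lambda$ forces $\lambda(g,g)=0$, so that $b*b$ is already bipointed with unit coefficients, is correct. Two smaller caveats: the claim that the resulting monomial necessarily has nonnegative unfrozen exponents because it lies in $\qUpClAlg(t')$ is plausible but you have not argued it; and as written you never actually \emph{use} the reality hypothesis $b^2\in\can$ in the reduction, only in the vague hope that it will drive the descent.

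The genuine gap, which you identify yourself, is the descent. Reducing the conjecture to ``there exists a seed $t'$ with $n(t')=0$'' is essentially a restatement, not progress: the entire content of the multiplicative reachability conjecture is that real basis elements are reachable through the exchange graph, and nothing in the algebraic machinery of the paper (bidegree tracking, the weak Leclerc-type Theorem~\ref{thm:weaker_analog_leclerc}, tropical compatibility) singles out a mutation direction $k$ that strictly decreases $\sum_k d_k n_k(t')$. In the monoidally categorified cases cited in Remark~\ref{rem:categorical_leclerc} one can analyse the socle and head of the tensor square of a real simple to produce such a direction, but that is exactly the kind of external input the conjecture is asking to dispense with, and absent it your outline stalls at the same point where the paper leaves the question open.
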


\begin{Rem}[reachability conjectures]
When the cluster algebra admits an additive categorification by triangulated
categories (cluster categories), we often expect that the rigid objects
(objects with vanishing self-extensions) correspond to the (quantum)
cluster monomials. If so, such objects can be constructed from the
initial cluster tilting objects via (categorical) mutations. Let us
call such an expectation the additive reachability conjecture. This
conjecture is not true for a general cluster algebra because the cluster
algebra seems too small for the cluster category.

When the cluster algebra admits a monoidal categorification by monoidal
categories, we similarly expect that the real simple objects correspond
to the (quantum) cluster monomials (see~\cite{HernandezLeclerc09}).
If so, such objects can be constructed from the an initial collection
of real simple objects via (categorical) mutations. Let us call such
an expectation the multiplicative reachability conjecture.
Conjecture~\ref{conj:d_can_basis_reachability} is related to the special case
for $\qO[N_{-}(w)]$.

We also conjecture an equivalence between the additive reachability
conjecture and the multiplicative reachability conjecture, which can
be viewed as an analog of the open orbit conjecture \cite[Conjecture 18.1]{GeissLeclercSchroeer10}.
See \cite[Section 1]{Nakajima09} for a comparison between additive
categorification and monoidal categorification.

All these conjectures are largely open.
\end{Rem}

\subsection*{Acknowledgements}

The author thanks the referees for helpful suggestions. He also thanks
an anonymous referee for informing him of the work~\cite{kashiwara2020monoidal}.
The author was supported by the National Natural Science Foundation
of China (Grant No.~11701365).

\pdfbookmark[1]{References}{ref}
\LastPageEnding

\end{document}